\documentclass[10pt]{amsart}
\pdfoutput=1
\PassOptionsToPackage{numbers,square}{natbib}
\usepackage{aiproofs}
\usepackage{needspace}

\newcommand{\reals}{\mathbb{R}}

\newcommand{\Cc}{\mathcal{C}}
\newcommand{\Dc}{\mathcal{D}}

\def\sint{\begingroup\textstyle\int\endgroup} 

\DeclareMathOperator{\trace}{tr}
\DeclareMathOperator{\diag}{diag}

\newtheorem{theorem}{Theorem}[section]

\newtheorem{prop}[theorem]{Proposition}
\newtheorem{lemma}[theorem]{Lemma}

\theoremstyle{definition}

\definecolor{cdarkred}{RGB}{180,70,30} 
\definecolor{cdarkgreen}{RGB}{0,130,0} 
\definecolor{azure}{RGB}{240,255,255}
\definecolor{cmidgreen}{RGB}{201,219,216}
\definecolor{clightgreen}{RGB}{200,255,200}
\definecolor{clightgray}{RGB}{240,240,240}
\definecolor{cmygreen}{RGB}{10,120,0}
\definecolor{cmyblue}{RGB}{50,50,255}
\definecolor{sblue}{RGB}{100,100,255}
\definecolor{cmypurple}{RGB}{150,50,180}
\definecolor{cmyred}{RGB}{238, 64,  0}
\definecolor{cdarkblue}{RGB}{30,75,170}
\definecolor{csharpred}{RGB}{255,0,0}

\newtcolorbox{dashedbox}[1][]{
  nobeforeafter,
  colback=yellow!10!white,
  boxrule=0pt,
  enhanced jigsaw,
  top=2pt,
  left=2pt,
  right=2pt,
  bottom=0pt,
  borderline horizontal={0.5pt}{0pt}{dashed},
  borderline vertical={0.5pt}{0pt}{dashed},
  #1
}

\newtcbox{\tcbmathbox}[1][]{
  nobeforeafter,
  colback=yellow!30!white,
  boxrule=0pt,
  enhanced jigsaw,
  top=0pt,
  left=1pt,
  right=1pt,
  bottom=0pt,
  borderline horizontal={0.5pt}{0pt}{dashed},
  borderline vertical={0.5pt}{0pt}{dashed},
  #1
}

\newcommand{\Ec}{\mathcal E}
\newcommand{\Tc}{\mathcal T}
\newcommand{\eps}{\varepsilon}

\newcommand{\tp}{\mathsf T}
\newcommand{\one}{\mathbf 1}
\theoremstyle{definition}
\newtheorem{algo}[theorem]{Algorithm}
\newcommand{\repourl}{\texttt{https://github.com/suvrit/komlos}}
\AtBeginEnvironment{thebibliography}{\small}
\numberwithin{table}{section}
\makeatletter
\g@addto@macro\normalsize{%
  \abovedisplayskip=6pt plus 2pt minus 1pt
  \belowdisplayskip=\abovedisplayskip
  \abovedisplayshortskip=3pt plus 1pt
  \belowdisplayshortskip=4pt plus 1pt minus 1pt
  \jot=3pt}
\makeatother
\hypersetup{
  pdftitle={Tighter bounds on Komlos discrepancy: existence and algorithmic results},
  pdfauthor={Emrullah Akbas, Suvrit Sra}}

\begin{document}
\title[Tighter bounds on Koml\'os discrepancy]{%
  Tighter bounds on Koml\'os discrepancy: \\ existence and algorithmic results}
\author[Emrullah Akbas]{Emrullah Akbas\textsuperscript{\normalfont*}}
\author[Suvrit Sra]{Suvrit Sra\textsuperscript{\normalfont*}}
\thanks{*\,TU Munich, Dept.~of Mathematics, School of CIT; Garching, Germany.
  Emails: \texttt{emrullah.akbas@tum.de}, \texttt{s.sra@tum.de}.}
\date{}
\begin{abstract}
Guo, Fang, and Lu recently proved the Koml\'os conjecture: for vectors
$v_1,\ldots,v_n\in\reals^d$ of Euclidean norm at most one, there are signs
$\eps_j\in\{-1,1\}$ with $\|\sum_j\eps_jv_j\|_\infty\le3\sqrt{2\pi}$.
We give a short proof of the bound $3\pi$ that keeps the geometric
lifting framework of \cite{guo2026vector}  while replacing  the analytic core by a quadratic Dirichlet
energy. Moreover, using a more fine-grained analysis of the stability of the product-cosine function of \cite{smirnov2026discrepancy} under translations, we further sharpen the bound to below \(6.9013\).
On the algorithmic side,  based on the polynomial-time construction of \cite{guo2026poly} we give a deterministic algorithm that finds a coloring  of discrepancy at most $37.54$ using at most $\widetilde O(mn+n^4)$ arithmetic operations. 
\end{abstract}
\maketitle
\raggedbottom
\setlength{\parskip}{3pt plus 1pt minus .5pt}

\section{Introduction}\label{sec:intro}
The Koml\'os problem (erstwhile conjecture) asks for an absolute constant $C$ such that for all vectors $v_1,\ldots,v_n\in\reals^d$ with $\|v_j\|_2\le1$ there are
signs $\eps_j\in\{-1,1\}$ with $\|\sum_j\eps_jv_j\|_\infty\le C$.
Guo, Fang, and Lu (GFL)~\citep{guo2026vector} recently resolved Koml\'os conjecture  with
$C=3\sqrt{2\pi}$. They use a convex-body transform and signing recursion
that originate with Banaszczyk~\citep{banaszczyk1998}, and a
cosine/Fisher initialization originating with Smirnov and
Vershynin~\citep{smirnov2026discrepancy}. We refer to other nice expositions of their
proof by~\citep{bandeira2026komlos,karingula2026elementaryproofkomlosconjecture}.

The first part of this paper studies the existence of $C$. We retain the
geometric lifting framework of~\citep{guo2026vector} but replace its
nonlinear analytic implementation by an argument based on a quadratic
Dirichlet energy.
The result is a short, expository proof that gives the looser bound $C=3\pi$
(Theorem~\ref{thm:main}); we present it first because it is the simplest
form of our  argument and introduces the objects that the sharper argument
refines. Specifically, we study an energy argument that controls the overlap
between a density and its translates through a single derivative
estimate, and this estimate is what determines the discrepancy bound. In particular, using  a more fine-grained analysis of the stability of the overlap  between the product-cosine density and its translates (rather than crudely estimating it  by the energy argument),
we obtain the tighter bound $C\le 6.9013$
(Theorem~\ref{thm:cosine} in Section~\ref{sec:cosine}), which improves
on $3\sqrt{2\pi}\approx7.52$ of~\citep{guo2026vector}.

The second part of this paper studies the algorithmic side of Koml\'os. 
Guo, Fang, and
Lu~\citep{guo2026poly} gave the first polynomial-time  algorithm for the Koml\'os problem, with a discrepancy bound $C=8892$ and a
running time of $\widetilde O(n^10)$. The first version of this
paper~\citep{akbas2026v1} and, announced in parallel,
Li~\citep{li2026fast} obtained an algorithm with improved complexity and sharper constants; Li did so by screening rows, by a clock that schedules
when a row must be inspected, and by a coupled estimate for the Gram
factor and the low-spectrum resolvent, and Sections~\ref{sec:geometry}
and~\ref{sec:screening} build on all three. Section~\ref{sec:related}
records the chronology and the bounds.

Starting from this spectral framework, we give a deterministic
algorithm that returns a signing with $\|A\eps\|_\infty<37.54$ using
$O(mn)+\widetilde O(n^4)$ arithmetic operations and no fast matrix
multiplication (Theorem~\ref{thm:algo}); up to logarithmic factors, this
cost is below Li's $n^{\omega+2}$ for every attainable exponent
$\omega>2$. The walk moves a fractional signing to the
boundary of the cube while barriers protect every row, and it must keep
enough free directions to do so. Our analysis counts the row constraints
and the positive curvature of a spectral potential against one budget,
and this joint count is what improves the constant. Finite-step
estimates then bound the number of moves, and certified row screening
bounds their cost, so that the work beyond one scan of the input does
not depend on $m$. The proved step sizes are far too small for practice,
so Section~\ref{sec:practical} adds a routine that tries a few random
signings and returns one only when an outward-rounded interval
computation certifies its discrepancy; Appendix~\ref{app:experiments}
compares it with the deterministic walk on matched inputs.

Section~\ref{sec:existence} proves Theorems~\ref{thm:main}
and~\ref{thm:cosine}, with regularity details and the numerical
certificate in Appendix~\ref{app:existence}. Section~\ref{sec:algo-intro}
states Theorem~\ref{thm:algo}, explains the construction, and proves its
discrepancy bound, with the deferred verifications and the
finite-precision and finite-arithmetic details in
Appendices~\ref{app:walk}--\ref{app:arithmetic};
Section~\ref{sec:screening} proves its running time; and
Section~\ref{sec:practical} gives the checked routine and discusses
limitations and open problems. The code, the experiment records, and the exact rational
certificates behind every numerical inequality in the paper are available
in the repository~\repourl.

\subsection{Related work}\label{sec:related}
Koml\'os conjecture itself dates back to the 1980s. Early works by \citep{beck1981roth} and Spencer~\citep{spencer1985six} were state-of-the-art for almost three decades. Banaszczyk~\citep{banaszczyk1998}
proved the bound $O(\sqrt{\log n})$ through studying the Gaussian measure of
convex bodies; his backward recursion of convex sets is the framework underlying the recent breakthrough work 
of Guo, Fang, and Lu
~\citep{guo2026vector}; Karingula and
Lovett~\citep{karingula2026elementaryproofkomlosconjecture} and
Bandeira~\citep{bandeira2026komlos} then gave simplified proofs, and
Section~\ref{sec:existence} gives another, with the
constants $3\pi$ and $6.9013$. Guo, Fang, and Lu then gave a
polynomial-time algorithm~\citep{guo2026poly}: a deterministic spectral
walk algorithm  achieving  discrepancy $8272$ and having cost $\widetilde O(mn^9+n^{10})$, where
here and below $\widetilde O$ suppresses polylogarithmic factors in $m$
and $n$. The
first version 
of this paper~\citep[Appendix~B]{akbas2026v1} improved this algorithm obtaining discrepancy  $135$ using 
$\widetilde O(mn^3+n^4)$ ordinary arithmetic operations, or
$\widetilde O(mn^\omega+n^{\omega+1})$ with faster linear algebra routines. Announced in parallel, Li~\citep{li2026fast} obtained discrepancy below $99$ with $\widetilde O(mn+n^{\omega+2})$ operations and polynomial bit complexity, through row screening, a motion clock, and a coupled Gram--resolvent
estimate. The present paper follows this line of work: 
from Li's algorithm we
adopt the row-screening mechanism and the motion clock, which determine
which rows need to be inspected at a given step. Our main new ingredient is a more refined
dimension-counting argument for the space of admissible update directions.
Our algorithm achieves a discrepancy bound
$37.54$ using   $\widetilde O(mn+n^4)$ arithmetic operations.

\section{Existence: a quadratic-energy proof and a sharper constant}\label{sec:existence}
We retain the backward propagation of convex sets from~\citep{guo2026vector}
and certify that the sets stay nonempty with a witness function whose
quadratic energy is bounded uniformly in all directions. This gives the
constant $3\pi$; Section~\ref{sec:cosine} then refines the overlap
estimate key to the argument and lowers the constant to
$6.9013$.

\begin{theorem}\label{thm:main}
For $v_1,\ldots,v_n\in\reals^d$ with $\|v_j\|_2\le1$, there are
signs $\eps_j\in\{-1,1\}$ such that
$\bigl\|\sum_j\eps_jv_j\bigr\|_\infty<3\pi$.
\end{theorem}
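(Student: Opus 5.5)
We follow the backward convex-body recursion of~\citep{guo2026vector} (after Banaszczyk~\citep{banaszczyk1998}). Fix $K=\{x:\|x\|_\infty\le C\}$ with $C=3\pi$, and define sets backward: $K_n=K$, and
\[
K_{j-1}=(K_j+v_j)\cap(K_j-v_j).
\]
If $0\in K_0$, then unwinding the recursion gives signs with $\sum_j\eps_jv_j\in K$, which is the claim. So the task is to certify $K_0\neq\emptyset$, and in fact that it contains the origin.

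To do this we lift to $\reals^d$-valued witnesses. We carry a nonnegative density $f_j$ supported in $K_j$, and we replace the usual Gaussian-measure bookkeeping with a quadratic Dirichlet energy. Concretely, start from the product-cosine density on the cube,
\[
f(x)=\prod_{i=1}^d\cos\!\Bigl(\frac{\pi x_i}{2C}\Bigr)\quad\text{on }K,
\]
following~\citep{smirnov2026discrepancy}. Its normalized energy satisfies
\[
\frac{\int\|\nabla f\|^2}{\int f^2}=\sum_i\Bigl(\frac{\pi}{2C}\Bigr)^2,
\]
but measured in any single direction $u$ the energy is
\[
\frac{\int(\partial_uf)^2}{\int f^2}=\Bigl(\frac{\pi}{2C}\Bigr)^2\|u\|_2^2,
\]
uniformly in $u$, because the coordinate factors are orthogonal.

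The key step is an overlap estimate. For $g$ with $\int g^2=1$ and a shift $v$ with $\|v\|\le1$, we want to lower bound $\langle g(\cdot+v),g(\cdot-v)\rangle$. Write $\phi(t)=\langle g(\cdot+tv),g(\cdot-tv)\rangle$. Then $\phi(0)=1$, and differentiating twice together with Cauchy--Schwarz gives
\[
1-\phi(1)\le 2\int(\partial_vg)^2.
\]
Alternatively, use the Fourier representation, where the overlap is $\int|\hat g|^2\cos(2\langle\xi,v\rangle)$, together with $1-\cos\theta\le\theta^2/2$. The new witness is then taken to be
\[
g_{j-1}=\min\bigl(g_j(\cdot+v_j),\,g_j(\cdot-v_j)\bigr)
\]
or the geometric mean $\sqrt{g_j(\cdot+v_j)g_j(\cdot-v_j)}$, which is supported in $K_{j-1}$. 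We would track two quantities: a lower bound on its mass $\int g^2$, and the preservation of the directional energy bound. The geometric mean is convenient because $\log$ of it is the average of the two shifts, so concavity-type bounds pass through.

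Iterating over all $n$ steps naively loses a factor per step. This is where the GFL lifting matters. We lift vectors to $w_j=(v_j,\sqrt{1-\|v_j\|^2}e_j)$ type orthonormal-ish completions so that the $v_j$ effectively form a sub-isometry, $\sum_jv_jv_j^\tp\preceq I$. We then aggregate the energy losses through $\sum_j\int(\partial_{v_j}g)^2\le\int\|\nabla g\|^2\cdot\|\sum v_jv_j^\tp\|$. With this, the total multiplicative mass loss is $\exp\bigl(-O(\sum_i(\pi/2C)^2)\bigr)$ per coordinate. Comparing this with the positivity needed coordinatewise, via a one-dimensional comparison, should yield nonemptiness exactly once $(\pi/2C)^2\cdot 2\cdot(\text{const})<1$. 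Our target is that the constants land at $C=3\pi$. Strictness comes from having strict inequality in the energy comparison.

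\textbf{Main obstacle.} The hardest step is making the energy bound propagate through the nonlinear operation (min or geometric mean) without degrading the uniform-in-direction control. The first thing to try is to keep the witness fixed as the initial cosine density, translated by the accumulated partial sums, and to bound overlaps only through the single derivative estimate above. This avoids re-estimating the energy of each new witness. Regularity issues (Lipschitz boundary, differentiation under the integral) would be deferred, as the paper indicates, to Appendix~\ref{app:existence}.
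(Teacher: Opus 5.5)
There is a genuine gap, and it begins with the recursion. With $K_{j-1}=(K_j+v_j)\cap(K_j-v_j)$, a point of $K_{j-1}$ can step into $K_j$ with \emph{either} sign. Hence $K_0=\{z: z+\sum_j\eps_jv_j\in K\ \text{for all}\ \eps\}$, and $0\in K_0$ would mean that \emph{every} signing has discrepancy at most $C$. That is false: for $d=1$ and $v_1=\cdots=v_n=1$ you get $K_0=[-(C-n),C-n]$, which is empty once $n>C$.

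The missing idea is the convexified update \eqref{eq:T}, $\Tc_vK=((K-v)\cap(K+v))+(-2,2)v$. The Minkowski segment restores the width lost in direction $v$, the set stays convex, and it lies in the union $(K-v)\cup(K+v)$ rather than in the intersection.

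For the same reason, your witness update (pointwise minimum or geometric mean of the two shifts) cannot carry the energy invariant, because it is supported in the intersection. Already after one step with $v=e_1$, the one-dimensional Poincar\'e inequality forces $\|\partial_{e_1}g\|_2^2\ge(\pi/(2C-2))^2\|g\|_2^2>\frac1{36}\|g\|_2^2$ when $C=3\pi$. Your estimate $1-\langle g(\cdot+v),g(\cdot-v)\rangle\le2\|\partial_vg\|_2^2$ is correct, but it only controls the mass lost in one step. Nothing in the proposal re-establishes $\Ec(g_{j-1})\preceq\frac1{36}I$. The fallback of keeping the initial cosine translated by accumulated partial sums presupposes the signs you are trying to find.

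The attempt to absorb the accumulated loss does not work either. Completing $v_j$ to unit vectors $w_j$ does not give $\sum_jv_jv_j^\tp\preceq I$; repeating a single vector $n$ times is a counterexample. Even granting that bound, $\sum_j\|\partial_{v_j}g\|_2^2\le\trace\Ec(g)\,\|\sum_jv_jv_j^\tp\|$ involves $\trace\Ec(g)=d(\pi/2C)^2$, which would force $C$ to grow like $\sqrt d$. The final comparison that is supposed to produce $C=3\pi$ is not derived.

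In the paper nothing accumulates over $j$: the bound $\Ec(\psi_j)\preceq\frac1{36}I$ is reproduced exactly at every step. The lifting there is an auxiliary coordinate, not a completion of the vectors. The paper proceeds as follows:
\begin{enumerate}
\item $\psi$ is lifted to $\Phi(y,s)=6^{-1/2}\psi(y+sv)\one_{(-3,3)}(s)$ on $B=\{(y,s):|s|<3,\ y+sv\in K\}$.
\item $\Phi$ is symmetrized in $s$, which does not increase the $y$-energy; the symmetrized body's section at $s=1$ is exactly $\Tc_vK$.
\item The Brascamp--Lieb convexity of $s\mapsto\lambda_Q(D_s)$, combined with a maximization over the weights $Q$, yields a witness on that section with the same matrix energy bound. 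This requires the height $\sint|s|(\Phi^*)^2$ to be at least $1$.
\end{enumerate}
The constant comes from this height condition. The $L^1$ overlap of $\rho=\psi^2$ satisfies $\sint\min\{\rho,\rho(\cdot+tv)\}\ge1-t\|\partial_v\psi\|_2$, which gives height at least $\frac32-3\|\partial_v\psi\|_2$. This is at least $1$ precisely when $\|\partial_v\psi\|_2\le\frac16=\frac{\pi}{2C}$, that is, when $C=3\pi$.
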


\subsection{Proof mechanism}
We start by summarizing the high-level mechanism behind the proof of Theorem \ref{thm:main}. Let $K_n=(-C,C)^d$ be the cube of side-length $2C$; for us $C=3\pi$ suffices. Solving Koml\'os means finding a vector $z_n=\sum_j\eps_jv_j$ that lies in $K_n$. We can try to reach $K_n$ step-by-step. Say we begin with $z_0=0\in K_0$; then, at the $j$-th step we choose $z_j\gets z_{j-1}+\eps_jv_j$. If we can ensure that such a choice is always possible, then eventually one obtains $z_n=\sum_j\eps_jv_j\in K_n$. \emph{How should we ensure this?} The key idea here is to propagate feasibility backwards. Given a valid $K_j$, the set of states from which we can reach $K_j$ in one step is $(K_j-v_j)\cup(K_j+v_j)$. But iterating these unions can produce an exponentially large ``sign tree.''

We can tame this exponential blowup if we can instead arrange that
$K_{j-1}\subseteq(K_j-v_j)\cup(K_j+v_j)$. Then, if the backward propagation
finally yields $0\in K_0$, we may follow these inclusions forward and
claim existence of a feasible coloring. There is, however, a wrinkle: the exact
one-step feasible set $(K-v)\cup(K+v)$ is typically nonconvex, which makes
it hard to analyze. We therefore seek a \emph{convex subset} of this union
that is still ``large enough'' to be useful for sign recovery. GFL's transform
is precisely such a \emph{convexified update}:
\begin{equation}\label{eq:T}
\Tc_vK:=\bigl((K-v)\cap(K+v)\bigr)+(-2,2)v.
\end{equation}
Indeed, $\Tc_vK\subseteq(K-v)\cup(K+v)$: if $y=z+tv$ with
$z\pm v\in K$ and $|t|<2$, convexity gives $y-v\in K$ for $t\ge0$ and
$y+v\in K$ for $t\le0$. Thus every retained point has a valid signed
move into $K$. The transform also preserves openness, boundedness,
convexity, and central symmetry. Set $K_{j-1}=\Tc_{v_j}K_j$ for
$j=n,\ldots,1$: we build the sets backwards and recover the signs forwards.

\subsection{The crux}
The crux to realizing the above existence mechanism is to ensure that \emph{convex approximations}~\eqref{eq:T} never lead to empty sets during the propagation $K_{j-1}=\Tc_{v_j}K_j$, and also that $0 \in K_0$.
A practical way to certify nonemptiness is to view each convex set $K$ through a \emph{witness} $\psi$ that vanishes outside $K$; then, the normalization $\|\psi\|_2=1$ certifies that $K\neq\emptyset$. A bigger challenge is to maintain such densities even after updates via~\eqref{eq:T}: and this is where the ``quadratic-energy'' idea central to our exposition steps in.

Formally, let $\psi_j\in H_0^1(K_j)$ be a  \emph{witness} that ensures $\|\psi_j\|_2=1$, and where $H_0^1(K)$ denotes the space of square-integrable functions on a bounded, open, convex set $K$ that are zero outside $K$ and have square-integrable (weak) first derivatives. The corresponding \emph{quadratic-energy} is defined as
\begin{equation}\label{eq:E}
\Ec(\psi_j):=\int\nabla\psi_j\nabla\psi_j^{\tp}\preceq aI,
\qquad \text{for some } \, a=1/36.
\end{equation}
For a unit vector $e$, write $\partial_e\psi=e\cdot\nabla\psi$, so that $e^{\tp}\Ec(\psi)e=\|\partial_e\psi\|_2^2$. The uniform (matrix-order) bound $\mathcal E(\psi_j) \preceq aI$ limits how $\psi$ varies in \emph{each} direction. This uniformity is the key invariant that we target, at each step, to construct a new witness $\psi_{j-1}$ on $K_{j-1}$ that also satisfies~\eqref{eq:E}.

We must now show that this invariant can indeed be propagated. Suppose therefore that a
convex set $K$ carries a witness $\psi$ satisfying~\eqref{eq:E}; for
$\|v\|_2\le1$, we must construct a new witness on $\Tc_vK$. There is no obvious way to obtain such a witness directly from $\psi$, especially since $\Tc_v$ contains several translations. 

Following \cite{guo2026vector}, we encode the translations by introducing
an auxiliary coordinate. If $s$ records displacement in the direction
$v$, then the translates of $K$ are the horizontal sections of
\[
B:=\{(y,s)\in\reals^d\times\reals:
      |s|<3,\ y+sv\in K\}.
\]
For each fixed $y$, the allowed values of $s$ form an interval (a \emph{fiber}). Center each
such interval at zero without changing its length, and call the
resulting convex body $B^*$. As we show below, its section at $s=1$ is precisely $\Tc_vK$. Thus, we first lift $\psi$ to $B$, rearrange it along the fibers, and then obtain a new witness on the section at $s=1$ of $B^*$. The following lemma supplies this last step: it passes from a
witness on a symmetric convex body to one on a prescribed section.
It adapts \citep[Lemma~3.4]{guo2026vector} to quadratic energy.

\begin{lemma}\label{lem:section}
Let $B\subset\reals^d\times\reals$ be bounded, open, convex, and
invariant under $s\mapsto-s$, with $D_s=\{y:(y,s)\in B\}$.
Let $\Phi\ge0$ vanish outside $B$, satisfy $\|\Phi\|_2=1$ and have
square-integrable weak derivatives in $y$ on $\reals^d\times\reals$.
Suppose also that
\[
M:=\sint|s|\Phi^2\ge1,\qquad
E_y:=\sint\nabla_y\Phi\nabla_y\Phi^{\tp}\preceq aI.
\]
Then, $D_1$ carries a witness of energy $a$ satisfying \eqref{eq:E}.
\end{lemma}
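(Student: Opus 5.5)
The plan is to build the new witness on $D_1$ by averaging the squared sections of $\Phi$ along the auxiliary coordinate, after first using symmetry and convexity of $B$ to show that the relevant sections already lie inside $D_1$. The one step I have not closed is the normalization: I do not yet see how to derive it from $M\ge1$ alone.

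\textbf{Step 1 (nesting of sections).} Since $B$ is convex and invariant under $s\mapsto-s$, we have $(y,s)\in B$ iff $(y,-s)\in B$. Taking convex combinations of $(y,s)$ and $(y,-s)$ gives $D_s\subseteq D_t$ whenever $|t|\le|s|$. In particular $D_s\subseteq D_1$ for all $|s|\ge1$. So any function of $y$ built only from the sections $\Phi(\cdot,s)$ with $|s|\ge1$ vanishes outside $D_1$.

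\textbf{Step 2 (averaged witness).} For a weight $w\ge0$ supported in $\{|s|\ge1\}$, set
\[
\psi_w(y):=\Bigl(\sint w(s)\,\Phi(y,s)^2\,ds\Bigr)^{1/2}.
\]
This vanishes outside $D_1$ by Step 1. For a unit vector $e$, differentiating under the integral and applying Cauchy--Schwarz gives
\[
(\partial_e\psi_w)^2\le\sint w(s)\,(\partial_e\Phi(y,s))^2\,ds .
\]
The same argument applied to $\xi^{\tp}\nabla\psi_w$ yields the matrix inequality
\[
\Ec(\psi_w)\preceq\sint w\,\nabla_y\Phi\nabla_y\Phi^{\tp},
\qquad
\|\psi_w\|_2^2=\sint w\,\Phi^2 .
\]
The regularity claim, that $\psi_w\in H_0^1(D_1)$ when $\Phi$ is only weakly differentiable in $y$, should be standard (regularize $\psi_w$ as $(\sint w\Phi^2+\delta)^{1/2}-\delta^{1/2}$ and let $\delta\to0$). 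I would defer it to the appendix.

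\textbf{Step 3 (choice of weight; the main obstacle).} Rescale $\psi_w$ to unit norm. We then need
\[
\sint w\,\nabla_y\Phi\nabla_y\Phi^{\tp}\preceq a\sint w\,\Phi^2 ,
\]
and this must hold as a single matrix inequality, not direction by direction. A pointwise bound $w\le1$ gives the left side $\preceq E_y\preceq aI$ for free. It then suffices that $\sint w\Phi^2\ge1$, which would force all the mass of $\Phi$ into $\{|s|\ge1\}$. The hypothesis $M\ge1$ alone does not force this, so this is where I expect the real difficulty.

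What I would try first is to avoid discarding the slab $\{|s|<1\}$ and instead transport it onto $D_1$. For $|s|\ge1$ the point $(y,1)$ is the combination $\tfrac{1+s}{2s}(y,s)+\tfrac{s-1}{2s}(y,-s)$. More generally, for $(y,s),(y',s')\in B$ with $s'\le1\le s$, the point at height $1$ on the segment joining them lies in $B$. So I would define $\psi$ on $D_1$ by a Pr\'ekopa--Leindler-type sup-convolution of the sections $\Phi(\cdot,s)^2$ and $\Phi(\cdot,-s)^2$, with interpolation weights $\lambda_s=\tfrac{1+s}{2s}$.

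Energies of sup-convolutions behave subadditively under the quadratic form $\xi\mapsto\xi^{\tp}\Ec\,\xi$, with the weights $\lambda_s^2+(1-\lambda_s)^2\le1$. The first moment $M=\sint|s|\Phi^2$ is exactly what controls the mass transported to height $1$, and $M\ge1$ should then give $\|\psi\|_2\ge1$. If this interpolation proves too lossy, the fallback is to use the fiber-centred structure of $\Phi$ coming from the construction of $B^*$ (symmetric and decreasing along each fiber), which allows a one-dimensional layer-cake comparison between $\sint|s|\Phi^2$ and the mass in $\{|s|\ge1\}$.
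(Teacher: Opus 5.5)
Your proposal has a genuine gap, and it goes deeper than the normalization you flag. Step~2 cannot be rescued by any weight $w$ supported in $\{|s|\ge1\}$. The hypotheses bound only the \emph{average} Rayleigh quotient of the sections, and the sections with $|s|\ge1$ can be much stiffer than average.

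For example, take $d=1$ and $B=\{(y,s):|s|<3,\ |y|<\tfrac L2(1-|s|/3)\}$, so that $D_s$ is an interval of length $L(1-|s|/3)$. Let $\Phi(y,s)=\sqrt{p(s)}\,u_s(y)$, with $u_s$ the normalized ground state of $D_s$ and $p$ a probability density putting mass $1/2$ near $s=0$ and $1/4$ just beyond each of $s=\pm2$, so that $M\ge1$. Then $E_y\approx\tfrac12(\pi^2+9\pi^2)/L^2=5\pi^2/L^2=:a$. But every section in $\{|s|\ge1\}$ on which $\Phi$ lives has Rayleigh quotient at least $9\pi^2/L^2>a$. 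So $\int w(\partial_y\Phi)^2\le a\int w\Phi^2$ fails for every admissible $w$, even though the lemma holds here, since $\lambda(D_1)=9\pi^2/(4L^2)<a$.

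The low energy sits on the wide sections near $s=0$, which are strictly larger than $D_1$, and it has to be transferred into $D_1$. That transfer is a Brunn--Minkowski inequality for Dirichlet energy across sections, and it is the whole content of the lemma. Your sup-convolution sketch does not supply it:
\begin{itemize}
\item With $\lambda_s=(1+s)/(2s)$, pairing $(y,s)$ with $(y,-s)$ is a convex combination only for $s\ge1$, where $D_{-s}=D_s\subseteq D_1$ already. The mass in $\{|s|<1\}$ is never touched.
\item The two claims that would carry the argument are asserted without proof and are not standard facts for arbitrary test functions: that energy is subadditive under sup-convolution, and that $M\ge1$ forces $\|\psi\|_2\ge1$. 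Pr\'ekopa--Leindler bounds the mass of a sup-convolution by a \emph{geometric} mean of masses, and it is unclear how an arithmetic first moment would enter.
\item The fallback also fails as stated. A nonzero fiberwise symmetric-decreasing $\Phi$ always has mass in $\{|s|<1\}$, and in any case the lemma assumes no such structure.
\end{itemize}

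The missing idea is to stop building the new witness out of $\Phi$ and argue through eigenvalues of the sections instead.
\begin{itemize}
\item For $Q\succ0$ with $\trace Q=1$, the paper sets $\lambda_Q(D)=\inf\{\trace(Q\Ec(f)):f\in H_0^1(D),\ \|f\|_2=1\}$.
\item It invokes Brascamp--Lieb's Brunn--Minkowski theorem for the Dirichlet eigenvalue, after the substitution $x=Q^{1/2}z$, to get that $s\mapsto\lambda_Q(D_s)$ is convex. This function is even, and your Step~1 together with domain monotonicity shows it is nondecreasing in $|s|$.
\item Put $p(s)=\int\Phi(y,s)^2\,dy$. Testing each normalized section $\Phi(\cdot,s)/\sqrt{p(s)}$ in the infimum and applying Jensen gives $\lambda_Q(D_1)\le\lambda_Q(D_M)\le\int p(s)\lambda_Q(D_s)\,ds\le\trace(QE_y)\le a$.
\end{itemize}
This is where $M\ge1$ enters, as the $p$-mean of $|s|$. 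The resulting witness is a ground state of $D_1$, not a transform of $\Phi$.

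The other issue you rightly raised, that a single function must satisfy the matrix bound, is settled by maximizing $\lambda_Q(D_1)$ over $\{Q\succeq\alpha I/d,\ \trace Q=1\}$. At a maximizer $Q_\alpha$ with ground state $u_\alpha$, first-order optimality tested against $Q_e=(1-\alpha)ee^\tp+\alpha I/d$ gives $(1-\alpha)e^\tp\Ec(u_\alpha)e\le\lambda_{Q_\alpha}(D_1)\le a$ for every unit $e$. A compactness limit as $\alpha\downarrow0$ then yields a witness with $\Ec(u)\preceq aI$.
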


\begin{proof} 
Following the strategy of~\citep[Lemma~3.4]{guo2026vector}, we first bound the energy on \(D_1\) and then obtain one witness that controls every direction. First fix a positive definite weight matrix $Q\succ0$, $\trace Q=1$, and consider the \emph{energy minimization} problem,
\begin{equation} \label{eq:variational}
    \lambda_Q(D):= \inf\,\{\trace(Q\Ec(f)) \mid\  f\in H_0^1(D),\ \ \|f\|_2=1\}.
\end{equation}
Using ~\citep[Theorem~6.2]{brascamp1976extensions}, we have as in \cite{guo2026vector} that  $s\mapsto\lambda_Q(D_s)$ is convex,   even,
and  nondecreasing for $s\ge0$.
Let $p(s)=\int\Phi(y,s)^2\,dy$, using Jensen's inequality we obtain,
\begin{equation}\label{eq:section}
\lambda_Q(D_1)\le\lambda_Q(D_M) \le\sint p(s)\lambda_Q(D_s)\,ds \le\trace(QE_y)\le a.
\end{equation}
Let $0 < \alpha < 1$. The minimizing witness may depend on \(Q\). To obtain one witness controlling every direction, we now maximize \(\lambda_Q(D_1)\) over $Q\succeq\alpha I/d$, $\trace Q=1$. Let $Q_\alpha$ be a maximizer, and let 
 $u_\alpha$  be a minimizer of \eqref{eq:variational}.
Moreover, for any unit $e$ denote
$Q_e=(1-\alpha)ee^{\tp}+\alpha I/d$. Observing that $D\lambda_{Q_\alpha}[H]=\trace(H\Ec(u_\alpha))$, since $Q_\alpha$ is a maximizer, we then obtain the upper bound
\begin{equation}\label{eq:matrix-comparison}
(1-\alpha)e^{\tp}\Ec(u_\alpha)e
\le\trace(Q_e\Ec(u_\alpha))\le\trace(Q_\alpha \Ec(u_\alpha))
=\lambda_{Q_\alpha}(D_1)\le a.
\end{equation}
Thus,  $u_\alpha$ satisfies
$\Ec(u_\alpha)\preceq (1-\alpha)^{-1}aI$, and also $\|u_\alpha\|_2=1$.  Now, a standard limiting argument with $\alpha\downarrow 0$, whose details are given in Appendix~\ref{app:technical}, yields a witness 
$u\in H_0^1(D_1)$ satisfying \eqref{eq:E}.
\end{proof}

\noindent The next lemma shows that condition \eqref{eq:E} is stable under the update $\Tc_v$: once a witness is available at one stage of the backward iteration, it can be propagated to the next stage.
\begin{lemma}\label{lem:stability}
Let $K\subset\reals^d$ be nonempty, bounded, open, and convex, and let
$\|v\|_2\le1$. If $K$ carries a witness satisfying~\eqref{eq:E}, then
$\Tc_vK$ is nonempty and carries a witness satisfying~\eqref{eq:E}.
\end{lemma}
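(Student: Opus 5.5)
The plan is to realize the lifting described just before Lemma~\ref{lem:section}, check its hypotheses, and read off the witness on the section $s=1$, which is $\Tc_vK$. Nonemptiness of $\Tc_vK$ then follows automatically, since a unit-norm $H_0^1$ function lives on it. I expect steps 1–3 below to be routine and step 4 to be the real work.

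\textbf{Step 1: the section of $B^*$.} For fixed $y$, the fiber of $B$ is
\[
I_y=\{s\in(-3,3):\ y+sv\in K\},
\]
an open interval of length $L(y)$. The Steiner symmetral $B^*$ replaces $I_y$ by $(-L(y)/2,L(y)/2)$; it is open, convex, bounded and even in $s$. I will verify that $(y,1)\in B^*$, i.e.\ $L(y)>2$, exactly when $y\in\Tc_vK$. If $y=z+tv$ with $z\pm v\in K$ and $|t|<2$, then $s=-t\pm1$ lie in $I_y$, so $L(y)>2$. Conversely, if $L(y)>2$, pick $s_1<s_2$ in $I_y$ with $s_2-s_1=2$ and put $z=y+\tfrac{s_1+s_2}{2}v$ and $t=-\tfrac{s_1+s_2}{2}$. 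Since $I_y\subseteq(-3,3)$ we get $|t|<2$, so $y\in\Tc_vK$. Hence $D_1(B^*)=\Tc_vK$.

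\textbf{Step 2: the lift.} Define
\[
\Phi_0(y,s)=|\psi(y+sv)|\,g(s),\qquad \sint g^2=1,\quad g \text{ even, supported in } [-3,3].
\]
The natural candidates for $g$ are the uniform profile $1/\sqrt6$ and the cosine profile $\cos(\pi s/6)/\sqrt3$; the constant $3\pi$ suggests the cosine. By Fubini and translation invariance in $y$:
\begin{itemize}
\item $\|\Phi_0\|_2=1$;
\item $\sint\nabla_y\Phi_0\nabla_y\Phi_0^{\tp}=\Ec(\psi)\preceq aI$;
\item $\Phi_0$ vanishes outside $B$.
\end{itemize}

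\textbf{Step 3: rearrangement.} Let $\Phi$ be the symmetric decreasing rearrangement of $\Phi_0$ in $s$, for each fixed $y$. Then $\Phi\ge0$ vanishes outside $B^*$ and $\|\Phi\|_2=1$. For each unit $e\in\reals^d$, applying Pólya–Szegő to Steiner symmetrization in the plane spanned by $e$ and the $s$-axis gives $\|\partial_e\Phi\|_2\le\|\partial_e\Phi_0\|_2$. Hence $E_y(\Phi)\preceq aI$ as a quadratic form.

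\textbf{Step 4 (main obstacle): the moment condition.} It remains to show $M=\sint|s|\Phi^2\ge1$, which is the only place the value $a=1/36$ enters. Rearrangement pushes mass toward $s=0$ and lowers $M$, so it must be controlled by the regularity of $\Phi_0$ along $s$.

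For fixed $y$, write $h_y(s)=\Phi_0(y,s)$. Then $\partial_sh_y=g\,\partial_v|\psi|+g'|\psi|$, and $\sint\|\partial_vψ\|^2\le a\|v\|^2\le1/36$. One-dimensional Pólya–Szegő gives $\|\partial_sh_y^*\|_2\le\|\partial_sh_y\|_2$.

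My first attempt is to compare $\Phi^2$ with the "flat" profile $g^2(s)\,\sint\Phi_0(y,\cdot)^2$, which has moment
\[
\sint|s|g^2 = \tfrac32 \ \text{(uniform)} \quad\text{or}\quad \tfrac{3}{2}-\tfrac{6}{\pi^2}\cdot\text{(correction)} \ \text{(cosine)}.
\]
I would bound the loss $\sint|s|\bigl(\Phi_0^2-\Phi^2\bigr)$ by Cauchy–Schwarz,
\[
\sint|s|\,\bigl|\Phi_0^2-\Phi^2\bigr|\le\|s(\Phi_0-\Phi)\|_2\,\|\Phi_0+\Phi\|_2,
\]
together with a Poincaré/Wirtinger estimate on $(-3,3)$ with constant $(6/\pi)^2$. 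The $\pi$ here is where $3\pi$ should come from.

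If that accounting is too lossy, the fallback is to optimize directly over even profiles. The problem is to minimize $\sint|s|h^2$ subject to $\sint h^2=1$ and the $s$-energy budget inherited from $a$. The extremal is then a cosine-type function, and I would check numerically that $a=1/36$ keeps the minimum at least $1$.

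Once $M\ge1$ is in hand, Lemma~\ref{lem:section} applied to $B^*$ and $\Phi$ yields a witness on $D_1=\Tc_vK$ satisfying~\eqref{eq:E}. This completes the proof.
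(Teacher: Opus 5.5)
\textbf{There is a genuine gap in Step 4.} Your Steps 1 and 2, with the uniform profile $g=1/\sqrt6$, are exactly the paper's lift and section computation. Step 4, however, is the entire content of the lemma, and you leave it as a plan whose routes cannot close it.

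The cosine profile fails for every $\psi$. By Fubini, $\int|s|\Phi_0^2=\int|s|g^2=\tfrac32-\tfrac{6}{\pi^2}<1$. As you observe yourself, rearrangement in $s$ only lowers $\int|s|(\cdot)^2$, so $M<1$ however small $a$ is.

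With the uniform profile the flat moment is $\tfrac32$. But then $s\mapsto\Phi_0(y,s)$ jumps at $s=\pm3$, so $\partial_s\Phi_0\notin L^2$. There is no $s$-energy for one-dimensional P\'olya--Szeg\H{o}, Wirtinger, or a profile optimization to act on.

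More generally, no argument can work that uses only $\|\Phi\|_2=1$, support in $|s|<3$, and an $s$-energy budget. Any profile vanishing at $\pm3$ already costs at least $\pi^2/36$. Then $\phi(y)\cos(\pi s/6)/\sqrt3$, for any unit $\phi$, meets every such budget and has moment $\tfrac32-\tfrac6{\pi^2}$. Note also that the $\pi$ in $3\pi$ comes from the base witness $\prod_i\cos(x_i/6)$ on $(-3\pi,3\pi)^d$, not from this lemma.

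\textbf{The missing idea is an exact identity, not regularity in $s$.} For $f\ge0$ on $\reals$, the layer-cake formula gives $\int|s|(f^*)^2\,ds=\tfrac14\iint\min\{f(s)^2,f(s')^2\}\,ds\,ds'$. The reason is that each superlevel set of length $\mu$ becomes a centered interval contributing $\mu^2/4$.

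Apply this with $f(s)=\tfrac1{\sqrt6}\psi(y+sv)\one_{(-3,3)}(s)$ and integrate over $y$. This yields $M=\tfrac1{12}\int_0^6(6-t)\,O(tv)\,dt$, where $O(tv)=\int\min\{\rho,\rho(\cdot+tv)\}$ and $\rho=\psi^2$.

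By Cauchy--Schwarz, $1-O(tv)=\tfrac12\|\rho-\rho(\cdot+tv)\|_1\le\tfrac t2\|\partial_v\rho\|_1\le t\|\partial_v\psi\|_2\le t/6$. Since $\tfrac1{12}\int_0^6(6-t)(1-t/6)\,dt=1$, you get $M\ge1$. This is the only place $a=1/36$ is used.

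A smaller point concerns Step 3. The bound $\|\partial_e\Phi\|_2\le\|\partial_e\Phi_0\|_2$ is true, but P\'olya--Szeg\H{o} in the $(e,s)$-plane does not give it. That inequality only controls $\|\partial_e\Phi\|_2^2+\|\partial_s\Phi\|_2^2$ jointly, and it needs $\partial_s\Phi_0\in L^2$. The right argument applies the $L^2$-nonexpansiveness $\|f^*-g^*\|_2\le\|f-g\|_2$ of the fiberwise rearrangement to difference quotients in $y$.
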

\begin{proof}
The key idea underlying the construction is to show that small energy means that translating the witness changes it slowly, so translates overlap substantially. This overlap can then be turned into sufficient spread in $s$, while preserving the energy bound, allowing us to invoke Lemma~\ref{lem:section}. 

Consider therefore a witness $\psi$ satisfying~\eqref{eq:E}. Replacing it by
$|\psi|$, we may assume $\psi\ge0$ without changing its norm or energy.
GFL's lift~\citep[Lemma~4.1]{guo2026vector} is
$B=\{(y,s):|s|<3,\ y+sv\in K\}$, with
$\Phi(y,s)=\tfrac1{\sqrt6}\psi(y+sv)\one_{(-3,3)}(s)$.
It preserves the norm and energy in $y$:
$\|\Phi\|_2=1$ and
$\int\nabla_y\Phi\nabla_y\Phi^{\tp}=\Ec(\psi)\preceq aI$. 

To make $\Tc_vK$ the section at $s=1$, center each fiber
$J_y=\{s:(y,s)\in B\}$ without changing its length $\ell(y)$.
This gives $B^*=\{(y,s):2|s|<\ell(y)\}$.
Likewise, define the symmetric rearrangement  $\Phi^*$  of $\Phi$ by replacing each superlevel set
$\{s:\Phi(y,s)>r\}$ with the centered interval of equal length, that is $\Phi^*(s)
:=
\sup\{t\ge0:\mu(t)>2|s|\}$ with $\mu(t):=
\bigl|\{s\in\reals:\Phi(s)>t\}\bigr|$. Then,  $\Phi^*$ vanishes outside $B^*$ and $\|\Phi^*\|_2=1$.

Moreover, since $B$ is convex, $\ell$ is concave on its domain, and
$B^*$ is bounded, open, convex, and symmetric.
Its section at $s=1$ is
\[
(B^*)_1=\{y:\ell(y)>2\}=\Tc_vK.
\]
Indeed, $\ell(y)>2$ means that $J_y$ contains
$[\tau-1,\tau+1]$ for some $|\tau|<2$, or equivalently
$y+(\tau\pm1)v\in K$. This is exactly the condition in~\eqref{eq:T}.

Let $f:s\mapsto\Phi(y,s)$ and $g:s\mapsto\Phi(y+te,s)$. Then, as in the contraction argument~\citep[Lemma~4.2]{guo2026vector}, we obtain, using  $\|f^*-g^*\|_2\le \|f-g\|_2$, that 
$\|\partial_e\Phi^*\|_2\le\|\partial_e\Phi\|_2\le\sqrt a$. Thus, $\Phi^*$ has the energy bound $\int\nabla_y\Phi^*\nabla_y{\Phi^*}^{\tp}=\Ec(\psi)\preceq aI$ as required by Lemma~\ref{lem:section}.

\Needspace{14\baselineskip}
It remains to show that $\sint|s|(\Phi^*)^2\ge1$. We relate this
quantity to the energy bound by counting pairs of $s$-values in the lift.
Write $\rho=\psi^2$ and let
\[
O(tv):=\sint\min\{\rho(x),\rho(x+tv)\}\,dx
\]
be the mass shared by the density and its translate.
Then, using ~\citep[Lemma~4.3]{guo2026vector} we obtain,
\begin{equation}\label{eq:height-exact}
H:=\sint|s|(\Phi^*)^2
=\tfrac1{12}\sint_0^6(6-t)O(tv)\,dt.
\end{equation}

Moreover, using
$\partial_v\rho=2\psi\partial_v\psi$ and Cauchy--Schwarz, we obtain as in \citep[Lemma~3.1]{guo2026vector},
\[
O(tv) =1-\tfrac12\|\rho-\rho(\cdot+tv)\|_1 \ge1-\tfrac t2\|\partial_v\rho\|_1
\ge1-t\|\partial_v\psi\|_2.
\]
Our choice $a=1/36$ now pays off: since $\|v\|_2\le1$,
we have $\|\partial_v\psi\|_2\le\sqrt a=1/6$, and hence
\begin{equation}\label{eq:height-simple}
H \ge \tfrac32 - 3\|\partial_v\psi\|_2 \ge 1.
\end{equation}
Lemma~\ref{lem:section}, applied to $B^*$ and $\Phi^*$, now shows the existence of
a witness on $(B^*)_1=\Tc_vK$ satisfying \eqref{eq:E}.
\end{proof}

\begin{proof}[\textbf{Proof of Theorem~\ref{thm:main}}]
 Let $K_n=(-3\pi,3\pi)^d$.  As a witness for $K_n$ we use the
product-cosine function of Smirnov--Vershynin~\citep{smirnov2026discrepancy},
\[
\psi_n(x)=(3\pi)^{-d/2}\prod\nolimits_i\cos(x_i/6)\one_{K_n}(x),
\]
which satisfies $\|\psi_n\|_2=1$ and $\Ec(\psi_n)=aI$.
Now, Lemma \ref{lem:stability} shows that  all $K_j$ are nonempty, so that in particular $0\in K_0$.
Therefore, using \eqref{eq:T}, we obtain for $j=1,\ldots,n$ signs $\eps_j$ with
$z_j=z_{j-1}+\eps_jv_j\in K_j$, so that  $\sum_j\eps_jv_j=z_n\in(-3\pi,3\pi)^d$.
\end{proof}

\subsection{A sharper constant}\label{sec:cosine}
In the proof of Lemma~\ref{lem:stability}, we used the 
estimate $1-O(tv)\le t\|\partial_v\psi\|_2$
to bound  $H$ in~\eqref{eq:height-exact}. For the product-cosine
witness rescaled to $(-C,C)^d$, this gives $H\ge \frac32-\frac{3\pi}{2C}$. 
To guarantee $H\ge1$ uniformly over  directions $v$ using this
estimate, we therefore need $C\ge3\pi$. We obtain a smaller constant
by estimating the overlap of $\psi^2$ with its translates directly,
rather than through the Dirichlet energy. 
\begin{theorem}\label{thm:cosine}
Assume that the hypotheses of Theorem~\ref{thm:main} hold. Then, there is a coloring $\eps\in\{\pm1\}^n$
such that
\[
 \left\|\sum\nolimits_j\eps_jv_j\right\|_\infty
 <C_{\mathrm{cos}}<6.9013,
\]
where $C_{\mathrm{cos}}\in(6.9012574,6.9012575)$ is the unique solution  of the  equation
\begin{equation}\label{eq:cosine-constant}
 \frac{3\sqrt{2\pi}}{C_{\mathrm{cos}}}
 \int_0^3(1-r/3)^2
 \exp\!\left(-\frac{\pi^2r^2}{2C_{\mathrm{cos}}^2}\right)\,dr=1.
\end{equation}
\end{theorem}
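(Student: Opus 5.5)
The plan is to run the backward recursion $K_{j-1}=\Tc_{v_j}K_j$ from the proof of Theorem~\ref{thm:main} unchanged, but to replace the Dirichlet-energy invariant \eqref{eq:E} by an invariant on the overlap function itself. Concretely, set $K_n=(-C,C)^d$ with $C=C_{\mathrm{cos}}$ and let $\psi_n$ be the product-cosine witness rescaled to $K_n$. Let $\omega_C(t):=\inf_{\|v\|_2\le1}O_{\psi_n}(tv)$ be its worst-case overlap profile. The invariant I would propagate is
\[
K_j \text{ carries a witness } \psi\ge0,\ \|\psi\|_2=1,\ \text{with } O_\psi(tv)\ge\omega_C(t)\text{ for all } t\ge0,\ \|v\|_2\le1 .
\]
With this invariant, \eqref{eq:height-exact} gives $H\ge\frac1{12}\int_0^6(6-t)\omega_C(t)\,dt$. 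Substituting $t=2r$, this equals $\int_0^3(1-r/3)\,\omega_C(2r)\,dr$ up to the normalisation in \eqref{eq:height-exact}. The remaining task is to show that this quantity is at least $1$ exactly when $C\ge C_{\mathrm{cos}}$, so that the mechanism of Lemma~\ref{lem:stability} goes through.

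\emph{Step 1 (computing $\omega_C$).} I would compute $O_{\psi_n}(tv)$ for the product density $\rho=\prod_i\frac1C\cos^2(\pi x_i/(2C))$ on the cube. Writing $\min\{a,b\}=\frac{a+b}2-\frac{|a-b|}2$ and using the product structure, $1-O(tv)$ is an $L^1$ distance between two product measures. The key claim is that, among unit $v$, the overlap is smallest in the ``spread'' limit $v=(1,\dots,1)/\sqrt d$, $d\to\infty$. I would argue this via a Schur-concavity or majorization argument in $(v_i^2)$, or, failing that, by a direct comparison showing that splitting a coordinate weight into two lowers the overlap. In that limit the log-likelihood ratio is asymptotically Gaussian by a central-limit argument. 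The per-coordinate Fisher information of $\cos^2$ on $(-C,C)$ equals $\pi^2/C^2$, which explains the term $\pi^2r^2/(2C^2)$. The overlap of two Gaussian-shifted densities is then an explicit Gaussian tail. After integrating by parts in $r$, this should turn $\int_0^3(1-r/3)\,\omega_C(2r)\,dr$ into the left side of \eqref{eq:cosine-constant}, with the kernel $(1-r/3)^2$ and the prefactor $3\sqrt{2\pi}/C$ arising from differentiating the Gaussian tail. Monotonicity of the left side in $C$ then gives uniqueness of $C_{\mathrm{cos}}$. The numerical bracket $(6.9012574,6.9012575)$ comes from a certified quadrature, as in Appendix~\ref{app:existence}.

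\emph{Step 2 (preserving the overlap invariant).} The lift $\Phi(y,s)=\psi(y+sv)/\sqrt6$ preserves $y$-translation overlaps exactly. For the fiberwise symmetric rearrangement I would use the contraction $\|f^*-g^*\|_1\le\|f-g\|_1$ for squared profiles, exactly as $\|\partial_e\Phi^*\|_2\le\|\partial_e\Phi\|_2$ was used in Lemma~\ref{lem:stability}. The hard part is the analogue of Lemma~\ref{lem:section}: passing from a witness on $B^*$ to one on the section $D_1$ while keeping $O(tv)\ge\omega_C(t)$ for all $v$ and $t$ at once.

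\emph{Step 3 (the section step, the main obstacle).} My first attempt is to avoid propagating overlaps directly. Instead I would observe that the entire construction starting from $\psi_n$ is a composition of operations whose effect on translation-stability is dominated by that of $\psi_n$. Equivalently, I would find a family of concave or convex functionals, such as $\lambda_Q$ with $Q$ replaced by Gaussian-weighted directional $L^1$ moduli, to which the Brascamp--Lieb convexity in $s$ and the minimax-in-$Q$ selection of Lemma~\ref{lem:section} still apply. If that fails, I would fall back to a Gaussian-type invariant in the style of GFL: require the witness to be more log-concave than a Gaussian of variance $C^2/\pi^2$ in every direction. Such a condition is preserved by sections and symmetrisations via Prékopa--Leindler, and it also yields $O(tv)$ bounded below by the Gaussian overlap. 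This fallback would make Step 1's worst case automatic, but it is precisely the step where the argument may require new ideas.
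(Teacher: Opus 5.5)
Your Step~2 contractions (lift and fiberwise rearrangement) are correct, and your integration by parts does reproduce \eqref{eq:cosine-constant}. However, Step~3 is a genuine gap, and it carries the whole difficulty: you never construct a witness on the section $(B^*)_1=\Tc_vK$ whose overlap profile still dominates $\omega_C$. The tools behind Lemma~\ref{lem:section} are specific to the quadratic energy. They are the Brascamp--Lieb convexity of $s\mapsto\lambda_Q(D_s)$ and the minimax over $Q$, which uses that $\trace(Q\Ec(f))$ is linear in $Q$. Nothing comparable is available for $\psi\mapsto O_\psi(tv)$. Simply conditioning $(\Phi^*)^2$ on $s=1$ also fails, because only the $s$-average of the slice overlaps is controlled. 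Your fallback has the monotonicity backwards. Being more log-concave than a Gaussian of variance $C^2/\pi^2$ bounds overlaps from above, not from below. For example, a Gaussian of variance $\sigma^2<C^2/\pi^2$ satisfies the hypothesis and has overlap $2\Phi_{\mathrm N}(-t/(2\sigma))<2\Phi_{\mathrm N}(-\pi t/(2C))$. The reverse comparison cannot hold for a density supported in a bounded convex body such as $p_C$.

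The paper removes the section step altogether. It encodes the log-concave density $p=p_C$ as the open convex epigraph $\Cc_0=\{(x,z):z>-\log p(x)\}$ with measure $e^{-z}\,dx\,dz$, so that $\|p-p(\cdot-w)\|_1=\nu_0(\Cc_0\triangle(\Cc_0+(w,0)))$. For $u=u_m$ it passes to the one-sided lift $T=\{(y,h):0<h<\ell(y)\}$ in one more dimension and never slices back. Two identities then do the work of your Step~3:
\begin{itemize}
\item the barycenter of $T$ is $(b_{\Cc},h_*)$ with $h_*=3-\Dc_{\Cc}(u)$, which is $2H$ in your notation;
\item $\widetilde\nu(T\triangle(T+(w,0)))=\int|\ell(y)-\ell(y-w)|\,d\nu\le6\,\nu(\Cc\triangle(\Cc+w))$, so the integrated criterion $\Dc\le1$ is inherited by the remaining vectors.
\end{itemize}
Induction down to zero vectors then needs only that the barycenter of an open convex set lies in it. A point of $T$ at height $h_*\ge2$ has a fiber $J_y$ of length greater than two, so $J_y$ contains $1$ or $-1$, i.e.\ $y+u\in\Cc$ or $y-u\in\Cc$. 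Note that only $\Dc(v_j)$ for the given vectors is propagated, not a pointwise profile in all directions.

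Step~1 is also incomplete. The central limit argument identifies only the spread limit, and the majorization or splitting claim that this limit is the worst case for every $d$ and $h$ is not proved. The paper obtains the uniform bound $\|p_C-p_C(\cdot-h)\|_1\le4\Phi_{\mathrm N}(\pi\|h\|_2/(2C))-2$ for $\|h\|_\infty\le C$ as follows. It writes $\min(p,\widetilde p)=\sqrt{p\widetilde p}\,e^{-|L|/2}$ and expands $e^{-|L|/2}$ as a Fourier integral against the positive kernel $(\xi^2+1/4)^{-1}$. It then bounds each one-dimensional factor pointwise in $\xi$ by $A_\delta(\xi)\ge\exp[-\tfrac12(1+4\xi^2)\delta^2]$.
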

\subsubsection{Averaged sensitivity under translations}\label{sec:cosine-signing}
For a probability density $p$ on $\reals^d$ and a vector
$u\in\reals^d$, define 
\begin{equation}\label{eq:cosine-loss}
 \Dc_p(u):=\frac1{12}\int_0^6(6-t)
                  \|p-p(\cdot-tu)\|_1\,dt.
\end{equation}
For $p=\psi^2$, the identity~\eqref{eq:height-exact} then becomes
\[
 H=\sint |s|(\Phi^*)^2=\frac32-\frac12\Dc_p(v).
\]
Thus  $\Dc_p(v)\le1$ is equivalent to the moment condition
$H\ge1$ required by Lemma~\ref{lem:section}.
The criterion below is closely related to the ``near invariance to
signed sums'' argument of Karingula and Lovett~\cite{karingula2026elementaryproofkomlosconjecture}. They work
with the shift distance $ \Delta_p(h)
 :=d_{\mathrm{TV}}\!\bigl(p,p(\cdot-h)\bigr)
 =\frac12\|p-p(\cdot-h)\|_1$ at a single translation and show that their
splitting operation does not increase this distance in the remaining
directions. Here the height identity ~\eqref{eq:height-exact} naturally leads instead to the
weighted integral of shift distances in~\eqref{eq:cosine-loss}.

\begin{lemma}[Signing from integrated translation loss]\label{lem:cosine-signing}
Let $K\subset\reals^d$ be bounded, open, and convex, and let $p$ be
an even log-concave probability density, positive on $K$ and zero
outside $K$, with $\int_K p|\log p|<\infty$.
If $\Dc_p(v_j)\le1$ for every $j$, then
$\sum_j\eps_jv_j\in K$ for some signs $\eps_j\in\{-1,1\}$.
\end{lemma}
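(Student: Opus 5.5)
We mimic the backward induction of Theorem~\ref{thm:main}. The invariant is no longer the energy bound~\eqref{eq:E}. Instead, the convex set $K_j$ must carry an even log-concave probability density $p_j$, positive exactly on $K_j$, with finite entropy, and satisfying
\[
\Dc_{p_j}(v_i)\le 1\qquad\text{for all } i\le j .
\]
We start from $K_n=K$ and $p_n=p$. If the invariant propagates to $K_{j-1}=\Tc_{v_j}K_j$ for every $j$, then every $K_j$ is nonempty and symmetric, so $0\in K_0$. The signs are then recovered forward exactly as in the proof of Theorem~\ref{thm:main}, using $\Tc_vK\subseteq(K-v)\cup(K+v)$. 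So everything reduces to a one-step lemma, the analogue of Lemma~\ref{lem:stability}. It asserts that if $p$ is an admissible density on $K$ with $\Dc_p(v)\le1$, then $\Tc_vK$ carries an admissible density $p'$ with $\Dc_{p'}(u)\le\Dc_p(u)$ for every $u\in\reals^d$. Monotonicity in every direction $u$ is what lets all remaining hypotheses $\Dc(v_i)\le1$, $i<j$, survive the step.

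\textbf{Lift and rearrangement.} For the one-step lemma we lift as before. Set $B=\{(y,s):|s|<3,\ y+sv\in K\}$ and $q(y,s)=\tfrac16 p(y+sv)\one_{(-3,3)}(s)$. We then take the fiberwise symmetric decreasing rearrangement $q^*$ in $s$, which is supported in $B^*$, and by the computation above $(B^*)_1=\Tc_vK$. Three facts are needed here.
\begin{itemize}
\item $q^*$ is log-concave and even. Log-concavity of $q$ is inherited from $p$ and the indicator of an interval; that it survives Steiner-type symmetrization along the $s$-fibres is classical (Brascamp--Lieb--Luttinger).
\item The height identity~\eqref{eq:height-exact} gives $\sint|s|q^*=\tfrac32-\tfrac12\Dc_p(v)\ge1$.
\item For every $u$ and $t$, one has $\|q^*-q^*(\cdot-tu,\cdot)\|_1\le\|q-q(\cdot-tu,\cdot)\|_1=\|p-p(\cdot-tu)\|_1$. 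This is the $L^1$-contraction of rearrangement applied fibre by fibre, as in the contraction step of Lemma~\ref{lem:stability}. The final equality holds because the lift commutes with translations in $y$.
\end{itemize}
Integrating the third fact against $(6-t)\,dt/12$ shows that $q^*$ has $y$-translation loss at most $\Dc_p(u)$ in every direction $u$.

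\textbf{Passing to the section $s=1$.} This is the analogue of Lemma~\ref{lem:section}, and I expect it to be the main obstacle. The natural candidate is the normalized section $p'(y)=q^*(y,1)/\int q^*(\cdot,1)$. It is even, log-concave, and positive exactly on $(B^*)_1=\Tc_vK$, so admissibility is automatic. What is not automatic is comparing its translation loss with the \emph{average} over $s$ weighted by the marginal $m(s)=\int q^*(y,s)\,dy$.

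I would attempt a convexity argument parallel to~\eqref{eq:section}. The plan is to show, for fixed $u$, that $s\mapsto\Dc_{q^*(\cdot,s)/m(s)}(u)$ is even and nondecreasing in $|s|$ and satisfies a Jensen-type inequality against the law $m(s)\,ds$. Together with $\sint|s|m\ge1$, this would give $\Dc_{p'}(u)\le\sint m(s)\,\Dc_{q^*(\cdot,s)/m(s)}(u)\,ds$. Since $q^*$ is already fiberwise rearranged, the $L^1$ distance of translates splits over $s$, so this average is at most $\Dc_p(u)$ by the previous paragraph.

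If monotonicity of the section losses fails for arbitrary log-concave sections, the fallback is to replace the raw section by a variational choice, as in Lemma~\ref{lem:section}. I would take $p'$ to be the entropy-maximizing (or minimal relative-entropy) log-concave density on $D_1$ subject to linear constraints. Prékopa--Leindler would then supply the convexity in $s$ of the optimal value, playing the role Brascamp--Lieb played for $\lambda_Q$. The finiteness assumption $\int_Kp|\log p|<\infty$ is what makes this entropy functional well defined and allows a limiting/compactness argument as in Appendix~\ref{app:technical}.
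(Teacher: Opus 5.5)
Your reduction to a one-step lemma and your three facts about $q^*$ are fine. The passage to the section $s=1$, however, is a genuine gap, and not just an unproved step: the raw section fails the inequality you need. The sections $D_\sigma=\{\ell>2\sigma\}$ of $B^*$ are nested, but nested convex sets need not have normalized translation loss that grows as they shrink.

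Here is a counterexample. Take $d=3$, $v=e_3$, $u=e_2$, $\xi=1-|x_1|/L$, and $p$ uniform on $K=\{x:|x_1|/L+|x_2|/a<1,\ |x_3|<\tfrac{c_0}{2}\xi\}$.
Then $q^*$ is uniform on $B^*$. Its section at height $\sigma$ is uniform on $D_\sigma=\{y:|y_1|/L+|y_2|/a<1,\ c_0\xi>2\sigma,\ |y_3|<\tfrac{c_0}{2}\xi+3-2\sigma\}$, so $D_1=\Tc_vK$. Let $p_\sigma$ be the uniform density on $D_\sigma$ and $g(\sigma)=\Dc_{p_\sigma}(u)$.
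The $s$-fibres of $B$ over $y$ and $y-tu$ are either equal or one of them is empty, so rearrangement loses nothing, and $\Dc_{q^*}((u,0))=\Dc_p(u)=\int m(s)\,g(|s|)\,ds$.
Under $p_\sigma$, the variable $\xi$ has law proportional to $\xi(c_0\xi+6-4\sigma)\,d\xi$ on $\{\xi>2\sigma/c_0\}$, and the $u$-chord has length $2a\xi$. Raising $\sigma$ moves this law toward long chords, so $g$ is \emph{decreasing} in $\sigma$. That is the opposite of what your Jensen step needs.
For large $c_0$, $g$ is affine in $\sigma$ up to $O(c_0^{-2})$ with slope of order $-1/c_0$, and $m$ is uniform on $(-3,3)$ up to $O(1/c_0)$. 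Hence $\int m\,g=g(3/2)+O(c_0^{-2})<g(1)=\Dc_{p'}(u)$.
Now choose $a$ with $\Dc_p(u)=1$, and $c_0$ large, which makes $\Dc_p(v)$ small. This gives $\Dc_{p'}(u)>1$, so with $v_1=u$, $v_2=v$ your invariant breaks after one step.

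The entropy fallback does not repair this. Translation loss is an $L^1$ quantity that is not controlled by entropy or by linear constraints. Nothing in your setup plays the role of the quadratic structure and the minimax over $Q$ that let Lemma~\ref{lem:section} produce a single witness good in all directions.

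The paper's proof never builds a density on a section. It encodes $p$ as the open convex set $\Cc_0=\{(x,z):x\in K,\ z>-\log p(x)\}$ with measure $e^{-z}\,dx\,dz$, which is invariant under translations with zero $z$-component. It also tracks the barycenter instead of relying on symmetry.
For the last vector $u_m$ it lifts, without symmetrizing, to the hypograph $T=\{(y,h):0<h<\ell(y)\}$ of the fibre length. The barycenter of $T$ is $(b_{\Cc},h_*)$ with $h_*=3-\Dc_{\Cc}(u_m)\ge2$. The bound $\int|\ell(y)-\ell(y-w)|\,d\nu\le6\,\nu(\Cc\triangle(\Cc+w))$ shows that the lift does not increase $\Dc$ in the other directions.
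The induction hypothesis is then applied in $T$, one dimension higher, to $u_1,\ldots,u_{m-1}$; the base case is that the barycenter of an open convex set lies in it. This yields a point $(y,h_*)\in T$, so $\ell(y)>2$, and therefore $y+u_m$ or $y-u_m$ lies in $\Cc$.
The moment condition is used only through the height of that one point, so the section step where your argument breaks is never needed.
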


\begin{proof}
We first encode the density as the open convex epigraph of its log-potential,
\[
 \Cc_0=\{(x,z):x\in K,\ z>-\log p(x)\},\qquad
 d\nu_0=e^{-z}\,dx\,dz.
\]
 Then, we have that  $\nu_0(\Cc_0)=1$ and that
its $x$-barycenter is zero $\pi_x(b_{\mathcal C_0})
=\int_{\mathcal C_0} x\,d\nu_0
=\int_K x\,p(x)\,dx
=0$.
Moreover, note that, 
\[
 \nu_0\bigl(\Cc_0\triangle(\Cc_0+(w,0))\bigr)
       =\|p-p(\cdot-w)\|_1,
\]
where $\triangle$ denotes symmetric set difference.

Now, consider auxiliary coordinates $\eta\in\reals^m$ and use
$d\nu=e^{-z}\,dx\,dz\,d\eta$.
For an open convex set $\Cc$ of finite mass and finite first
moment, write $b_{\Cc}$ for its barycenter and, whenever $u$ has zero
$z$ coordinate, let
\[
 \Dc_{\Cc}(u)=\frac1{12\nu(\Cc)}
       \int_0^6(6-t)\nu\bigl(\Cc\triangle(\Cc+tu)\bigr)\,dt.
\]
We show by induction that $\Dc_{\Cc}(u_j)\le1$ for all $j$ implies that
$b_{\Cc}+\sum_j\eps_ju_j\in\Cc$.
More specifically, we prove the assertion by induction on the number $m$ of vectors.
If $m=0$, the desired conclusion is
simply $ b_{\Cc}\in\Cc.$
Indeed, if the barycenter did not belong to the open convex set
$\Cc$, a separating or supporting hyperplane would give a linear
functional $\ell$ and a constant $a$ such that
$\ell(y)<a$ for every $y\in\Cc$, while
$\ell(b_{\Cc})\ge a$. But this would contradict $  \ell(b_{\Cc})
    =\frac{1}{\nu(\Cc)}
      \int_{\Cc}\ell(y)\,d\nu(y)
    <a.$

Now suppose $m\ge1$ and set $u=u_m$. 
Let $y$ denote a point in the current ambient space of $\Cc$, and
define
\[
    J_y:=\{s\in(-3,3):y+su\in\Cc\},
    \qquad
    \ell(y):=|J_y|,
\]
together with the lifted set
\[
    T:=\{(y,h):0<h<\ell(y)\}.
\]
Then, as in the proof of Lemma \ref{lem:stability} we have that $\ell$ is concave and lower semicontinuous, and therefore $T$
is open and convex.
Equip $T$ with $d\widetilde\nu=d\nu\,dh$. Then, we have that,
\[
 \widetilde\nu(T)=6\nu(\Cc),\qquad
 b_T=(b_{\Cc},h_*),\qquad
 h_*=\frac{\int\ell(y)^2\,d\nu(y)}{12\nu(\Cc)}.
\]
Moreover, note that, 
\[
 \int\ell^2\,d\nu
 =2\int_0^6(6-t)\nu\bigl(\Cc\cap(\Cc+tu)\bigr)\,dt.
\]
Using $\nu(\Cc\cap(\Cc+tu))
=\nu(\Cc)-\tfrac12\nu(\Cc\triangle(\Cc+tu))$, we obtain $ h_*=3-\Dc_{\Cc}(u)\ge2.$

Now, for every $w$ with zero $z$ coordinate, we obtain,
\begin{align*}
 \widetilde\nu\bigl(T\triangle(T+(w,0))\bigr)
 &=\int|\ell(y)-\ell(y-w)|\,d\nu(y)\\
 &\le6\nu\bigl(\Cc\triangle(\Cc+w)\bigr),
\end{align*}
and therefore
$\Dc_T((w,0))\le\Dc_{\Cc}(w)$. 
For each $j<m$, extend $u_j$ to $\bar u_j:=(u_j,0)$
in the enlarged space containing $T$. Since the lift does not
increase the average shift distance $\Dc_T(\bar u_j)\le \Dc_{\Cc}(u_j)\le1$ for $\qquad j=1,\ldots,m-1$,
we may therefore apply the induction hypothesis  in $T$ to collection of vectors
$\bar u_1,\ldots,\bar u_{m-1}$. Therefore, using that  $  b_T=(b_{\Cc},h_*),$
there are signs $\eps_1,\ldots,\eps_{m-1}$ such that
\[
    \left(
        b_{\Cc}+\sum_{j=1}^{m-1}\eps_j u_j,\,
        h_*
    \right)
    =
    b_T+\sum_{j=1}^{m-1}\eps_j\bar u_j
    \in T.
\]
Let $ y:=b_{\Cc}+\sum_{j=1}^{m-1}\eps_j u_j.$
Since $(y,h_*)\in T$, we then have $    \ell(y)>h_*\ge2$, so that the interval $ J_y=\{s\in(-3,3):y+su\in\Cc\}$
has length greater than two. However, any interval of length greater than
two which is also contained in $(-3,3)$ has to  contain at least one of $-1$ and $1$.
Therefore, we obtain that $ y-u\in\Cc$ or $y+u\in\Cc$, so that $ b_{\Cc}+\sum_{j=1}^m\eps_j u_j\in\Cc$  for some 
 $\eps_m=\pm1$,
which completes the induction step.

Finally, apply the above argument to    $\Cc = \Cc_0$ and $u_j=(v_j,0)$, and using that 
 the $x$-coordinate of $b_{\Cc_0}$ is zero, we obtain that $\sum_j\eps_jv_j\in K$ for some signs $\eps_j\in\{-1,1\}$.
\end{proof}

\subsubsection{A Gaussian comparison for the cosine density}\label{sec:cosine-comparison}
The coloring criterion of Lemma \ref{lem:cosine-signing} now reduces  the existence of a coloring
to bounding $\Dc_p(u)$ for a density on a small cube.
We use the square of the product-cosine function which we also used in Theorem \ref{thm:main},
\begin{equation}\label{eq:cosine-density}
 q_C(x)=C^{-1}\cos^2\!\left(\frac{\pi x}{2C}\right)
                   \one_{\{|x|<C\}},\qquad p_C=q_C^{\otimes d}.
\end{equation}

\begin{prop}[Gaussian  comparison]\label{prop:cosine-translation}
Let $C>0$ and $h\in\reals^d$ with $\|h\|_\infty\le C$. Then, we have,
\begin{equation}\label{eq:cosine-TV}
 \|p_C-p_C(\cdot-h)\|_1
 \le4\Phi_{\mathrm N}\!\left(\frac{\pi\|h\|_2}{2C}\right)-2,
\end{equation}
where $\Phi_{\mathrm N}$ denotes the standard normal distribution function.
Moreover, for fixed $C$ and $\|h\|_2$, the bound is asymptotically attained as
 $d\to\infty$ by taking $h =r/\sqrt{d}(1,\dots,1) $.
\end{prop}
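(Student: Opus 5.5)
The plan is to read \eqref{eq:cosine-TV} as a statement about Gaussian domination of binary experiments, that is, in the language of Gaussian differential privacy. The right-hand side $4\Phi_{\mathrm N}(\mu/2)-2$ with $\mu=\pi\|h\|_2/C$ is exactly $\|\gamma-\gamma(\cdot-\mu)\|_1$ for the standard normal density $\gamma$ on $\reals$. The scaling is also the natural one: $\sqrt{q_C}$ has $\|(\sqrt{q_C})'\|_2^2=(\pi/2C)^2$, so $q_C$ has Fisher information $\pi^2/C^2$. A Gaussian with that Fisher information has standard deviation $C/\pi$, so a shift by $h_i$ corresponds to a standardized shift $\pi h_i/C$.

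I would therefore prove three things in order.
\begin{enumerate}
\item A one-dimensional domination. For each $|h_i|\le C$, the pair $(q_C,q_C(\cdot-h_i))$ is harder to distinguish than $(\gamma,\gamma(\cdot-\mu_i))$ with $\mu_i=\pi|h_i|/C$. In tradeoff-function terms, every test has type II error at least the Gaussian tradeoff $G_{\mu_i}(\alpha)=\Phi_{\mathrm N}(\Phi_{\mathrm N}^{-1}(1-\alpha)-\mu_i)$.
\item Composition. Tensorizing dominations gives that $(p_C,p_C(\cdot-h))$ is dominated by the product of the one-dimensional Gaussian experiments. This is the composition theorem for $\mu$-GDP of Dong, Roth and Su, or directly Blackwell's theorem, since each domination is realized by a Markov kernel and kernels tensorize. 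The product Gaussian experiment is a shift of $N(0,I)$ by $(\mu_i)_i$, which is equivalent to a one-dimensional shift by $\sqrt{\sum\mu_i^2}=\pi\|h\|_2/C$.
\item Conversion to $L^1$. Total variation is $\sup_\alpha(1-\alpha-\beta(\alpha))$ over the tradeoff curve, so $\|p_C-p_C(\cdot-h)\|_1\le 2(2\Phi_{\mathrm N}(\mu/2)-1)$, which is \eqref{eq:cosine-TV}.
\end{enumerate}

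For step (1), I use that $q_C$ is log-concave, since $-(\log q_C)''=\tfrac{\pi^2}{2C^2}\sec^2(\pi x/2C)>0$. Hence the location family has monotone likelihood ratio, and by Neyman--Pearson the optimal tests are thresholds. The tradeoff curve is then parametrized by $t\mapsto(1-F(t),\,F(t-h_i))$, where $F$ is the CDF of $q_C$. Domination by $G_{\mu_i}$ is equivalent to $\Phi_{\mathrm N}^{-1}(F(t))-\Phi_{\mathrm N}^{-1}(F(t-h_i))\le\mu_i$ for all $t$. Setting $T=\Phi_{\mathrm N}^{-1}\circ F$, it suffices to show $T'\le\pi/C$, i.e.
\[
 q_C(x)\le \tfrac{\pi}{C}\,\varphi_{\mathrm N}\bigl(\Phi_{\mathrm N}^{-1}(F(x))\bigr)\qquad(|x|<C),
\]
where $\varphi_{\mathrm N}=\Phi_{\mathrm N}'$. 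This is an isoperimetric-profile comparison between $q_C$ and $N(0,C^2/\pi^2)$.

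After rescaling to $C=\pi/2$, so that $q(x)=\tfrac{2}{\pi}\cos^2 x$ on $(-\pi/2,\pi/2)$, this becomes a concrete one-variable inequality. It holds with equality to second order at $x=0$, because the Fisher informations match. Near the endpoints, $q\sim c\,(\pi/2-|x|)^2$ while $1-F\sim c'(\pi/2-|x|)^3$, so the Gaussian profile, which is of order $u\sqrt{2\log(1/u)}$, dominates comfortably. I expect this inequality to be the main obstacle. I would first try to show that $T$ is concave on $[0,C)$, which gives $T'\le T'(0)=\pi/C$ at once. Concavity amounts to a differential inequality comparing $-(\log q)'$ with the Gaussian hazard term $\varphi_{\mathrm N}(T)/(1-\Phi_{\mathrm N}(T))$ along the curve. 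Failing that, I would verify the inequality directly: by Taylor expansion near $0$, by asymptotics near the endpoint, and by a certified interval computation on the middle range, in the spirit of the numerical certificates mentioned in Appendix~\ref{app:existence}.

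The hypothesis $\|h\|_\infty\le C$ only ensures that each coordinate shift keeps the supports overlapping. If some $|h_i|\ge C$, that coordinate alone already gives total variation $1$, so nothing is lost.

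For sharpness, take $h=\tfrac{r}{\sqrt d}(1,\dots,1)$. The log-likelihood ratio is a sum of $d$ i.i.d.\ terms, each of size $O(1/\sqrt d)$. By the central limit theorem for composition of experiments (Dong--Roth--Su), or by LAN theory with Fisher information $\pi^2/C^2$ per coordinate, the experiment converges to the Gaussian shift with $\mu=\pi r/C$. The $L^1$ distance therefore converges to the right-hand side of \eqref{eq:cosine-TV}.
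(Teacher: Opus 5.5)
Your Step~(1) is not merely the hard part: it is false for every finite $\mu_i$. Since $q_C$ vanishes off $(-C,C)$, for $0<h_i<2C$ the interval $[C,C+h_i)$ has $q_C$-mass zero but $q_C(\cdot-h_i)$-mass $1-F(C-h_i)>0$. The test that rejects exactly on this interval has type~I error $0$ and type~II error $F(C-h_i)<1$, whereas $G_\mu(0)=1$ for every finite $\mu$. In your parametrization, $T=\Phi_{\mathrm N}^{-1}\circ F$ maps the bounded interval $(-C,C)$ onto $\reals$. Hence $T(t)-T(t-h_i)\to\infty$ as $t\uparrow C$, and no bound $T'\le\pi/C$ is possible.

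Both of your local checks are also off. At the origin there is no second-order contact: $q_C(0)=1/C$, while $(\pi/C)\varphi_{\mathrm N}(0)=\sqrt{\pi/2}/C$. Matching Fisher information only governs the limit of many small shifts, not the profile at the median. At the endpoint, with $\epsilon=C-|x|$, you have $q_C\asymp\epsilon^2$, but the Gaussian profile at $u\asymp\epsilon^3$ is only of order $\epsilon^3\sqrt{\log(1/\epsilon)}$. So $q_C$ wins there and $T'\to\infty$. Gaussian tradeoff domination forces mutual absolute continuity, which shifted compactly supported densities never have. There is therefore no Blackwell kernel to tensorize in Step~(2), and the one-dimensional total-variation bound, though true, does not tensorize by itself.

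The missing idea is a functional of the pair that is multiplicative over coordinates, ignores mass outside the common support, and controls $\int\min(p,\widetilde p)$ monotonically. The paper writes $\min(p,\widetilde p)=\sqrt{p\widetilde p}\,e^{-|L|/2}$, with $L=\log(p/\widetilde p)$ and $e^{-|L|/2}=\frac1{2\pi}\int_{\reals}e^{i\xi L}(\xi^2+1/4)^{-1}\,d\xi$. This gives
\[
 \int\min\{p_C,p_C(\cdot-h)\}=\frac1{2\pi}\int_{\reals}\frac{\prod_jA_{\delta_j}(\xi)}{\xi^2+1/4}\,d\xi,
\]
and the factor $\sqrt{p\widetilde p}$ removes precisely the singular parts that break domination. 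Each factor is then compared with its exact Gaussian value, $A_\delta(\xi)\ge e^{-(1+4\xi^2)\delta^2/2}$, via the closed form of Lemma~\ref{lem:cosine-transform}. Positivity of the factors and of the kernel lets these bounds be multiplied and integrated.

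This is where $\|h\|_\infty\le C$ enters, as $\delta_j\le\pi/2$; it is not an overlap condition. Your remark that $|h_i|\ge C$ already forces total variation one is wrong, since the supports overlap whenever $|h_i|<2C$. Your LAN argument for the sharpness claim is acceptable: the family is differentiable in quadratic mean with Fisher information $\pi^2/C^2$, and the off-support mass is $O(d^{-1/2})$. But it cannot substitute for the inequality itself.
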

For the proof of Proposition \ref{prop:cosine-translation} we will need the following technical lemma. 
\begin{lemma}[Cosine transform]\label{lem:cosine-transform}
Let $q(x)=(2/\pi)\cos^2x\,\one_{\{|x|<\pi/2\}}$. For
$0\le\delta<\pi$, define
\[
 A_\delta(\xi)=\int_{-(\pi-\delta)/2}^{(\pi-\delta)/2}
   q(x+\delta/2)^{1/2+i\xi}
   q(x-\delta/2)^{1/2-i\xi}\,dx,\qquad \xi\in\reals.
\]
Then, for  $b=2|\xi|$ we have that, 
\begin{equation}\label{eq:cosine-transform}
 A_\delta(\xi)=
 \frac{\cos\delta\,\sinh(b(\pi-\delta))
       +b\sin\delta\,\cosh(b(\pi-\delta))}{\sinh(\pi b)},
\end{equation}
while at $b=0$ the expression \ref{eq:cosine-transform} extends continuously to
$A_\delta(0)=(1-\delta/\pi)\cos\delta+\sin\delta/\pi$.
Moreover, for $0\le\delta\le\pi/2$ we have,
\begin{equation}\label{eq:cosine-transform-bound}
 A_\delta(\xi)\ge
 \exp\!\left[-\tfrac12(1+4\xi^2)\delta^2\right]>0.
\end{equation}
\end{lemma}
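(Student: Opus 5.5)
The plan is to compute $A_\delta(\xi)$ in closed form by a change of variables that turns it into a Fourier-type integral. I then get the lower bound from a differential inequality in $\delta$, using that the closed form has a remarkably simple $\delta$-derivative.

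\emph{Closed form.} On the interval of integration both $\cos(x\pm\delta/2)$ are positive, so the integrand equals
\[
\tfrac{2}{\pi}\cos(x+\tfrac\delta2)\cos(x-\tfrac\delta2)\Bigl(\tfrac{\cos(x+\delta/2)}{\cos(x-\delta/2)}\Bigr)^{2i\xi}
=\tfrac1\pi(\cos2x+\cos\delta)\,e^{-4i\xi w},
\]
where $w=w(x)$ is defined by $\tanh w=\tan x\tan(\delta/2)$. Indeed,
\[
\frac{\cos(x+\delta/2)}{\cos(x-\delta/2)}=\frac{1-\tan x\tan(\delta/2)}{1+\tan x\tan(\delta/2)}=e^{-2w}.
\]
As $x$ runs over $(-(\pi-\delta)/2,(\pi-\delta)/2)$, $w$ increases monotonically over all of $\reals$. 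So $A_\delta(\xi)=\int_\reals\rho_\delta(w)e^{-4i\xi w}\,dw$ with an explicit even weight $\rho_\delta$, built from hyperbolic functions of $w$ with coefficients depending on $\delta$.

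I would evaluate this transform by a rectangular contour shift $w\mapsto w+i\pi/2$ or by residues, which is the standard route to $\sinh$/$\cosh$ quotients. Since $\rho_\delta$ is even, the answer depends only on $b=2|\xi|$. This is the step I expect to be the main obstacle: identifying $\rho_\delta$ cleanly and organizing the residue computation so that it lands exactly on~\eqref{eq:cosine-transform}.

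As a sanity check, and to confirm the $b=0$ extension, I would use the direct computation
\[
A_\delta(0)=\tfrac1\pi\int_{-(\pi-\delta)/2}^{(\pi-\delta)/2}(\cos2x+\cos\delta)\,dx=\tfrac{\sin\delta+(\pi-\delta)\cos\delta}{\pi},
\]
which matches the $b\to0$ limit of~\eqref{eq:cosine-transform}. A fallback is to verify~\eqref{eq:cosine-transform} by checking that both sides satisfy the same second-order ODE in $\delta$ with the same data at $\delta=0$, where $A_0=1$.

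\emph{Lower bound.} Write $N(\delta)=\cos\delta\,S+b\sin\delta\,Ch$ with $S=\sinh(b(\pi-\delta))$ and $Ch=\cosh(b(\pi-\delta))$, so that $A_\delta=N(\delta)/\sinh(\pi b)$ and $A_0=1$. A direct differentiation gives the cancellation
\[
N'(\delta)=-(1+b^2)\sin\delta\,\sinh(b(\pi-\delta)).
\]
It therefore suffices to show $\sin\delta\,S\le\delta N(\delta)$ on $[0,\pi/2]$. Then $(\log A_\delta)'\ge-(1+b^2)\delta$, and integrating from $0$ gives $A_\delta\ge\exp[-\tfrac12(1+b^2)\delta^2]$ with $1+b^2=1+4\xi^2$. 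Positivity of $N$ follows along the way, because the lower bound below is positive.

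To prove the needed inequality, use $\cosh y\ge\sinh y$ together with $\coth y\ge 1/y$. These give $Ch\ge S/(b(\pi-\delta))$, hence
\[
N\ge S\Bigl(\cos\delta+\frac{\sin\delta}{\pi-\delta}\Bigr).
\]
The claim then reduces to the elementary inequality $\sin\delta\le\delta\cos\delta+\delta\sin\delta/(\pi-\delta)$ for $\delta\in[0,\pi/2]$, which holds with equality at $\delta=0$ and at $\delta=\pi/2$. I would prove it by rewriting it as $(\pi-2\delta)\sin\delta\le\delta(\pi-\delta)\cos\delta$ and comparing derivatives, or via a concavity argument. The case $b=0$ follows by continuity.
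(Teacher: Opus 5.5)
Your proposal is correct and follows the paper's proof essentially step for step. The closed form uses the same substitution $\tan x=\tanh w/\tan(\delta/2)$, which turns $A_\delta(\xi)$ into a Fourier integral evaluated by residues. The lower bound uses the same identity $N'(\delta)=-(1+b^2)\sin\delta\,\sinh(b(\pi-\delta))$, the same bound $y\coth y\ge1$, and the same elementary inequality $(\pi-2\delta)\sin\delta\le\delta(\pi-\delta)\cos\delta$.

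Two pointers on steps you left open. For the residue step, the weight is $\rho_\delta(w)=\frac{2}{\pi}\sin^3\theta\cos^3\theta\,(\sinh^2w+\sin^2\theta)^{-2}$ with $\theta=\delta/2$. The paper avoids computing residues at these double poles: it evaluates $\int_{\reals}e^{i\omega u}(\sinh^2u+\sin^2\theta)^{-1}\,du$ over the rectangle of height $\pi$, which has simple poles at $i\theta$ and $i(\pi-\theta)$, and then differentiates in $\theta$. For the elementary inequality, a plain derivative comparison from $\delta=0$ is not enough. The difference of the two sides has derivative $(2-\delta(\pi-\delta))\sin\delta$, which changes sign once, so you need that fact together with the equality at both endpoints.
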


\begin{proof}
In Appendix~\ref{sec:cosine-evaluation} we derive
\eqref{eq:cosine-transform}; here we establish \eqref{eq:cosine-transform-bound}. To this end, note that for $b>0$,  numerator in \eqref{eq:cosine-transform} is positive on
$0\le\delta\le\pi/2$ and has derivative
$-(1+b^2)\sin\delta\,\sinh(b(\pi-\delta))$. Hence
\[
 -\frac{d}{d\delta}\log A_\delta(\xi)
 =\frac{1+b^2}{\cot\delta+b\coth(b(\pi-\delta))}
 \le(1+b^2)\delta\qquad(0<\delta\le\pi/2).
\]
Using that 
$y\coth y\ge1$ for $y>0$, we have $ b\coth\bigl(b(\pi-\delta)\bigr)
 \ge \frac1{\pi-\delta}.$
Hence it is enough to show that
\begin{equation}\label{eq:cot-bound}
 \cot\delta+\frac1{\pi-\delta}\ge\frac1\delta,
 \qquad 0<\delta\le\frac{\pi}{2}.
\end{equation}
Indeed, multiplying~\eqref{eq:cot-bound} by the positive quantity
$\delta(\pi-\delta)\sin\delta$ shows that ~\eqref{eq:cot-bound}  is equivalent to
\[
 f(\delta):=
 \delta(\pi-\delta)\cos\delta
 -(\pi-2\delta)\sin\delta
 \ge0.
\]
Now, note that  $f(0)=f(\pi/2)=0$, while $ f'(\delta)
 =
 \bigl(2-\delta(\pi-\delta)\bigr)\sin\delta. $
Moreover, the function $\delta(\pi-\delta)$ is strictly increasing on
$(0,\pi/2)$, from $0$ to $\pi^2/4>2$. Thus $f'$ is positive up to
a single point and negative thereafter. Consequently $f$ increases
from $0$ and then decreases back to $0$, so $f(\delta)\ge0$ on
$[0,\pi/2]$, which proves~\eqref{eq:cot-bound}.

Therefore using $\cot\delta+b\coth\bigl(b(\pi-\delta)\bigr)
 \ge \frac1\delta$ we obtain,
\[
 -\frac{d}{d\delta}\log A_\delta(\xi)
 =
 \frac{1+b^2}
 {\cot\delta+b\coth(b(\pi-\delta))}
 \le (1+b^2)\delta.
\]
Since $A_0(\xi)=1$, integration from $0$ to $\delta$ yields $ \log A_\delta(\xi)
 \ge -\frac12(1+b^2)\delta^2.$ But since  $b=2|\xi|$, we obtain
\[
 A_\delta(\xi)
 \ge
 \exp\!\left[-\frac12(1+4\xi^2)\delta^2\right],
\]
which proves~\eqref{eq:cosine-transform-bound}.

\end{proof}
\begin{proof}[Proof of Proposition~\ref{prop:cosine-translation}]
Let $\delta_j=\pi|h_j|/(2C)\le\pi/2$ and
$r^2=\sum_j\delta_j^2$.  Moreover, let $L=\log(p/\widetilde p)$. Then
\begin{equation}\label{eq:cosine-fourier}
 \min(p,\widetilde p)=\sqrt{p\widetilde p}\,e^{-|L|/2},
 \qquad
 e^{-|L|/2}=\frac1{2\pi}\int_{\reals}
               \frac{e^{i\xi L}}{\xi^2+1/4}\,d\xi.
\end{equation}
Therefore,
\begin{align}
 \int\min\{p_C(x),p_C(x-h)\}\,dx
 &=\frac1{2\pi}\int_{\reals}
       \frac{\prod_j A_{\delta_j}(\xi)}{\xi^2+1/4}\,d\xi\notag\\
 &\ge\frac1{2\pi}\int_{\reals}
       \frac{e^{-(1+4\xi^2)r^2/2}}{\xi^2+1/4}\,d\xi
 =2\Phi_{\mathrm N}(-r).\label{eq:cosine-overlap}
\end{align}
Since $\|p-\widetilde p\|_1=2-2\int\min(p,\widetilde p)$,
this proves~\eqref{eq:cosine-TV}.

We finally show that the bound in ~\eqref{eq:cosine-TV} is asymptotically sharp. To this end,  fix $r\ge0$
and and let $   \delta_1=\cdots=\delta_d=\frac{r}{\sqrt d}.$
Then $\sum_j\delta_j^2=r^2$ remains fixed as $d\to\infty$. For each
fixed $\xi$, we use the following expansion of ~\eqref{eq:cosine-transform} 
\[
    A_\delta(\xi)
    =
    1-\frac12(1+4\xi^2)\delta^2+O_\xi(\delta^3)
    \qquad (\delta\to0).
\]
Then, substituting $\delta=r/\sqrt d$ we obtain $ A_{r/\sqrt d}(\xi)
    =
    1-\frac{(1+4\xi^2)r^2}{2d}
      +O_\xi(d^{-3/2}),$
and hence $ A_{r/\sqrt d}(\xi)^d
    \longrightarrow
    \exp\!\left[-\frac12(1+4\xi^2)r^2\right].$
Thus the product appearing in~\eqref{eq:cosine-overlap} converges
pointwise to the Gaussian integrand. Moreover, since
$|A_\delta(\xi)|\le1$, the integrand is dominated by the integrable
function $ \frac{1}{2\pi(\xi^2+1/4)}$, and therefore  dominated convergence  yields $
    \int\min\{p_C(x),p_C(x-h)\}\,dx
    \longrightarrow 2\Phi_{\mathrm N}(-r).$
\end{proof}

\subsubsection{The resulting constant}\label{sec:cosine-constant}
Define
\[
 G(k)=\frac1{12}\int_0^6(6-t)
           \bigl(4\Phi_{\mathrm N}(kt/2)-2\bigr)\,dt,\qquad k\ge0.
\]
Then  $G$ is continuous and strictly increasing on $[0,\infty)$,
with $G(0)=0$  and $\lim_{k\to\infty}G(k)=3.$ Using 
integration by parts, we obtain,
\begin{equation}\label{eq:cosine-G}
 G(k)=3k\sqrt{2/\pi}
       \int_0^3(1-r/3)^2e^{-k^2r^2/2}\,dr,
\end{equation}
where we set  $t=2r$. 
Therefore $G(k_*)=1$ has a unique solution, and
$C_{\mathrm{cos}}=\pi/k_*$ satisfies~\eqref{eq:cosine-constant}.

\begin{proof}[Proof of Theorem~\ref{thm:cosine}]
Let $p_{C_{\mathrm{cos}}}$ be defined as in~\eqref{eq:cosine-density}.
For $\|v\|_2\le1$ and $0\le t\le6$, we have that
$\|tv\|_\infty\le6<C_{\mathrm{cos}}$, and therefore using 
Proposition~\ref{prop:cosine-translation} we obtain that
\[
 \Dc_{p_{C_{\mathrm{cos}}}}(v)
 \le G\!\left(\frac{\pi\|v\|_2}{C_{\mathrm{cos}}}\right)
 \le G\!\left(\frac{\pi}{C_{\mathrm{cos}}}\right)=1.
\]
Finally, since $p_{C_{\mathrm{cos}}}$ is even and log-concave on the open cube with
$\int p_{C_{\mathrm{cos}}}|\log p_{C_{\mathrm{cos}}}|<\infty$, we obtain using 
Lemma~\ref{lem:cosine-signing} that there is a coloring $\eps\in\{\pm1\}^n$ with 
$\sum_j\eps_jv_j\in(-C_{\mathrm{cos}},C_{\mathrm{cos}})^d$.
\end{proof}
\section{The algorithm and its discrepancy bound}\label{sec:algo-intro}
\begin{center}\vspace{-6pt}
\small\color{cdarkred} (This section has not yet received careful scrutiny and rewriting.)
\end{center}
The proof of Section~\ref{sec:existence} is non-constructive. In the
rest of this paper we give an efficient deterministic algorithm for the
Koml\'os problem. Its input is the matrix
$A\in\reals^{m\times n}$ whose columns are the vectors $v_1,\ldots,v_n$
of Theorem~\ref{thm:main}, so that $m=d$, and whose rows we write as
$a_i^\tp$. Its output is a signing (coloring) $\eps\in\{-1,1\}^n$ with
discrepancy $\|A\eps\|_\infty<37.54$. 

\subsection{The result}\label{sec:result}
Building on the algorithm of Guo, Fang, and
Lu~\citep{guo2026poly}, we prove the following.

\begin{theorem}[Finite deterministic construction]\label{thm:algo}
For every $A\in\reals^{m\times n}$ with column Euclidean norms at most one,
Algorithm~\ref{alg:finite} performs $O(n\log^4(2+n))$ moves and uses $O(mn)+\widetilde O(n^4)$ arithmetic operations to compute a signing $\eps\in\{-1,1\}^n$ that satisfies
\[
 \|A\varepsilon\|_\infty<C_*:=\frac{1877}{50}=37.54,
\]
where $\widetilde O$ suppresses powers of $\log(2+n)$.
\end{theorem}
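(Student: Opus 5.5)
The plan is to run a deterministic discrete walk on a fractional signing $x\in[-1,1]^n$, in the spirit of the spectral walk of~\citep{guo2026poly}, and to prove the discrepancy bound and the cost bound separately. The walk starts at $x=0$, freezes a coordinate once it reaches $\pm1$, and moves only in the free coordinates $F$. Every row $a_i$ carries a barrier at level $C_*$ on $|a_i^\tp x|$. A row becomes \emph{active} once $|a_i^\tp x|$ exceeds an inner threshold $\beta C_*$, with $\beta<1$ fixed. The update direction $y$ at each step is chosen to satisfy three requirements:
\begin{itemize}
\item[(i)] $y$ is supported on $F$;
\item[(ii)] $y$ is orthogonal to the restrictions $a_i|_F$ of all active rows, so that active rows no longer move to first order;
\item[(iii)] $y$ lies in the span of the bottom eigenvectors of a spectral potential $W=\sum_i w_i(x)\,a_i|_F\,a_i|_F^\tp$, where $w_i$ are barrier weights growing in $|a_i^\tp x|$.
\end{itemize}
Condition (iii) makes the second-order growth of every inactive row controlled by the corresponding eigenvalue of $W$. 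The step length is then chosen by a finite-step rule so that the barrier potential $\sum_i\phi(a_i^\tp x)$ increases by at most a prescribed amount per unit of progress $\|x\|_2^2$.

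The main obstacle, and where I would spend the effort, is showing that the admissible subspace in (i)--(iii) always has positive dimension, ideally at least $|F|/2$. This needs a joint dimension count. First, since the columns of $A$ have norm at most one, $\sum_i\|a_i|_F\|_2^2\le|F|$. Each active row has $|a_i^\tp x|\ge\beta C_*$, and I would use the potential bound to show that active rows carry total weight at most a fraction $c_1$ of $|F|$. Second, the trace of $W$ is bounded by the barrier potential, so Markov's inequality on the eigenvalues leaves at most $c_2|F|$ eigenvalues above the chosen cutoff. Once $c_1+c_2<1$, subtracting both counts from $|F|$ leaves a subspace of dimension at least $(1-c_1-c_2)|F|$, and this is the budget of the dimension-counting step.

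Rather than bounding the two counts separately, I would charge the row constraints and the positive curvature of the potential against the single trace budget $|F|$. Concretely, I would choose $\phi$ to be convex and exponential-like in $|a_i^\tp x|/C_*$, so that the curvature term $\phi''$ and the activation indicator are dominated by one weight $w_i$. Optimizing the shape of $\phi$, together with $\beta$ and the eigenvalue cutoff, under the constraint that the potential stays below its initial value times a constant yields a numerical inequality of the form $F(C_*)<1$. I would then certify that this inequality holds at $C_*=1877/50$ using exact rational arithmetic.

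The discrepancy bound then follows in two steps. Every row starts at $0$ and never crosses the barrier, since active rows are frozen to first order and the second-order and finite-step errors are absorbed into the gap between $\beta C_*$ and $C_*$. When $|F|$ drops to a constant, I would round the remaining coordinates, at a cost that the same gap absorbs.

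For the operation count, each move increases $\|x\|_2^2$ by at least a fixed fraction of $|F|/\log^{O(1)}n$, which should bound the number of moves by $O(n\log^4(2+n))$. The factor of $m$ is removed by certified row screening and Li's motion clock. After one $O(mn)$ scan, every row stores its current value $a_i^\tp x$ together with a deadline: a lower bound on the cumulative motion needed before that row could become active or change its weight materially, computed from $\|a_i|_F\|_2$. A row is inspected only when its clock expires. Since each row's clock advances geometrically, the number of inspections of rows that are not near the barrier is bounded independently of $m$, while the rows that matter number at most $O(n)$ by the trace bound. Each move then costs $\widetilde O(n^3)$ for maintaining the low-spectrum projector through a coupled Gram--resolvent update, so the total is $O(mn)+\widetilde O(n^4)$.
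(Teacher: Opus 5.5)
Your proposal has a genuine gap in the row-protection count, and a better choice of $\phi$ does not close it. Orthogonality to $a_i|_F$ costs one full dimension per active row, however small $\|a_i|_F\|_2$ is. So $\sum_i\|a_i|_F\|_2^2\le|F|$ does not bound how many rows are active. Moreover, an orthogonal row never moves again, so active rows stay active until their support is exhausted, and they accumulate over the whole walk. A potential $\sum_i\phi(a_i^\tp x)$ bounds their number only through its current value. That value is of order $m\phi(0)$, or of order $n$ if you take $\phi(0)=0$ and charge its growth to progress in $\|x\|_2^2$; it is never of order the current $|F|$. With a constant threshold $\beta C_*$, your argument therefore cannot certify the budget $c_1|F|$ once $|F|$ falls below a constant fraction of $n$.

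The paper's count is proportional to the current active size $s$ at every time, and its construction exists to achieve this:
\begin{itemize}
\item Rows with $q_i>R^2$ are fixed exactly, and there are fewer than $s/R^2$ of them.
\item A medium row is measured on its own scale $r_i\approx\sqrt{q_i}$ by the concave barrier $u_{i\sigma}=(\sigma d_i-H_0)/r_i+E_i/r_i^2+b_0$. This barrier reserves room for the row's future rounding, and tangency protects it for both signs.
\item The row's weight enters a Gram matrix on the active coordinates through $z_r=p_i/r_i^2$ with $\one^\tp z_r>\theta^{-2}$. A nearly tight pair therefore contributes at least $1/c_p$ to $\one^\tp W\one\le Ls$ whatever its mass.
\item The spectral potential keeps $L<L_*$, and $L<1$ initially, independently of $m$, because of the large-row deposit $B$.
\end{itemize}
Normalizing by $r_i$ forces the diffuseness deletions and the tracking lemma. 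Using a top-eigenvalue potential forces control of the Perron-vector response. That is where the normalization, the cancelled high forcing, and the joint inertia lemmas produce the $1/400$ margin. The constant is then $H_0+13R$: the barrier height plus the tracking allowance $2A_\Delta R$, closed by the weighted quadratic rounding at $s\le336$. An unspecified inequality $F(C_*)<1$ does not produce it.

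Your cost analysis has two further gaps.
\begin{itemize}
\item A full move does not increase $\|x\|_2^2$ by $|F|/\log^{O(1)}n$. It increases it by $\tau^2\|h\|^2=\Omega((J\kappa)^{-2})$. The $O(n\log^4(2+n))$ move count comes from $\|x\|_2^2\le n$, the exact constraint $x_V^\tp h=0$, and at most $n$ short moves.
\item The trace bound does not give $O(n)$ rows near the barrier, since $\|Ax\|_2^2$ can be of order $n^2$. The paper's warm-row bound $O(n+s\log^2(2+n))$, on which the per-anchor cost $\widetilde O(ns^2+s^3)$ rests, needs two extra families of exact constraints. These are tangency to the tracked-sum energy $\sum_id_i^2$ and fewer than $s/4094$ high row-Gram equations, which together keep $\sum_id_i^2\le4096\|x\|_2^2$. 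These equations must be paid for in the same dimension budget as the protected rows and the response.
\end{itemize}
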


The model counts field operations and comparisons on real numbers.
Logarithms, exponentials, square roots, and eigenvectors are not
primitives; the algorithm approximates them by the finite procedures of
Appendix~\ref{app:arithmetic}. Table~\ref{tab:compare} compares this result with prior work. Unlike
Li~\citep{li2026fast}, we do not bound the bit length of the rational
states of the algorithm; see Section~\ref{sec:discussion}.

\begin{table}[htbp]
\caption{Complexity of deterministic polynomial-time algorithms for the Koml\'os problem.
Here $\widetilde O$ suppresses polylogarithmic factors in $m$ and $n$, and
$\omega$ denotes the matrix multiplication exponent. 
}
\label{tab:compare}
\centering\small
\begin{tabular}{lll}
\toprule
 & Discrepancy & Arithmetic cost\\
\midrule
Guo, Fang, and Lu~\citep{guo2026poly} & $\le8272$ & $\widetilde O(mn^9+n^{10})$\\
First version of this paper~\citep{akbas2026v1} & $<135$ & $\widetilde O(mn^3+n^4)$\\
First version of this paper~\citep{akbas2026v1} & $<135$ & $\widetilde O(mn^\omega+n^{\omega+1})$\\
Li~\citep{li2026fast} & $<99$ & $\widetilde O(mn+n^{\omega+2})$\\
This paper (Theorem~\ref{thm:algo}) & $<37.54$ & $\widetilde O(mn+n^4)$\\
\bottomrule
\end{tabular}
\end{table}

\paragraph{\textbf{Organization.}} Section~\ref{sec:howitworks}
describes the algorithm informally, and
Sections~\ref{sec:state}--\ref{sec:terminal} prove that
Algorithm~\ref{alg:finite} achieves the discrepancy bound $37.54$.
Section~\ref{sec:screening} bounds its running time. Because the step
sizes required by the proof are too small for practical purposes,
Section~\ref{sec:practical} describes a separate checked routine for
practical use.

\subsection{How the algorithm works}\label{sec:howitworks}

We first give an informal description of the algorithm. Starting from the initial signing \(x=0\), the algorithm moves inside the cube \([-1,1]^n\) until every coordinate reaches \(\pm1\). At each move, it chooses a direction \(h\) subject to linear constraints that protect rows whose discrepancy is close to its allowed bound. The main difficulty is to ensure that these constraints do not use up too many dimensions. Our spectral potential is designed precisely for this purpose: it controls how many rows can be protected simultaneously, while still leaving sufficiently many directions along which the signing can move.

To organize these constraints, we classify rows according to their remaining squared mass on the active coordinates. A row with large remaining mass is kept fixed by requiring its retained part to be orthogonal to every update direction. There can be only few such rows, since the sum of the remaining squared masses over all rows is at most the number of active coordinates. Once a row is no longer large, we remove any coefficient that accounts for too large a fraction of its remaining squared mass. After these removals, the retained part of the row is sufficiently diffuse for the spectral estimates used later. The removed coefficients are not discarded from the analysis: their possible contribution to the final discrepancy is tracked separately, and a telescoping argument bounds the resulting error.

For the retained coefficients, a concave profile measures how much
rounding remains; combining the resulting energy with the current row sum
gives a concave barrier. Because the barrier is concave, a direction
tangent to a nearly tight barrier keeps it feasible for either sign of
the move. Exponential weights place all the barriers in one positive
semidefinite matrix, whose top eigenvalue does two jobs: it limits the
number of nearly tight barriers, and it supplies the spectral part of
the potential.

The difficulty is that the top eigenvector changes during a move. Its
response adds positive curvature, so the concavity of the row barriers
does not by itself produce a useful direction. We therefore normalize
the matrix to separate the diagonal contribution of the large rows,
cancel the most sensitive spectral components, and bound the remaining
response by a joint inertia argument. This joint count is the source
of our improvement: it charges the row restrictions and the directions of
positive spectral curvature to one budget, instead of giving each kind of
loss its own worst-case allowance. While $s>336$ coordinates remain
active, the resulting count leaves more than $s/400$ free dimensions.
Once $s\le336$, a weighted quadratic rounding rule controls both the
remaining row error and the displacement of the signing, and together
with the tracking bound it gives the constant $37.54$.

A good local direction is only the first part of an algorithm: we must
also take finite steps, bound their number, and pay for maintaining the
rows. Spreading each direction across the active coordinates controls how
much the potential can change along a segment, and moving orthogonally to
the current signing makes its squared norm a measure of progress. Most
rows, finally, need not be evaluated at every move, because certificates
identify rows whose weights remain negligible until a computable time.

Our construction builds on the spectral framework
of~\citep{guo2026poly}, which in turn rests on the energy regularization
of Bansal and Jiang~\citep{bansaljiang2026decoupling} and on the concave
row barriers and restricted-concavity lemma of Guillen and
Kobzar~\citep{guillenkobzar2026complex}, as Guo, Fang, and Lu
acknowledge. From Li~\citep{li2026fast} it takes the warm and
cold screening of rows, the motion clock that schedules their inspection,
and the aggregate row statistics with their elementary identities
(Sections~4.3, 5, and~9 of~\citep{li2026fast}); Li's coupled
Gram--resolvent inequality (Lemma~2.4 there) is the starting point of
the response bound in Section~\ref{sec:geometry}, where we prove the
form of it that we need. What is new here is the joint inertia
calculation, its normalization, and the common dimension budget for
protected pairs, large rows, and spectral response; in place of Li's
quadratic potential charge, we control the tracked-sum energy, the sum
of the squared tracked row sums, directly by two additional families of
direction constraints (Section~\ref{sec:rowenergy}). The argument below is self-contained, with the finite arithmetic details deferred to
Appendices~\ref{app:proxy}--\ref{app:arithmetic}; the exact rational
certificates of the numerical inequalities it uses are in the
repository~\repourl.

\subsection{Controlling the rows during rounding}\label{sec:state}
We first describe what the walk keeps from each row and what it allows
itself to discard. The distinction matters at the end, because the two
parts are controlled differently: a barrier controls the retained sum,
whereas the total absolute mass of the discarded coefficients controls
their contribution. Throughout, $\|\cdot\|$ denotes
the Euclidean norm or its induced matrix norm, $\circ$ denotes
coordinatewise multiplication, and $t_+=\max(t,0)$.

\subsubsection{The input scan}\label{sec:scan}
A row with $\|a_i\|_1\le37$ is safe for every signing, because
$|(A\eps)_i|\le37<C_*$, so the walk retains only the other $m_0$ rows.
Few rows survive: for every survivor, Cauchy--Schwarz gives
$\|a_i\|_2^2>37^2/n$, while summing the column norm assumptions gives
$\sum_i\|a_i\|_2^2\le n$. Thus
\begin{equation}\label{eq:rowpreprocess}
 m_0<n^2/37^2.
\end{equation}
The scan costs $O(mn)$ field operations and comparisons, and the
surviving matrix still has column norms at most one. Terminal rounding
and all later row operations use this matrix only, and the omitted rows
never need to be checked again.

\subsubsection{Retained sums and the cost of deleting a coefficient}
The walk starts at $x=0\in[-1,1]^n$. A coordinate is active while
$|x_j|<1$, and once it reaches the boundary it stays fixed; write $V$ for
the active set and $s=|V|$ for its size. Each row $i$ retains a set
$T_i\subseteq V$ and an offset $c_i$, initially $T_i=V$ and $c_i=0$.
Write
\[
 b_{ij}=a_{ij}\one_{j\in T_i},\qquad p_{ij}=b_{ij}^2,\qquad
 q_i=\sum_jp_{ij},\qquad d_i=c_i+b_i^\tp x_V.
\]
Thus $q_i$ is the remaining squared mass and $d_i$ is the tracked sum.
When a retained coefficient is removed, its current contribution
$a_{ij}x_j$ moves into $c_i$, which preserves the current value of $d_i$.
If the coordinate is frozen, it never changes again and this transfer
accounts for it exactly; if it is still active, the removed coefficient
may contribute a later error.

The column norm assumption gives $\sum_iq_i\le s$. We hold a row fixed while $q_i>R^2$ by imposing
$b_i^\tp h=0$ on every direction $h$; these \emph{large} rows keep
$d_i=0$ and cost fewer than $s/R^2$ equations. A row is \emph{medium}
when $0<q_i\le R^2$ and \emph{completed} when $q_i=0$. Since
coefficients only leave the retained sets, a row can only move down
through these classes.

In a medium row we repeatedly delete an entry with
$p_{ij}>\Delta q_i$, updating $q_i$ each time and choosing the eligible
entry with the least original column index. The resulting row satisfies the diffuseness condition
$\max_jp_{ij}\le\Delta q_i$; Section~\ref{sec:geometry} uses this condition to bound the positive
curvature of the row's exponential weight. The price of enforcing it is
a tracking error, which the next lemma bounds. Let $D_i$ be the
deliberately deleted coordinates of row $i$, and let $x_j^{\rm del}$ be
the value of $x_j$ when its coefficient was deleted.

\begin{lemma}[Tracking with the remaining mass retained]\label{lem:tracking}
Put $A_\Delta=(1+\sqrt{1-\Delta})/\sqrt\Delta$; since
$52^2+165^2=173^2$, the value $\Delta=(52/173)^2$ of
\eqref{eq:finalparameters} gives $A_\Delta=13/2$.
At any time after row $i$ enters the medium class, its deleted mass
satisfies $\sum_{j\in D_i}|a_{ij}|\le A_\Delta(R-\sqrt{q_i})$.
For every cube point $y$ that agrees with $x$ on the currently frozen
coordinates,
\begin{equation}\label{eq:tracking}
 (Ay)_i-\bigl(c_i+b_i^\tp y_V\bigr)
 =\sum_{j\in D_i}a_{ij}(y_j-x_j^{\rm del}).
\end{equation}
\end{lemma}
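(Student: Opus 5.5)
The lemma has two parts: an exact bookkeeping identity~\eqref{eq:tracking} and a bound on the deleted mass. I will treat them separately.

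\emph{The identity.} I would argue by induction over the events that change $T_i$, $c_i$, or the active set. Consider the quantity $F(y):=(Ay)_i-(c_i+b_i^\tp y_V)-\sum_{j\in D_i}a_{ij}(y_j-x_j^{\rm del})$, where $y$ agrees with $x$ on the frozen coordinates. The goal is to show $F(y)=0$ at all times.

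Initially $T_i=V$, $c_i=0$, $D_i=\emptyset$ and nothing is frozen, so $F\equiv0$. The invariant I would carry is that $c_i=\sum_{j\text{ frozen},\,j\notin D_i}a_{ij}x_j+\sum_{j\in D_i}a_{ij}x_j^{\rm del}$. With this invariant, $F=0$ is immediate: every coordinate $j$ falls into exactly one of three classes, namely in $T_i$, frozen and never deleted, or in $D_i$, and each class contributes the same amount to both sides.

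It then remains to check that the invariant is preserved by the three kinds of events:
\begin{itemize}
\item[(i)] A move of active coordinates leaves $c_i$, $T_i$ and $D_i$ unchanged.
\item[(ii)] Freezing a coordinate $j\in T_i$ removes it from $V$ and hence from $T_i$; the transfer rule adds $a_{ij}x_j$ to $c_i$, matching the new frozen term.
\item[(iii)] A deliberate deletion adds $j$ to $D_i$ with $x_j^{\rm del}=x_j$ and adds $a_{ij}x_j$ to $c_i$.
\end{itemize}
A deleted coordinate that later freezes is already in $D_i$, and the constraint on $y$ forces $y_j=x_j$ there. Its term $a_{ij}(x_j-x_j^{\rm del})$ therefore still appears correctly. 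The same holds for large-row equations, which do not alter any of this bookkeeping.

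\emph{The mass bound.} Each deletion happens while the row is medium, with current mass $q$ and an entry $p_{ij}>\Delta q$. It removes $|a_{ij}|=\sqrt{p_{ij}}$ and lowers $q$ to $q'=q-p_{ij}$. I would prove the per-deletion inequality
\[
\sqrt{p}\le A_\Delta\bigl(\sqrt q-\sqrt{q-p}\bigr)\qquad\text{whenever } \Delta q<p\le q.
\]
Other changes to $q$ (freezing, and entry into the medium class at $q\le R^2$) only decrease $\sqrt q$ without adding deleted mass. Summing the per-deletion inequality then telescopes to $\sum_{j\in D_i}|a_{ij}|\le A_\Delta(\sqrt{q_{\rm entry}}-\sqrt{q_i})\le A_\Delta(R-\sqrt{q_i})$.

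For the inequality itself, set $p=\theta q$ with $\theta\in(\Delta,1]$. It becomes $\sqrt\theta/(1-\sqrt{1-\theta})\le A_\Delta$. Rationalizing gives $\sqrt\theta/(1-\sqrt{1-\theta})=(1+\sqrt{1-\theta})/\sqrt\theta$, which is decreasing in $\theta$. Its supremum over $\theta>\Delta$ is therefore its value at $\Delta$, which is exactly $A_\Delta$. For $\Delta=(52/173)^2$ we get $\sqrt{1-\Delta}=165/173$, so $A_\Delta=(173+165)/52=13/2$.

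\emph{Main obstacle.} The only delicate point is the bookkeeping in the identity, specifically distinguishing deleted coordinates that are still active from those that are frozen. I expect the invariant on $c_i$ to handle this cleanly.
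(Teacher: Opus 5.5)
Your proposal is correct and takes the same approach as the paper. The mass bound comes from the per-deletion ratio $(1+\sqrt{1-\theta})/\sqrt\theta<A_\Delta$ for $\theta>\Delta$, telescoped against the decrease of $\sqrt{q_i}$ from its entry value, which is at most $R$. The identity comes from tracking how each removal moves $a_{ij}x_j$ into $c_i$; your explicit invariant for $c_i$ just spells out the telescoping that the paper leaves implicit.
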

\begin{proof}
Each deliberate deletion decreases $\sqrt q$. At such a deletion
$a^2>\Delta q$, and
\[
 \frac{|a|}{\sqrt q-\sqrt{q-a^2}}
 =\frac{\sqrt q+\sqrt{q-a^2}}{|a|}<A_\Delta.
\]
Summing these decreases, and noting that freezing only decreases $q$
further, bounds their total by $R-\sqrt{q_i}$. Since offsets preserve
the current tracked sum at each removal, telescoping the affine changes
gives \eqref{eq:tracking}; frozen removals make no later
contribution.
\end{proof}

\subsubsection{A barrier that includes the remaining rounding energy}
Tracking controls the discarded coefficients. To control the retained
sum, we give each medium row a barrier that accounts both for its
present discrepancy and for the rounding still to come. The barrier is
measured against a scale $r_i$, a nonincreasing rational number with
$q_i\le r_i^2<\theta^2q_i$ and $r_i\le R$, where $\theta=1001/1000$. To
compute it, take the candidate $\theta^\ell\max_{j\in T_i}|a_{ij}|$
with the least integer $\ell\ge0$ such that
$\theta^{2\ell}\max_{j\in T_i}a_{ij}^2\ge q_i$. The scale is the minimum of
this candidate, $R$, and the previous scale if there is one.
Given the maximum, this takes $O(\log(2+n))$ operations.

The rounding still to come is measured by the coordinate profile
\[
 f(x)=\frac{519}{104}\log(3-2|x|)
                       +\frac{173}{52}(|x|-1)
 =\frac1{d_0}\Bigl(\frac32\log(3-2|x|)+|x|-1\Bigr),
\]
where $d_0=52/173$ is fixed in \eqref{eq:finalparameters}. The profile
is even, nonnegative, concave, and $C^{2,1}$ on $[-1,1]$. It
vanishes at $\pm1$, so the rounding energy
\[
 E_i=\sum_jp_{ij}f(x_j)
\]
vanishes as the retained coordinates become signs, and the profile
height bounds it:
\[
 f(0)=2.15557478671874\ldots<a_0,\qquad 0\le E_i\le a_0q_i.
\]
For each medium row and each sign $\sigma\in\{-1,1\}$, define
\begin{equation}\label{eq:barrier}
 u_{i\sigma}=\frac{\sigma d_i-H_0}{r_i}+\frac{E_i}{r_i^2}+b_0,
 \qquad w_{i\sigma}=A_0r_i^{-4}e^{\beta u_{i\sigma}}.
\end{equation}
The barrier is feasible when $u_{i\sigma}\le0$ for both signs, that is,
when the tracked sum satisfies $|d_i|\le H_0-b_0r_i-E_i/r_i$; the term $b_0r_i$ reserves room for
the final displacement. With retained data and scales fixed, the barrier
is concave in $x$, so $u_{i\sigma}(x\pm th)\le u_{i\sigma}(x)$ whenever $h$
is orthogonal to its gradient.

\subsubsection{A spectral bound on how many barriers need protection}
Only barriers close to zero need a tangency equation, and to bound how
many there are we collect the exponential weights in a Gram matrix:
\begin{equation}\label{eq:matrix}
 W=\sum_{i,\sigma}w_{i\sigma}p_ip_i^\tp,\quad
 B=B_0\sum_{i\text{ large}}\diag(p_i),\quad
 M=W+B+\eta\one\one^\tp,\quad L=\lambda_{\max}(M).
\end{equation}
Here $p_i=(p_{ij})_{j\in V}$. Each nearly tight barrier contributes a
definite amount to $\one^\tp W\one$ (made explicit in
Lemma~\ref{lem:jointcharge}), and $\one^\tp M\one\le Ls$, so keeping $L$
bounded limits the number of such barriers. The diagonal term $B$ reserves room for
the large rows: when row $i$ becomes medium, its new Gram terms satisfy
$\sum_\sigma w_{i\sigma}p_ip_i^\tp\preceq B_0\diag(p_i)$
(Lemma~\ref{lem:cleanup}). The
positive term $\eta\one\one^\tp$ makes $M$ entrywise positive, hence
its top eigenvector unique, and keeps its response quantitatively
controllable.

The walk uses the potential
\[
 \Phi=(L-1)_+^3-\eta\|x\|_2^2,
\]
where the norm includes frozen coordinates, so that progress toward a
signing is kept when the active set shrinks. The walk maintains three
invariants: the spectral bound $L<L_*=257/256$, feasibility
$u_{i\sigma}\le0$ of the medium barriers, and zero tracked sums $d_i=0$ on
large rows. If the active set is empty, set $L=0$.

At the beginning of a phase with $S$ active coordinates, set
\begin{equation}\label{eq:precision}
 J=1+\lceil\log_2 n\rceil,\qquad
 \eta=\frac1{2^{40}JS},\qquad \delta=\frac{\eta^5}{2^{100}}.
\end{equation}
A new phase starts when $s\le S/2$, and $\delta$
is the accuracy allowance for the finite spectral computations.

The numerical parameters are
\begin{equation}\label{eq:finalparameters}
\begin{gathered}
 R=\frac{7139}{5000},\qquad d_0=\frac{52}{173},\qquad
 \Delta=\frac{2704}{29929},\qquad c_0=\frac23,\qquad
 \beta=\frac{17}{25},\\
 A_0=\frac{25}{4},\qquad B_0=\frac{7463}{10000},\qquad
 a_0=\frac{5389}{2500},\qquad b_0=\frac{341613}{42500},\qquad
 H_0=\frac{94893}{5000}.
\end{gathered}
\end{equation}

The barrier constants satisfy $\beta(b_0-a_0)=4$, which the proof of
Lemma~\ref{lem:cleanup} uses. A further threshold,
$\ell_0=999999/1000000$, is a lower bound for $L$ whenever a spectral
response must be computed (Section~\ref{sec:geometry}). All of these
parameters are checked exactly against the inequalities that use them,
never through rounded decimal values. 

\subsubsection{Cleanup preserves the invariants}
After each move, cleanup freezes boundary coordinates, absorbs their
retained contributions, introduces newly medium rows, performs the
deliberate medium deletions, and decreases scales.

\begin{lemma}\label{lem:cleanup}
Cleanup preserves tracked sums, does not increase $L$ or $\Phi$, and
preserves feasible barriers. At initialization $L\le B_0+\eta n<1$,
since $\eta n=2^{-40}/J$.
\end{lemma}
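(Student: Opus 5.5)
I will treat the cleanup operations one at a time, in the order listed (freezing, absorbing, introducing newly medium rows, deliberate deletions, decreasing scales), and check for each operation the four claims: tracked sums preserved, $L$ nonincreasing, $\Phi$ nonincreasing, and feasible barriers preserved. Since $\|x\|_2^2$ is unchanged by cleanup (no coordinate of $x$ moves), monotonicity of $\Phi$ follows from monotonicity of $L$ through $t\mapsto(t-1)_+^3$. Tracked sums are preserved by construction: each removal of a retained coefficient transfers $a_{ij}x_j$ into $c_i$, so $d_i=c_i+b_i^\tp x_V$ is unchanged.

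\textbf{Monotonicity of $L$.} For $L$ I will use that each of $W$, $B$, $\eta\one\one^\tp$ is entrywise nonnegative. By Perron--Frobenius, $\lambda_{\max}$ is then monotone under entrywise decrease and under restriction to a principal submatrix.
\begin{itemize}
\item Freezing a coordinate deletes a row and column of $M$ (restricting to $V$), which cannot increase $\lambda_{\max}$.
\item Deleting a retained entry $p_{ij}$ of a medium row only decreases entries of $p_i$. It also decreases $E_i$ by $p_{ij}f(x_j)\ge0$. Hence $u_{i\sigma}$ and $w_{i\sigma}$ do not increase, provided $r_i$ is fixed.
\item Decreasing a scale $r_i$ is the delicate case, since $r_i^{-4}$, $(\sigma d_i-H_0)/r_i$ and $E_i/r_i^2$ all move. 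Here I would use feasibility: when $u_{i\sigma}\le0$, the map $r\mapsto u$ is controlled. I would need to show $r\mapsto A_0r^{-4}e^{\beta u(r)}$ is nondecreasing in $r$ on the feasible region. Since $\sigma d_i-H_0<0$ there, its derivative in $\log r$ is $-4-\beta(\sigma d_i-H_0)/r-2\beta E_i/r^2$. I would argue the term $\beta(H_0-\sigma d_i)/r$ dominates $4+2\beta E_i/r^2$, because feasibility gives $H_0-\sigma d_i\ge b_0r+E_i/r$. This reduces to $\beta b_0 r\ge 4+\beta E_i/r$. Using $E_i\le a_0q_i\le a_0r^2$, it suffices that $\beta(b_0-a_0)\ge4$, which is exactly the stated identity $\beta(b_0-a_0)=4$. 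This is a good sign that this is the intended route.
\end{itemize}

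\textbf{Feasibility.} Barrier feasibility under scale decrease is the same computation: $u$ is nondecreasing in $r$ when $u\le0$, so decreasing $r$ keeps $u\le0$. Under deletions $u$ only drops, as noted above.

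\textbf{Newly medium rows.} Such a row has $d_i=0$ (it was large), $q_i\le R^2$, and $r_i\le R$ with $r_i^2\ge q_i$. So $u_{i\sigma}\le -H_0/r_i+a_0+b_0$, which is negative provided $H_0>R(a_0+b_0)$, to be checked numerically. For $L$, its contribution $B_0\diag(p_i)$ is replaced by $\sum_\sigma w_{i\sigma}p_ip_i^\tp$. I would bound $p_ip_i^\tp\preceq q_i\diag(p_i)$ by Cauchy--Schwarz, and $w_{i\sigma}\le A_0r_i^{-4}e^{\beta u_{i\sigma}}$ with $u_{i\sigma}\le -H_0/r_i+a_0+b_0$. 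The task is then to verify $2A_0 q_i r_i^{-4}e^{\beta(a_0+b_0-H_0/r_i)}\le B_0$ over $r_i\le R$, $q_i\le r_i^2$. The worst case is in $r$, and since the exponential decay beats $r^{-2}$ this should be checked at the extremes, a one-variable rational check.

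The subsequent deletions in that row only decrease the weight, as shown above. The Loewner decrease $M\to M'$ with $M'\preceq M$ gives $\lambda_{\max}$ nonincreasing.

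\textbf{Initialization.} At the start all rows are large or completed, so $W=0$. Also $B=B_0\sum_i\diag(p_i)\preceq B_0 I$ by the column norm bound, so $L\le B_0+\eta n<1$.

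\textbf{Main obstacle.} I expect the main obstacle to be the new-medium-row Gram comparison constant. If it fails at some $r$, I would instead use $r_i^2<\theta^2q_i$ and the diffuseness condition more carefully.
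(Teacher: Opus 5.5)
\textbf{Comparison with the paper.} Away from initialization, your argument is the paper's own. You use the offset transfer for tracked sums, principal submatrices for freezing, and entrywise decrease for deletions. For scale decrease you use the sign of $r\partial_r\log w=\beta(b_0-u-E/r^2)-4$ on $\{u\le0,\ r^2\ge q\}$, where nonnegativity follows from $\beta(b_0-a_0)=4$. For newly medium rows you use $p_ip_i^\tp\preceq q_i\diag(p_i)$ together with a Loewner comparison against the former deposit $B_0\diag(p_i)$. One slip: your intermediate reduction should read $\beta b_0\ge 4+\beta E_i/r^2$, not $\beta b_0 r\ge4+\beta E_i/r$. The conclusion $\beta(b_0-a_0)\ge4$ is unaffected.

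\textbf{The gap is initialization.} It is not true that at the start every row is large or completed, so $W=0$ fails in general. The input scan only guarantees $\|a_i\|_1>37$. For example, a row with all entries $38/n$ survives the scan, yet $\|a_i\|_2=38/\sqrt n\le R$ once $n\ge709$. Such a row is medium from the outset and carries Gram terms. The paper handles these rows explicitly ("the same covering bounds the initial rows that are already medium"), and the repair uses exactly the comparison you proved for newly medium rows. At $x=0$ every medium row has $d_i=0$, $E_i=f(0)q_i\le a_0r_i^2$ and $r_i\le R$. Hence $u_{i\sigma}\le -H_0/r_i+a_0+b_0$ and $\sum_\sigma w_{i\sigma}p_ip_i^\tp\preceq B_0\diag(p_i)$. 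Therefore $W+B\preceq B_0\sum_i\diag(p_i)\preceq B_0I$ by the column norm bound, which gives $L\le B_0+\eta n$.

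\textbf{Two points to tighten in the deposit check.} First, the reduction to $r=R$ is not a matter of checking extremes. It uses that $r\mapsto r^{-2}e^{-\beta H_0/r}$ has logarithmic derivative $-2+\beta H_0/r$ in $\log r$, so it is increasing on $(0,R]$ precisely because $\beta H_0/R>2$. Second, the resulting inequality $e^{X}>2A_0/(B_0R^2)$, with $X=\beta(H_0/R-a_0-b_0)\approx2.107$, does hold. However, its relative margin is only about $10^{-3}$, so it has to be certified exactly rather than left ``to be checked''; the paper uses a positive Taylor lower bound for $e^X$. Your fallback via $r_i^2<\theta^2q_i$ is therefore not needed.
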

The proof, in Appendix~\ref{app:cleanupproof}, checks that a row entering
the medium class is covered in PSD order by its former large-row deposit,
and that deletion, scale decrease, and freezing only decrease the Gram
terms; the same covering bounds the initial rows that are already
medium.

For computation, Section~\ref{sec:screening} divides the medium rows
into \emph{cold} rows, whose weights are certified to be negligible, and
\emph{warm} rows, whose weights are evaluated at every move. The true
matrix and potential always include both kinds. When we analyze one
move, we may first hold the cold Gram terms fixed, and
Appendix~\ref{sec:coldstep} bounds the error caused by their actual
motion.

\subsection{Leaving enough directions for the walk}\label{sec:geometry}
We next show that row protection and spectral control leave a subspace
of useful directions. The count must include the directions of positive
curvature of the potential, and the estimate that improves the constant
counts them together with the row equations.

\subsubsection{Direct curvature and the motion of the top eigenvector}
Fix the current state, called the \emph{anchor}, and hold its retained
data and scales fixed. Let $r=(i,\sigma)$ denote a medium row-sign pair and set
$a_r=A_0e^{\beta u_r}$ and $z_r=p_i/r_i^2$, so that
$W=\sum_r a_rz_rz_r^\tp$. Let $v$ be the positive unit Perron vector at
the anchor, and let
\[
 g_r=\nabla_{x_V}u_r=\sigma b_i/r_i+z_r\circ f'(x)
\]
be the barrier gradient. The two direct curvature forms are
\[
 \mathcal U=\sum_r a_r\beta^2(z_r^\tp v)^2g_rg_r^\tp,\qquad
 \mathcal D=\sum_r a_r\beta(z_r^\tp v)^2
                 \diag(z_r\circ(-f''(x))),
\]
so that the direct curvature is
$v^\tp M''[h,h]\,v=h^\tp(\mathcal U-\mathcal D)h$. When the cold Gram
terms are held fixed (Section~\ref{sec:state}), the sums defining
$\mathcal U$ and $\mathcal D$ run over the warm pairs only; this is the
form used below.

The ridge makes the top eigenvalue simple, with gap at least $\eta$:
the matrix $M-\eta vv^\tp$ remains entrywise nonnegative and has the
positive eigenvector $v$ with eigenvalue $L-\eta$, which must therefore
be its spectral radius, while every other eigenvalue of $M$ is
unchanged. The
eigenvector equation also gives $Lv_i\ge\eta\sum_jv_j\ge\eta$, so
\[
 \nu:=\min_iv_i\ge\eta/2\qquad\text{while }L<2.
\]

Because medium rows are diffuse, $z_{rj}\le\Delta=d_0^2$ and
$|(g_r)_j|\le\sqrt{z_{rj}}(1+d_0|f'(x_j)|)$. The profile satisfies
\[
 1+d_0|f'(x)|=\frac3{3-2|x|},\qquad -f''(x)=\frac6{d_0(3-2|x|)^2},
\]
which gives the exact identity
\[
 \frac{(1+d_0|f'(x)|)^2}{-f''(x)}=\frac{3d_0}2=\frac{d_0}{c_0}
 =\frac{78}{173}.
\]
With $\rho=\beta d_0/c_0=1326/4325$, it yields the diagonal comparison
$\mathcal U_{jj}\le\rho\mathcal D_{jj}$. Moreover, $\|g_r\|\le3$
and $\beta\|f''\|_\infty\Delta=5304/4325$; the appendices use these
bounds in the forms $\beta\|g_r\|\le51/25<25/8$ and
$\beta\|f''\|_\infty\Delta<9/4$. Since the profile is twice continuously
differentiable with Lipschitz second derivative, the third-derivative
estimate in \eqref{eq:logcurve} below holds almost everywhere, including
across a coordinate's zero crossing.

This diagonal comparison limits the positive directions of
$\mathcal U-\mathcal D$, but it covers only the direct contribution to
the curvature of $L$. The motion of the Perron vector contributes
another nonnegative form, $2(M'[h]v)^\tp(LI-M)^\dagger M'[h]v$, which is
large near the top eigenspace; we call $M'[h]v$ the \emph{forcing}, and
after the normalization below split it into a \emph{high} part, on
normalized eigenvalues at least $18/25$ of the top one, and a
\emph{low} part. Li's coupled estimate~\citep[Lemma~2.4]{li2026fast}
bounds the low response by a scalar multiple of $\mathcal U$. We instead
cancel the high forcing by linear equations and keep the low response
explicitly, because the scalar bound would lose the information that the
joint count uses.

\subsubsection{Normalizing the response}
The large rows are fixed by exact equations, so we normalize their
deposit $B$ away: at an anchor with $L>\ell_0$ put
\[
 D_L=LI-B,\qquad S_L=D_L^{-1/2},\qquad
 K_L=S_L(W+\eta\one\one^\tp)S_L.
\]
The column norm bound gives $B\preceq B_0I$, and $B_0\le7/8<\ell_0$, so
$D_L\succ0$. The normalized matrix $K_L$ is PSD and entrywise positive,
has Perron eigenvalue one, and has the positive unit Perron vector
$\tilde v=D_L^{1/2}v/t_0$, where $t_0=\|D_L^{1/2}v\|$. Its gap below one
is at least $\eta/L>\eta/2$, by Courant--Fischer applied to
$I-K_L=S_L(LI-M)S_L$ with $\|D_L\|\le L$.

The normalization preserves the response: if $y\perp v$, then
\begin{equation}\label{eq:genresponse}
 y^\tp(LI-M)^\dagger y
 =(S_Ly)^\tp(I-K_L)^\dagger(S_Ly).
\end{equation}
To check this without a congruence rule for pseudoinverses, note that
$S_Ly\perp\tilde v$. The vector $S_L(I-K_L)^\dagger S_Ly$ solves
$(LI-M)z=y$, and it differs from the reduced solution only by a multiple
of $v$, whose inner product with $y$ is zero.

For the analysis, factor $W+\eta\one\one^\tp=CC^\tp$ with one column
$\sqrt{a_r}z_r$ for each row pair and the ridge column $\sqrt\eta\one$.
For a varying warm pair, let the corresponding row of $G$ be
$\sqrt{a_r}\beta(z_r^\tp v)g_r^\tp$, and use zero rows for the ridge and
for frozen cold terms. These factors appear only in the argument, so
their square roots are not arithmetic primitives. On the curve with
frozen cold terms, the forcing is $M'[h]v=CGh$, and $G^\tp G=\mathcal U$
is the warm-pair form, with diagonal negative term $\mathcal D$. Since
the normalized Gram matrix is $C_LC_L^\tp=K_L$, with $C_L=S_LC$, the
reciprocal Gram count of Lemma~\ref{lem:reciprocal} below applies to the
normalized low response.

\subsubsection{The computed curvature form}
The algorithm works with rational approximations to these matrices. The
top eigenvalue of $M$ is
reported with error at most $\delta$ (Appendix~\ref{app:proxy}). If the
reported value is at most $1-\delta$, then the true value satisfies
$L\le1$; in this \emph{low branch} the first two derivatives of
$(L-1)_+^3$ vanish, and no response is needed. Otherwise
$L>1-2\delta>\ell_0$, and the \emph{high branch} computes a normalized
response.

Appendix~\ref{app:proxy} specifies the approximations and proves their
error bounds; here we need only their form. The diagonal matrix $\bar S$
approximates $S_L$, and $\bar W$ is formed from the approximate warm
weights. The matrix $N_K=V_KD_KV_K^\tp$, with $V_K$ exactly orthogonal
and $D_K$ diagonal, both rational, reconstructs the computed normalized
matrix $\bar K=\bar S(\bar W+\eta\one\one^\tp)\bar S$ and lies within
$256\delta$ of $K_L$. Write $k_*=\max_j(D_K)_{jj}$, let the \emph{high
set} $F$ consist of the columns of $V_K$ whose reported eigenvalue is at
least $(18/25)k_*$, and on the other columns set
\[
 R_K=(V_K)_{F^c}\operatorname{diag}((k_*-(D_K)_{jj})^{-1})(V_K)_{F^c}^\tp.
\]
Using the computed Perron vector $\bar v$ of $M$ and nonnegative weights
$\bar a_r$, form $\bar U,\bar D$ by the formulas for
$\mathcal U,\mathcal D$, and set
\[
 \bar B_1=\sum_{r\text{ warm}}\bar a_r\beta(z_r^\tp\bar v)z_rg_r^\tp.
\]
The equations $(V_K)_F^\tp\bar S\bar B_1h=0$ cancel the computed high
forcing, and the remaining curvature is represented by
\begin{equation}\label{eq:genproxy}
 \bar Q=\bar U-\bar D+2\bar B_1^\tp\bar SR_K\bar S\bar B_1.
\end{equation}
The column with the largest reported eigenvalue approximates $\tilde v$
and belongs to $F$; since $\tilde v^\tp S_LM'[h]v=L'[h]/t_0$, the same
equations also make the first derivative $L'[h]$ vanish up to the
precision error of Appendix~\ref{app:proxy}. The comparison
$\bar U_{jj}\le\rho\bar D_{jj}$ remains exact, and the finite accuracy
bounds of Appendix~\ref{app:proxy} transfer this computed form to the true
curvature without requiring the spectral cutoff to be continuous.

\subsubsection{Counting cancellation and positive curvature together}
Two lemmas explain why retaining the response helps. The first charges
positive curvature according to the reciprocal eigenvalues of its Gram
multiplier; the second applies this charge to the low response and adds
the number of high spectral equations.

\begin{lemma}[Reciprocal Gram inertia]\label{lem:reciprocal}
Let $D\succeq0$ be an $s\times s$ diagonal matrix, let $G$ be a real
matrix with $s$ columns, and suppose $(G^\tp G)_{jj}\le\rho D_{jj}$ for
every $j$. Let $\Gamma\succeq I$ be symmetric, of size equal to the
number of rows of $G$, with eigenvalues $\gamma_1\ge\gamma_2\ge\cdots$.
If $G^\tp\Gamma G-D$ has $k$ positive eigenvalues, then
\[
 \rho s\ge\operatorname{tr}(D^{\dagger/2}G^\tp GD^{\dagger/2})
 \ge\sum_{j=1}^k\gamma_j^{-1},
\]
with strict inequality in the second comparison when $k>0$.
\end{lemma}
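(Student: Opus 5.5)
The plan is to reduce everything to the support of $D$, whiten by $D^{-1/2}$, and then compare traces through Ky Fan's minimum principle applied to $\Gamma^{-1}$.

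\emph{Zero diagonal entries.} If $D_{jj}=0$, then $(G^\tp G)_{jj}=\|Ge_j\|^2\le\rho D_{jj}=0$, so the $j$-th column of $G$ vanishes. Hence the $j$-th row and column of $G^\tp\Gamma G-D$ are zero, and the same holds for $D^{\dagger/2}G^\tp GD^{\dagger/2}$. Restricting to the index set $P=\{j:D_{jj}>0\}$ therefore changes neither the trace nor the number $k$ of positive eigenvalues. From now on I assume $D\succ0$, replacing $s$ by $|P|\le s$.

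\emph{First inequality.} This is immediate:
\[
\operatorname{tr}(D^{-1/2}G^\tp GD^{-1/2})=\sum_j\frac{(G^\tp G)_{jj}}{D_{jj}}\le\rho|P|\le\rho s .
\]

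\emph{Second inequality.} Put $X=GD^{-1/2}$. By Sylvester's law of inertia, congruence by $D^{-1/2}$ shows that $X^\tp\Gamma X-I$ has exactly $k$ positive eigenvalues, i.e.\ $X^\tp\Gamma X$ has $k$ eigenvalues exceeding one. Since $\Gamma\succeq I\succ0$, $\Gamma^{1/2}$ exists and is invertible. The nonzero spectrum of $X^\tp\Gamma X=(\Gamma^{1/2}X)^\tp(\Gamma^{1/2}X)$ coincides with that of $Z:=\Gamma^{1/2}XX^\tp\Gamma^{1/2}$, so $Z\succeq0$ also has $k$ eigenvalues $\lambda_1\ge\cdots\ge\lambda_k>1$.

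Let $\Pi$ be the orthogonal projection onto the span of the corresponding eigenvectors, so that $Z\succeq\lambda_k\Pi$. Then
\[
\operatorname{tr}(X^\tp X)=\operatorname{tr}(XX^\tp)=\operatorname{tr}(\Gamma^{-1/2}Z\Gamma^{-1/2})
=\operatorname{tr}(\Gamma^{-1/2}Z\Gamma^{-1/2})\ge\lambda_k\operatorname{tr}(\Gamma^{-1/2}\Pi\Gamma^{-1/2})=\lambda_k\operatorname{tr}(\Pi\Gamma^{-1}\Pi).
\]
The inequality uses that $Y\mapsto\Gamma^{-1/2}Y\Gamma^{-1/2}$ preserves the PSD order and that trace is monotone.

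By Ky Fan's minimum principle, $\operatorname{tr}(\Pi\Gamma^{-1}\Pi)$ is at least the sum of the $k$ smallest eigenvalues of $\Gamma^{-1}$, which is $\sum_{j\le k}\gamma_j^{-1}$.

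\emph{Strictness.} When $k>0$, we have $\operatorname{tr}(\Pi\Gamma^{-1}\Pi)>0$ because $\Gamma^{-1}\succ0$ and $\Pi\ne0$. Combined with $\lambda_k>1$, this gives a strict inequality.

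The only delicate points are, first, the reduction at zero diagonal entries, where the hypothesis forces the corresponding columns of $G$ to vanish, and second, getting the direction of Ky Fan right: one needs the sum of the \emph{smallest} $k$ eigenvalues of $\Gamma^{-1}$, namely the reciprocals of the largest $\gamma_j$. Neither is a real obstacle once the congruence-then-trace structure is in place.
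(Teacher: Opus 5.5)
Your proof is correct and is essentially the paper's argument. It uses the same reduction to the support of $D$, the same whitening by $D^{-1/2}$ with Sylvester's law, and the same Ky Fan minimum principle for $\Gamma^{-1}$, with strictness coming from the eigenvalues exceeding one. The only difference is one of presentation. The paper builds the orthonormal frame $\Omega=\Gamma^{1/2}GD^{-1/2}X\Sigma^{-1}$ from an orthonormal basis $X$ of the positive eigenspace of $D^{-1/2}G^\tp\Gamma GD^{-1/2}-I$, and it keeps the matrix factor $\Sigma^2\succ I$. You instead take the spectral projector of $\Gamma^{1/2}GD^{-1}G^\tp\Gamma^{1/2}$, which spans the same $k$-dimensional subspace, and bound that factor by the scalar $\lambda_k>1$.
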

\begin{proof}
A zero diagonal entry of $D$ forces the corresponding column of $G$ to
vanish, so both matrices vanish on that coordinate and it can be
removed. Since congruence by $D^{-1/2}$ preserves inertia, we may put
$Y=GD^{-1/2}$ and choose an orthonormal matrix $X$ spanning the
positive eigenspace of $Y^\tp\Gamma Y-I$. Then $\Sigma^2=X^\tp Y^\tp\Gamma YX\succ I$
for a positive definite $\Sigma$, the columns of
$\Omega=\Gamma^{1/2}YX\Sigma^{-1}$ are orthonormal, and
\[
 \operatorname{tr}(Y^\tp Y)\ge\|YX\|_F^2
 =\operatorname{tr}(\Sigma^2\Omega^\tp\Gamma^{-1}\Omega)
 >\operatorname{tr}(\Omega^\tp\Gamma^{-1}\Omega)
 \ge\sum_{j=1}^k\gamma_j^{-1}.
\]
Here the diagonal hypothesis bounds the first trace by $\rho s$, and the
last inequality is the variational principle for the sum of the $k$
smallest eigenvalues of $\Gamma^{-1}$.
\end{proof}

\begin{lemma}[A rational joint count]\label{lem:newjoint}
Suppose $N=UDU^\tp\succeq0$, where $U$ is exactly orthogonal, $D$ is
diagonal, $\lambda_*=\max_jD_{jj}>0$, and $CC^\tp\preceq N+\epsilon I$.
Let the high set $F_0$ consist of the indices $j$ with
$D_{jj}\ge(18/25)\lambda_*$. Put
\[
 \mathcal R=U_{F_0^c}\operatorname{diag}((\lambda_*-D_{jj})^{-1})U_{F_0^c}^\tp,
 \qquad Q=G^\tp G-D_0+2G^\tp C^\tp\mathcal RCG,
\]
where $G$ has $s$ columns, $C$ has as many columns as $G$ has rows,
$N$ has as many rows as $C$, $D_0\succeq0$ is diagonal, and
$(G^\tp G)_{jj}\le\rho(D_0)_{jj}$. If $Q$ has $k$ positive eigenvalues,
then
\[
 |F_0|+k\le\frac{25}{23}\rho s+\frac{32}{23}\frac{\operatorname{tr}N}{\lambda_*}
            +\frac{50}{23}\frac{\epsilon s}{\lambda_*},
\]
with strict inequality when $k>0$.
\end{lemma}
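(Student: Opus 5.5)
The plan is to write $Q$ in the form of Lemma~\ref{lem:reciprocal} and then turn its reciprocal-eigenvalue bound into a linear count, with the high indices charged against $\operatorname{tr}N$.

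\textbf{Reduction to Lemma~\ref{lem:reciprocal}.} Put $\Gamma=I+2C^\tp\mathcal RC$. Since $\mathcal R\succeq0$, we have $\Gamma\succeq I$, and
\[
 Q=G^\tp\Gamma G-D_0 .
\]
Lemma~\ref{lem:reciprocal} therefore applies with $D=D_0$. It gives
\[
 \rho s\ge\sum_{j=1}^k\gamma_j^{-1},
\]
strictly when $k>0$, where $\gamma_1\ge\gamma_2\ge\cdots$ are the eigenvalues of $\Gamma$.

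\textbf{Bounding the spectrum of $\Gamma$.} The nonzero eigenvalues of $C^\tp\mathcal RC$ coincide with those of $\mathcal R^{1/2}CC^\tp\mathcal R^{1/2}$. By hypothesis,
\[
 \mathcal R^{1/2}CC^\tp\mathcal R^{1/2}\preceq\mathcal R^{1/2}(N+\epsilon I)\mathcal R^{1/2}.
\]
The right-hand side is diagonal in the basis $U$. Its entry is $(D_{jj}+\epsilon)/(\lambda_*-D_{jj})$ for $j\notin F_0$, and $0$ on $F_0$. Write $\mu_j=D_{jj}/\lambda_*\in[0,18/25)$ for $j\notin F_0$, and $e=\epsilon/\lambda_*$. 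By Weyl monotonicity, the sorted eigenvalues of $\Gamma$ are dominated termwise by the sorted values
\[
 g_j=\frac{1+\mu_j+2e}{1-\mu_j},\qquad j\notin F_0,
\]
padded with ones. The padding causes no trouble because $g_j\ge1$. Hence there are distinct low indices $j_1,\dots,j_k$ (or padding entries, which behave as $\mu=0$) with
\[
 \gamma_i^{-1}\ge\frac{1-\mu_{j_i}}{1+\mu_{j_i}+2e}.
\]
The point to track carefully is that distinct positive eigenvalues are charged to distinct low indices. This is what later lets the $\mu$'s be summed against $\operatorname{tr}N$ together with the high indices.

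\textbf{The scalar count.} Use
\[
 1-\gamma^{-1}=\frac{2(\mu+e)}{1+\mu+2e}\le\frac{2\mu}{1+\mu}+2e .
\]
The key scalar inequality is
\[
 \frac{2\mu}{1+\mu}-\frac2{25}\le\frac{32}{25}\,\mu\qquad\text{on }[0,18/25].
\]
The left side minus the right side is concave, and its maximum sits at $(1+\mu)^2=25/16$, that is $\mu=1/4$, where it equals $0$. This is the tight step and the main thing to verify exactly. Summing over the $k$ charged indices, with $\sum_i\gamma_i^{-1}<\rho s$, gives
\[
 \frac{23}{25}k\le\sum_i\gamma_i^{-1}+\frac{32}{25}\sum_i\mu_{j_i}+2ek .
\]

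For $j\in F_0$ we have $\mu_j\ge18/25$, so $\tfrac{23}{25}\le\tfrac{32}{25}\mu_j$, because $32\cdot18=576\ge575$. Adding these $|F_0|$ inequalities and using $\sum_{F_0}\mu_j+\sum_i\mu_{j_i}\le\operatorname{tr}N/\lambda_*$ (since $D\succeq0$) and $k\le s$, we get
\[
 \frac{23}{25}\bigl(|F_0|+k\bigr)<\rho s+\frac{32}{25}\frac{\operatorname{tr}N}{\lambda_*}+\frac{2\epsilon s}{\lambda_*}
\]
when $k>0$, and the non-strict version when $k=0$. Multiplying by $25/23$ gives exactly the stated constants $25/23$, $32/23$ and $50/23$.
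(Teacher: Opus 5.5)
Your proposal is correct and follows the paper's proof essentially step for step. It uses the same choice $\Gamma=I+2C^\tp\mathcal RC$ in Lemma~\ref{lem:reciprocal}, the same eigenvalue-by-eigenvalue domination of $C^\tp\mathcal RC$ by $(D_{jj}+\epsilon)/(\lambda_*-D_{jj})$ on $F_0^c$ with padding, the same supporting tangent at $\mu=1/4$, and the same charging of the high indices via $18/25>23/32$.

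The only differences are cosmetic. You check the tangent by concavity of $2\mu/(1+\mu)$, whereas the paper writes out the gap $32(\lambda-1/4)^2/[25(1+\lambda)]$. Your displayed identity $1-\gamma^{-1}=2(\mu+e)/(1+\mu+2e)$ should read as an inequality $\le$ for the actual eigenvalues of $\Gamma$.
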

\begin{proof}
We apply Lemma~\ref{lem:reciprocal} with $D_0$ in place of $D$ and
$\Gamma=I+2C^\tp\mathcal RC$, so that $G^\tp\Gamma G-D_0=Q$. By the
stated PSD domination and the variational principle, the nonzero
eigenvalues of $\mathcal R^{1/2}CC^\tp\mathcal R^{1/2}$, which are those
of $C^\tp\mathcal RC$, are bounded, in decreasing order, by the numbers
$(D_{jj}+\epsilon)/(\lambda_*-D_{jj})$ with $j\notin F_0$; this is the
comparison behind Li's coupled estimate~\citep[Lemma~2.4]{li2026fast},
retained eigenvalue by eigenvalue instead of being reduced to its largest
value. Consequently each relevant reciprocal eigenvalue of $\Gamma$ is at
least
\[
 \frac{1-\lambda}{1+\lambda+2\epsilon/\lambda_*}
 \ge \frac{1-\lambda}{1+\lambda}-\frac{2\epsilon}{\lambda_*}
 \ge \frac{23}{25}-\frac{32}{25}\lambda-\frac{2\epsilon}{\lambda_*},
 \qquad \lambda=D_{jj}/\lambda_*.
\]
The last inequality is the supporting tangent at $\lambda=1/4$: its
difference from the left-hand rational function is exactly
$32(\lambda-1/4)^2/[25(1+\lambda)]\ge0$. Write
$\operatorname{tr}N_{\rm low}=\sum_{j\notin F_0}D_{jj}$ and
$\operatorname{tr}N_{\rm high}=\sum_{j\in F_0}D_{jj}$. Padding the low
list by zeros if $k$ exceeds its length and summing gives
\[
 \frac{23}{25}k
 <\rho s+\frac{32}{25}\frac{\operatorname{tr}N_{\rm low}}{\lambda_*}
       +\frac{2\epsilon s}{\lambda_*}.
\]
On the high list $\lambda\ge18/25>23/32$, so
$(23/25)|F_0|\le(32/25)\operatorname{tr}N_{\rm high}/\lambda_*$, and
adding the two inequalities proves the claim. When $k=0$, the high-list
bound and the nonnegativity of all the other terms suffice.
\end{proof}

The tangent in this proof gives the coefficients $
 a_t=\frac{25}{23}$, and $b_t=\frac{32}{23}$ of $\rho s$ and $\operatorname{tr}N/\lambda_*$; its zero $23/32$ lies
below the cutoff $18/25$, which is what the high-list bound uses. In the
high branch of the algorithm, Lemma~\ref{lem:newjoint} is applied to the
computed objects of Section~\ref{sec:geometry}: $N=N_K$, $U=V_K$,
$F_0=F$, $\mathcal R=R_K$, $\lambda_*=k_*$, and $\epsilon=\delta$; the
matrix $G=\bar G$ has the row $\sqrt{\bar a_r}\beta(z_r^\tp\bar v)g_r^\tp$
for each warm pair and zero rows otherwise; $C=\bar S\bar C$, where
$\bar C$ has the columns $\sqrt{\bar a_r}z_r$ and $\sqrt\eta\one$; and
$D_0=\bar D$. Then $\bar G^\tp\bar G=\bar U$, $C\bar G=\bar S\bar B_1$,
and $Q=\bar Q$, while Appendix~\ref{app:proxy} checks that
$CC^\tp=\bar K\preceq N_K+\delta I$.

\subsubsection{Charging row protection to the same matrix}\label{sec:protect}
To decide which barriers to protect, approximate each warm row-sign
pair's barrier with absolute error at most $1/1000$, and protect the pair
when the computed value is at least $-1/200$; every protected pair
receives the exact equation $g_r^\tp h=0$. A protected pair then has true
$u_r\ge-1/100$, and an unprotected warm pair has $u_r<-1/250$.

\begin{lemma}[Joint charge for protected pairs, large rows, and response]\label{lem:jointcharge}
Suppose the anchor lies in the high branch, so that $\ell_0<L<L_*$. Let
$m_L$ be the number of large rows, $p$ the number of protected pairs,
and $k$ the number of positive eigenvalues of $\bar Q$, and let
$ c_p=\frac{\theta^4}{A_0(1-\beta/100)}$. If
\begin{equation}\label{eq:depositcondition}
 B_0R^2\left(c_p+\frac{b_t\Delta}{\ell_0-B_0}\right)\le1,
\end{equation}
then
\[
 m_L+p+|F|+k\le\left(\frac1{R^2}+a_t\rho+b_t\Delta+c_p(L_*-B_0)\right)s+\frac s{10000},
\]
where the last term is the precision and ridge allowance of
Appendix~\ref{app:proxy}.
\end{lemma}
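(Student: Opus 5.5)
The count splits into three pieces, each charged against a budget proportional to $s$. The large rows cost $m_L<s/R^2$, since $\sum_iq_i\le s$ and each large row has $q_i>R^2$. The high set and the response directions are bounded by Lemma~\ref{lem:newjoint}. The protected pairs are charged to $\one^\tp W\one$. The work is in showing that the protected pairs and the term $\operatorname{tr}N/\lambda_*$ of Lemma~\ref{lem:newjoint} draw on the \emph{same} trace of the normalized matrix, so that they are not counted twice.

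\textbf{Step 1: the response count.} First I apply Lemma~\ref{lem:newjoint} with the identifications made after its proof, namely $N=N_K$, $\lambda_*=k_*$ and $\epsilon=\delta$. This gives
\[
|F|+k\le a_t\rho s+b_t\frac{\operatorname{tr}N_K}{k_*}+\frac{50}{23}\frac{\delta s}{k_*}.
\]
The exact comparison $\bar U_{jj}\le\rho\bar D_{jj}$ supplies the diagonal hypothesis. Since $N_K$ is within $256\delta$ of $K_L$ and $K_L$ has top eigenvalue one, we get $k_*\ge1-256\delta$ and $\operatorname{tr}N_K\le\operatorname{tr}K_L+256\delta s$. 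All $\delta$ terms and the factor $1/k_*$ go into the allowance $s/10000$.

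Next I expand the trace:
\[
\operatorname{tr}K_L=\sum_j\frac{(W+\eta\one\one^\tp)_{jj}}{L-B_{jj}}.
\]
The ridge contributes at most $\eta s/(\ell_0-B_0)$, which is negligible. For a medium pair $r$, diffuseness gives $\|z_r\|^2\le\Delta\,\one^\tp z_r$, so its contribution to $\operatorname{tr}(S_LWS_L)$ is at most $\Delta\,a_r\one^\tp z_r/(L-B_{\max})$.

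\textbf{Step 2: the protected-pair count.} Every protected pair has $u_r\ge-1/100$, so $a_r\ge A_0e^{-\beta/100}\ge A_0(1-\beta/100)$. Because $q_i\le r_i^2<\theta^2q_i$, we have $\one^\tp z_r=q_i/r_i^2\ge\theta^{-2}$. I would bound $p$ through the Rayleigh quotient against $\one$: from $\one^\tp M\one\le Ls$ we get
\[
\sum_r a_r(\one^\tp z_r)^2\le Ls-\one^\tp B\one-\eta s^2,
\]
where $(\one^\tp z_r)^2\ge\theta^{-4}$ on protected pairs, and $\one^\tp B\one=B_0\sum_{\rm large}q_i>B_0R^2m_L$.

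This costs protected pairs $c_p$ each and leaves a large-row deposit. Let $X=\sum_ra_r\one^\tp z_r$ denote the total weighted mass that feeds $\operatorname{tr}N_K$. The resulting inequality is roughly
\[
p+b_t\Delta\frac{X}{\ell_0-B_0}\le c_p\bigl(Ls-B_0R^2m_L\bigr)+(\text{charge of large rows against the denominators}).
\]
The point is to split $\one^\tp M\one\le L_*s$ into the part used by the protected pairs and the part used by the trace, rather than bounding each by $L_*s$ separately.

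\textbf{Step 3: combining and closing.} Adding the two estimates leaves a term proportional to $m_L$ with coefficient
\[
B_0R^2\Bigl(c_p+\frac{b_t\Delta}{\ell_0-B_0}\Bigr).
\]
By \eqref{eq:depositcondition} this coefficient is at most one, so the large rows' deposit in $B$ pays back at least their own count. Their share of the main budget therefore sits inside $(L_*-B_0)$, and the total comes to
\[
\Bigl(\frac1{R^2}+a_t\rho+b_t\Delta+c_p(L_*-B_0)\Bigr)s.
\]

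\textbf{Main obstacle.} I expect the hardest step to be the bookkeeping of the normalization, where coordinates carrying large-row diagonal mass have denominators $L-B_{jj}$ smaller than $L$. Bounding $\operatorname{tr}(S_LWS_L)$ by $\Delta X/(\ell_0-B_0)$ is crude, and the exact coefficient in the target, $b_t\Delta$ on $s$, suggests a different route. My first attempt would be
\[
\operatorname{tr}K_L\le\Delta\operatorname{tr}\bigl(S_L\operatorname{diag}(\textstyle\sum_ra_rz_r)S_L\bigr),
\]
followed by $\sum_r a_r z_r\preceq$ the diagonal of $M-B$, coordinatewise, via the Perron/row-sum bound $(M\one)_j\le$ something like $L$ times a weight. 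This yields $\sum_j(L-B_{jj})^{-1}(L-B_{jj})=s$ plus a correction from $B$ that \eqref{eq:depositcondition} absorbs.
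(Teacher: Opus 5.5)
Your skeleton is the paper's: Lemma~\ref{lem:newjoint} for $|F|+k$, the charge $1/c_p$ of each protected pair against $\one^\tp W\one\le Ls-\operatorname{tr}B-\eta s^2$, the deposit $\operatorname{tr}B\ge B_0R^2m_L$, and, in Step~3, the closing substitution. But the mechanism you describe in Step~2 is wrong, and the estimate that actually closes the count is missing. The budget is not split. A protected pair's weight enters both $\one^\tp W\one$ and $\operatorname{tr}W$, and the target coefficient $c_p(L_*-B_0)+b_t\Delta$ is exactly the sum of two charges, \emph{each} drawn on the full budget $Ls-\operatorname{tr}B$. Your displayed inequality $p+b_t\Delta X/(\ell_0-B_0)\le c_p(Ls-B_0R^2m_L)+\cdots$ cannot hold. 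Already with $m_L=p=0$ (so that your large-row term vanishes) it would force $X\le c_p(\ell_0-B_0)Ls/(b_t\Delta)\approx0.33\,Ls$. Yet $X\ge\one^\tp W\one$ can be close to $Ls$, for instance when diffuse rows are arranged in identical blocks so that $M$ has nearly constant row sums. What you need is the trace bound in the same currency as the protected charge. Since $z_{rj}\le\Delta\,\one^\tp z_r$, one has $\|z_r\|^2\le\Delta(\one^\tp z_r)^2$. Hence $\operatorname{tr}W\le\Delta\,\one^\tp W\one\le\Delta(Ls-\operatorname{tr}B)$, and with $D_L\succeq(L-B_0)I$,
\[
 \operatorname{tr}K_L\le\frac{\Delta(Ls-\operatorname{tr}B)+\eta s}{L-B_0}.
\]
Your variant $\|z_r\|^2\le\Delta\,\one^\tp z_r$, summed into $X=\sum_ra_r\one^\tp z_r$, only gives $X<\theta^2\,\one^\tp W\one$. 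Putting $\ell_0-B_0$ rather than $L-B_0$ in the denominator costs a further factor up to $(L_*-B_0)/(\ell_0-B_0)\approx1.015$. On $b_t\Delta\approx0.126$ either loss exceeds the $s/10000$ allowance, so the stated constants would not come out.

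With this trace bound, the coefficient $b_t\Delta$ that you flag as the main obstacle falls out of the closing you already wrote. Add $m_L$, the bound $p\le c_p(Ls-\operatorname{tr}B)$, and the bound from Lemma~\ref{lem:newjoint} combined with the trace estimate above. Then insert $\operatorname{tr}B\ge B_0R^2m_L$. The coefficient of $m_L$ becomes $1-B_0R^2(c_p+b_t\Delta/(L-B_0))$, which is nonnegative by \eqref{eq:depositcondition} and $L>\ell_0$. The deposit returns \emph{at most} one unit per large row, not at least one; that is why $m_L$ is bounded above rather than dropped. Substituting $m_L\le s/R^2$ leaves $b_t\Delta(L-B_0)s/(L-B_0)=b_t\Delta s$ and turns the protected charge into $c_p(L-B_0)s$; finally use $L<L_*$. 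The coordinatewise route you sketch is therefore unnecessary, and as stated it would fail. It needs, for every coordinate $j$, a bound on $(W\one)_j$ by a multiple of $L-B_{jj}$. For a nonnegative matrix only the average row sum $\one^\tp M\one/s$ is bounded by $L$, and individual row sums can exceed it. That average is exactly the budget used above.
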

\begin{proof}
A protected pair $r=(i,\sigma)$ has $u_r\ge-1/100$ and
$\one^\tp z_r=q_i/r_i^2>\theta^{-2}$, so it contributes at least
$A_0\theta^{-4}(1-\beta/100)=1/c_p$ to $\one^\tp W\one$. Hence
\[
 p\le c_p(Ls-\operatorname{tr}B-\eta s^2),\qquad
 \operatorname{tr}B\ge B_0R^2m_L.
\]
Diffuseness gives $\|z_r\|^2\le\Delta(\one^\tp z_r)^2$, so
$\operatorname{tr}W\le\Delta\,\one^\tp W\one$; with $D_L\succeq(L-B_0)I$
this gives
$\operatorname{tr}K_L\le(\Delta(Ls-\operatorname{tr}B)+\eta s)/(L-B_0)$.
Lemma~\ref{lem:newjoint}, applied as above, and the trace comparison
$\operatorname{tr}N_K/k_*\le\operatorname{tr}K_L+1024\delta s$ of
Appendix~\ref{app:proxy} then give
\[
 |F|+k\le a_t\rho s+b_t
       \frac{\Delta(Ls-\operatorname{tr}B)+\eta s}{L-B_0}
\]
up to the diagonalization error. We add $p$ and $m_L$, discard the negative ridge
term, and use the trace lower bound; the coefficient of $m_L$ is then
\[
 1-B_0R^2\left(c_p+\frac{b_t\Delta}{L-B_0}\right)\ge0,
\]
by the deposit condition \eqref{eq:depositcondition} and $L>\ell_0$. It is therefore valid to
substitute $m_L\le s/R^2$, after which the two terms with denominator
$L-B_0$ combine to $b_t\Delta s$ and the protected charge becomes
$c_p(L-B_0)s$. Finally we use $L<L_*$. The positive ridge remainder
$b_t\eta s/(L-B_0)\le b_t\eta s/(\ell_0-B_0)$ and the diagonalization
error fit the allowance $s/10000$ of Appendix~\ref{app:proxy}.
\end{proof}

\subsubsection{The dimension count}\label{sec:dimension}
The available dimensions must accommodate the joint spectral and row charge,
two tangency equations, and a small set of tracked-sum constraints used
for screening, and the fraction left over will exceed $1/400$ whenever
$s>336$.

Numerically,
\[
 c_p=\frac{\theta^4}{A_0(1-\beta/100)}
     =0.161740798067016\ldots,
\]
and condition~\eqref{eq:depositcondition} holds:
\[
 B_0R^2\left(c_p+\frac{b_t\Delta}{\ell_0-B_0}\right)
 =0.999892399877647\ldots<1.
\]
The walk stops at $s\le336$. Besides the constraints counted in
Lemma~\ref{lem:jointcharge}, each direction satisfies
$x_V^\tp h=0$, tangency to the tracked-sum energy, and the high row-Gram
equations of Section~\ref{sec:rowenergy}, so for every $s>336$ the
remaining dimension fraction is
\begin{equation}\label{eq:finaldimension}
\begin{aligned}
 1-\frac1{R^2}-a_t\rho-b_t\Delta-c_p(L_*-B_0)
 -\frac1{10000}-\frac2{337}-\frac1{4094}
 &=0.00257600292021615\ldots\\
 &>\frac1{400}.
\end{aligned}
\end{equation}
Here $1/10000$ pays for finite accuracy, $2/337$ for the two
tangencies, and $1/4094$ for the high row-Gram equations. In the low
branch, neither normalized response equations nor positive curvature
need to be removed; the first step of the proof of
Lemma~\ref{lem:jointcharge} bounds the large rows and protected pairs by
$(R^{-2}+c_pL_*)s$, and $1-R^{-2}-c_pL_*-2/337-1/4094>1/400$ leaves the
required dimension there as well. Every inequality in this subsection,
and every inequality used in the proof of Lemma~\ref{lem:cleanup}, is
checked in rational arithmetic (Appendix~\ref{app:certificate}), so the
strict margins do not rely on the rounded decimals displayed.

\subsection{Finite steps and termination}\label{sec:directions}
The dimension count guarantees freedom at a point, but a finite
algorithm needs a direction that remains useful over a nonzero interval,
and it needs a way to finish once few coordinates remain. We first
construct an exactly feasible rational basis and then choose in its span
a vector whose coordinates and row derivatives are small. Such a vector
controls the Taylor remainder, and it yields a step whose length can be
charged to the growth of $\|x\|^2$. Section~\ref{sec:terminal} then
rounds the remaining at most $336$ coordinates and proves the
discrepancy bound.

\subsubsection{An exactly feasible negative basis}\label{sec:negbasis}
Approximate eigenvectors need not satisfy the row equations exactly, so
we impose the equations with an exact projector before selecting the
negative subspace, and we project the computed basis once more
afterward. The equations are the large-row equations, every protected
gradient equation, $x_V^\tp h=0$, the tracked-sum tangency
\eqref{eq:chargegradient}, and the high row-Gram equations of
Section~\ref{sec:rowenergy}; in the high branch we add
$(V_K)_F^\tp\bar S\bar B_1h=0$. Extract independent constraint rows into
$E_0$ and form the exact projector $P=I-E_0^\tp(E_0E_0^\tp)^{-1}E_0$,
with $P=I$ when there are no rows. Let
\[
 Q_{\rm neg}=P(\bar Q-\eta I)P+(I-P)
\]
in the high branch and $Q_{\rm neg}=-\eta P+(I-P)$ in the low branch.
By Lemma~\ref{lem:jointcharge} and the dimension count
\eqref{eq:finaldimension}, $Q_{\rm neg}$ has more than $s/400$ eigenvalues at most $-\eta$. We
diagonalize $Q_{\rm neg}$ as in \eqref{eq:interface} with accuracy
$\xi=\eta/2^{24}$, obtaining $\hat V\hat D\hat V^\tp$ with $\hat V$
exactly orthogonal, and retain the columns reported at most $-3\eta/4$.
They form a matrix $V_-$ with $k>s/400$ columns and reported eigenvalues
$D_-$, and we put
\begin{equation}\label{eq:basis}
 Z=PV_-,\qquad T=Z/(1+\xi).
\end{equation}
The projected basis $T$ is exactly feasible and satisfies
$(3/4)I_k\preceq T^\tp T\preceq I_k$. For the lower bound, the residual
identity $(I-P)V_-(I-D_-)=(I-P)(Q_{\rm neg}-\hat V\hat D\hat V^\tp)V_-$
and $I-D_-\succeq I$ give $\|(I-P)V_-\|\le\xi$. Since the retained columns satisfy
$(V_-a)^\tp Q_{\rm neg}V_-a\le(-3\eta/4+\xi)\|a\|^2$, projection
preserves negativity:
\[
 (Za)^\tp(\bar Q-\eta I)(Za)
 =(V_-a)^\tp Q_{\rm neg}(V_-a)-\|(I-P)V_-a\|^2<0.
\]
Thus every nonzero $h\in\operatorname{im}T$ satisfies
$h^\tp\bar Qh<\eta\|h\|^2$.

This basis gives negative curvature for the full potential. Let
$\phi(L)=(L-1)_+^3$; since $L<L_*$, we have $\phi'(L)\le3/65536$ and
$\phi''(L)\le3/128$. For exactly feasible $h$ in the high branch,
Appendix~\ref{app:proxy} shows that $|L'[h]|\le2^{30}\delta\|h\|/\eta$
and, in its response-error bound \eqref{eq:genproxyerror}, that
$|L''[h,h]-h^\tp\bar Qh|\le2^{40}\delta\|h\|^2/\eta^3=\eta^2\|h\|^2/2^{60}$.
Since
$\Phi''[h,h]=\phi'(L)L''[h,h]+\phi''(L)L'[h]^2-2\eta\|h\|^2$, these
bounds give, for every $h\in\operatorname{im}T$,
\begin{equation}\label{eq:potentialcurvature}
 \Phi''[h,h]\le-\frac{19}{10}\eta\|h\|^2.
\end{equation}
In the low branch the first two derivatives of $\phi(L)$ vanish, which
gives this conclusion directly.

\subsubsection{Spreading the direction and choosing a finite step}\label{sec:steps}
A vector concentrated on a few coordinates could change a row weight too
rapidly, even if its curvature at the anchor were favorable, so we use
the dimension of $\operatorname{im}T$ to spread the direction.

The finite conditional-expectation construction of
Appendix~\ref{app:arithmetic} is applied to the test vectors $e_j$ and
$(1-c_0)g_r=g_r/3$ for warm pairs, all of norm at most one. With
\[
 \kappa=\lceil\log_2(8(n+2m_0))\rceil=O(\log(2+n)),
\]
which bounds the logarithm of the number of tests, it returns
$h_0\in\operatorname{im}T$ with $3/4\le\|h_0\|\le1$ and every test
bounded by $3\sqrt{\kappa/k}$, where $k>s/400$ is the number of columns
of $T$. Put $h=h_0/16$, and use $K=1$ if
$9\kappa/k\ge1$; otherwise halve $K$ from one until the next square
would fall below $9\kappa/k$. Then
\begin{equation}\label{eq:flat}
 \frac1{32}\le\|h\|\le\frac1{16},\quad
 16\|h\|_\infty\le K,\quad |\beta g_r^\tp h|\le K,\quad
 \eta\le K\le1,\quad K^2\le\min(1,14400\kappa/s).
\end{equation}
Along a line inside the cube, each warm weight is
$a_r(t)=a_r(0)e^{\ell_r(t)}$ with $\ell_r(t)=\beta(u_r(x+th)-u_r(x))$,
and the profile derivatives give
\begin{equation}\label{eq:logcurve}
 \ell_r(0)=0,\quad |\ell_r'(0)|\le K,\quad
 -K^2/2\le\ell_r''\le0,\quad |\ell_r^{(3)}|\le K^3/4
 \quad\text{almost everywhere}.
\end{equation}
The four inequalities in \eqref{eq:profilecontracts} of
Appendix~\ref{app:proxy} give these bounds directly.

Curvature at the anchor does not by itself bound the potential at the
end of a step. The following lemma, proved in Appendix~\ref{app:perron},
uses the positivity of the Gram curve to bound the Taylor remainder of
the spectral term along a whole segment. For a curve $M(t)$, the
matrices $K_L$ and $S_L$ are formed at $t=0$ as in
Section~\ref{sec:geometry}, and $P_{(3/4,1]}(K_L)$ denotes the spectral
projector of $K_L$ onto its eigenvalues in $(3/4,1]$.

\begin{lemma}[Finite Perron estimate]\label{lem:generalizedperron}
Let $M(t)=B+C_0C_0^\tp+\sum_ra_re^{\ell_r(t)}z_rz_r^\tp$ be a PSD,
entrywise nonnegative Gram curve, with $B$ diagonal,
$0\preceq B\preceq(7/8)I$, and $C_0,z_r\ge0$, and assume the bounds
\eqref{eq:logcurve} with $K\le1$. At $t=0$ assume $\ell_0\le L\le9/8$,
spectral gap at least $\zeta\in(0,1]$, and unit Perron vector $v>0$ with
minimum entry $\nu$, and put $k_v=1+\log(1/\nu)$. Suppose that the high
normalized forcing satisfies
\[
 e=\|P_{(3/4,1]}(K_L)S_LM'(0)v\|\le K\zeta\nu/2^{16}.
\]
Then, for $|t|\le\sqrt\zeta/(4096Kk_v)$ in the interval on which the
curve is defined, the function
$\Phi(t)=(\lambda_1(M(t))-1)_+^3-\pi(t)$, with $\pi$ any quadratic
polynomial, satisfies
\[
 \Phi(t)\le\Phi(0)+\Phi'(0)t+\tfrac12\Phi''(0)t^2
        +2^{16}K^3|t|^3+2^{20}K^4t^4/\zeta.
\]
\end{lemma}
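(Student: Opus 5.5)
The quadratic $\pi(t)$ is matched exactly by its own second-order Taylor polynomial, so it suffices to prove the bound for $\phi(\lambda(t))$ with $\phi(L)=(L-1)_+^3$ and $\lambda(t)=\lambda_1(M(t))$. The plan is to control $\lambda(t)$ along the segment by perturbation theory around the anchor and then compose with $\phi$. There are three steps.

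\textbf{Step 1: control of the curve.} Put $E(t)=M(t)-M(0)=\sum_r a_r(e^{\ell_r(t)}-1)z_rz_r^\tp$. From \eqref{eq:logcurve} we get $|e^{\ell_r(t)}-1-\ell_r'(0)t-\tfrac12(\ell_r'(0)^2+\ell_r''(0))t^2|\lesssim K^3|t|^3$. Since all terms are PSD, each coefficient matrix of $E$ is dominated in PSD order by a multiple of $W\preceq M(0)$. Concretely, $E(t)\preceq (e^{K|t|}-1)W$, and similarly for the remainders. After normalization by $S_L$, the perturbation $S_LE(t)S_L$ is therefore bounded in operator norm by $O(K|t|)$, and its third-order Taylor remainder by $O(K^3|t|^3)$.

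\textbf{Step 2: expansion of $\lambda(t)$.} By the Perron--Frobenius structure and the gap $\zeta$, the top eigenvalue stays simple for $|t|\le\sqrt\zeta/(4096Kk_v)$, because the perturbation norm $O(K|t|)$ is far below $\zeta$ there. I would write the standard second-order Rayleigh--Schr\"odinger expansion
\[
 \lambda(t)=L+v^\tp E(t)v+(E(t)v)^\tp(LI-M)^\dagger E(t)v+\text{rem},
\]
with the response term rewritten via \eqref{eq:genresponse} in normalized form. The response splits into a high part, on $P_{(3/4,1]}(K_L)$, and a low part. On the low part the resolvent $(I-K_L)^\dagger$ is bounded by $4$, so its third-order contribution is $O(K^3|t|^3)$ without any use of the gap. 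On the high part the resolvent can be as large as $1/\zeta$. Here the hypothesis $e\le K\zeta\nu/2^{16}$ cancels the linear forcing, so the high component of $E(t)v$ is $et+O(K^2t^2)$, and its response is at most $(et+O(K^2t^2))^2/\zeta$. Expanding gives the quadratic Taylor term, a cubic remainder, and a quartic error $O(K^4t^4/\zeta)$. The remainder of the eigenvalue expansion itself is $\|E\|^3/\zeta^2$-type. This has to be bounded using the restriction on $|t|$: $\|E\|\lesssim K|t|\le\sqrt\zeta/4096$ lets one trade powers of $\zeta$ so that the remainder fits into $K^3|t|^3+K^4t^4/\zeta$.

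\textbf{Step 3: the factor $k_v$ and composition with $\phi$.} The factor $k_v$ enters because the entrywise bound $M'v$ relative to $v$ requires $v_j\ge\nu$. The relative change of $\log v_j$ along the curve is at most $K|t|k_v$. I would keep the perturbed Perron vector within a factor $2$ entrywise of $v$, for instance via a Collatz--Wielandt sandwich, so that the PSD domination in Step 1 applies uniformly in $t$. Finally, compose with $\phi$. Since $\phi'\le 3/65536$, $\phi''\le 3/128$, and $\phi'''\le6$ on $L\le 9/8+O(1)$, the Taylor remainder of $\phi\circ\lambda$ is bounded by $\phi'\cdot(\text{rem of }\lambda)+\phi''\cdot|\lambda'|\,|\text{rem}|+\phi'''|\lambda(t)-L|^3$, with $|\lambda(t)-L|\lesssim K|t|$. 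These contributions fit the constants $2^{16}$ and $2^{20}$. The kink of $(\cdot)_+^3$ at $1$ is harmless because the function is $C^2$ with bounded third derivative almost everywhere.

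\textbf{Main obstacle.} I expect the hardest part to be the high-response term in Step 2: turning the single cancellation hypothesis on $e$ at $t=0$ into a bound valid along the whole segment, while the resolvent on the high block is as large as $1/\zeta$. My plan is to treat the high block by a Schur-complement (Feshbach) reduction onto $P_{(3/4,1]}(K_L)$. This yields an exact secular equation for $\lambda(t)$ whose low block is uniformly invertible. The top eigenvalue then solves a small-dimensional problem perturbed by $O(K^2t^2)$ forcing, and the $t^4/\zeta$ term arises exactly from squaring that forcing against the gap.
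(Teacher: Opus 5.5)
\textbf{There is a genuine gap in Step~2.}

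\emph{The gap-preservation claim is false.} You say the top eigenvalue stays simple because the perturbation norm $O(K|t|)$ is ``far below $\zeta$'' on the range $|t|\le\sqrt\zeta/(4096Kk_v)$. But on that range $K|t|$ reaches $\sqrt\zeta/(4096k_v)$, which is \emph{larger} than $\zeta$ once $\zeta$ is small. In the application $\zeta=\eta\le2^{-40}$, so this case is the relevant one. Weyl-type bounds therefore do not keep $\lambda_2(M(t))$ below the top eigenvalue.

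\emph{The remainder cannot be absorbed.} Even where $K|t|<\zeta$, the norm-based Rayleigh--Schr\"odinger or Kato remainder you propose is of type $\|E\|^3/\zeta^2\approx K^3|t|^3/\zeta^2$. This does not fit into $2^{16}K^3|t|^3+2^{20}K^4t^4/\zeta$. The restriction on $t$ bounds powers of $K|t|$ by powers of $\zeta$, not the reverse, so you cannot ``trade powers of $\zeta$'' in the direction you need. For fixed small $t$ that remainder exceeds the target by a factor of order $\zeta^{-2}$. The $\zeta$-free cubic term is essential, because the step schedule \eqref{eq:stepschedule} relies on it.

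\emph{The Feshbach reduction does not help.} The block $P_{(3/4,1]}(K_L)$ need not be low-dimensional. The hypothesis on $e$ controls the perturbation only on $v$; on the other high directions it still has norm of order $K|t|$, possibly much larger than $\zeta$. So the same gap problem reappears inside the high block.

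\textbf{What is missing} is a \emph{relative} control of the gap that uses the entrywise positivity of the curve. This is how the paper proceeds.
\begin{enumerate}
\item It bounds the reduced response $z=(LI-M)^\dagger M'(0)v$ componentwise: $|z_i|\le64Kk_vv_i$ and $\|z\|\le16K$. This is where $k_v$ enters, by summing the low Neumann series of $K_L$ with the smaller of a uniform and a geometric bound. It is also where the smallness of $e$ is used.
\item It absorbs the first-order motion of the Perron vector by the diagonal congruence $D=I+tZ$, with $Z=\diag(z_i/v_i)$. This gives $D((L+at)I-M(t))D=A+tG+R(t)$, where $A=LI-M(0)$, $Gv=0$, $|G_{ij}|\le129Kk_vM_{ij}$ off the diagonal, and $\|R(t)\|\le2^{14}K^2k_v^2t^2$.
\item Since $Av=Gv=0$, both quadratic forms are weighted Laplacians in the variables $y_i/v_i$. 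Hence $|y^\tp Gy|\le129Kk_v\,y^\tp Ay$, so $tG$ is a small relative perturbation of $A$. The restriction $|t|\le\sqrt\zeta/(4096Kk_v)$ is exactly what makes $\|R(t)\|\le\zeta/1024$. Together these give $\lambda_2(M(t))\le L+at-\zeta/2$, even though $\|M(t)-M(0)\|$ may be much larger than $\zeta$.
\item Only then does the paper take the trial vector $\psi=v+tz$, whose residual is $O(K^2t^2)$. The Rayleigh residual inequality $0\le\lambda_1-r\le\|\mathrm{res}\|^2/(r-\lambda_2)$ then contributes the single $K^4t^4/\zeta$ term. The Taylor expansion of the Rayleigh quotient $r$ itself has the $\zeta$-free remainder $1024K^3|t|^3$.
\end{enumerate}

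Your Collatz--Wielandt step in Step~3 keeps the perturbed Perron vector comparable to $v$, but it gives no bound on $\lambda_2(M(t))$, so it cannot replace this argument.
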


The following schedule makes this remainder smaller than the quadratic
decrease.

Put $c_*=2^{-56}$ and $H=\lceil\log_2(4/\eta)\rceil$, and choose the
largest dyadic $\tau$ satisfying
\begin{equation}\label{eq:stepschedule}
 \tau\le c_*,\qquad K^2\tau\le c_*\eta,\qquad
 K^2H^2\tau^2\le c_*^2\eta.
\end{equation}
These are rational tests, and $\tau\ge c_*\eta/(2H)$. We apply
Lemma~\ref{lem:generalizedperron} with $\zeta=\eta$, the gap of $M$.
Since the Perron minimum satisfies $\nu\ge\eta/2$, the lemma's $k_v$ is
at most $H$, so the third condition
gives $\tau\le c_*\sqrt\eta/(KH)<\sqrt\eta/(4096Kk_v)$: every move with
$|t|\le\tau$ lies in the range of the lemma. In the high branch the
high equations $(V_K)_F^\tp\bar S\bar B_1h=0$ make both the high forcing
$e$ and $|L'[h]|$ at most $2^{30}\delta\|h\|/\eta$ (Appendix~\ref{app:proxy}). With
$\delta=\eta^5/2^{100}$, $\eta\le2^{-40}$, and $\nu\ge\eta/2$, this is
below the threshold $K\eta\nu/2^{16}$ of the lemma, and together with
$x_V^\tp h=0$ it bounds the first-order term
$t\Phi'[h]=t\phi'(L)L'[h]$ by the first term of \eqref{eq:onestep}
below. The remainder of the lemma divided by $\eta t^2$ is at most
$2^{16}c_*+2^{20}c_*^2<2^{-16}$; in the low branch the spectral potential
grows by at most $64K^3|t|^3$, which the same allowance covers. For the
full potential, including the cold-weight allowance of
Appendix~\ref{sec:coldstep}, every move inside the cube with
$0<t\le\tau$ therefore obeys
\begin{equation}\label{eq:onestep}
 \Phi(x\pm th)-\Phi(x)
 \le\frac{\eta\tau t}{2^{16}}-\frac{\eta t^2}{2048}
                         +\frac{\eta t^2}{2^{16}}.
\end{equation}
Here the first term bounds the first-order term, including the
cold-weight allowance, the second is the curvature decrease from
\eqref{eq:potentialcurvature} with $\|h\|\ge1/32$, and the third is the
remainder allowance. A full move ($t=\tau$) thus decreases $\Phi$ by at
least $\eta\tau^2/4096$, while a short move ($t<\tau$) increases it by
at most $\eta\tau^2/2^{16}$.

The move also keeps every barrier feasible. Protected barriers are
concave and $h$ is orthogonal to their gradients, so they cannot
increase; every unprotected warm barrier starts below $-1/250$ and
increases by at most $\beta^{-1}K\tau<1/250$; every cold barrier starts
below $-\ell_c/\beta$ (Section~\ref{sec:warmcold}) and changes by less
than $|t|$ (Appendix~\ref{sec:coldstep}); and large rows remain at zero.

\subsubsection{Full moves make progress; short moves fix a coordinate}
A move of length $\tau$ is \emph{full}; a shorter move is \emph{short}
and is chosen to freeze a coordinate.

\begin{samepage}
\begin{algo}[Finite deterministic signing]\label{alg:finite}
All ties use the least original index, preferring the positive sign.
\begin{enumerate}
\item Read $A$ and discard rows with $\ell_1$ norm at most 37, retaining
all original columns. If $n=0$, return the empty signing; if $m_0=0$,
return all positive signs. If $n\le336$, round directly as in
Section~\ref{sec:terminal}. Otherwise initialize $x=0$, the retained-row
state, the screening records, and the phase with $S=n$; perform cleanup.
\item Refresh warm rows and all due cold certificates
(Sections~\ref{sec:warmcold} and~\ref{sec:clock}). Compute the reported
top eigenvalue of $M$ and choose the branch (Section~\ref{sec:geometry}),
and determine the protected pairs (Section~\ref{sec:protect}). Form the
exact constraints and the projector $P$ (Section~\ref{sec:negbasis}), the
proxy $\bar Q$ \eqref{eq:genproxy} in the high branch, the feasible
negative basis $T$ \eqref{eq:basis}, the flat direction $h$
\eqref{eq:flat}, and the dyadic step $\tau$ \eqref{eq:stepschedule}.
\item Set $t_*=\min_{h_j\ne0}(1-|x_j|)/|h_j|$ and $t=\min(t_*,\tau)$.
If $t=\tau$, use $+th$. Otherwise choose an attaining coordinate and the
sign of $th$ that takes it to its nearer boundary. Update $x_V$, and
advance the clock $\vartheta$ of Section~\ref{sec:clock} by $t$.
\item Perform cleanup and the corresponding statistic updates. At $s\le336$,
apply terminal rounding and return. At $s\le S/2$, start a new phase with
$S=s$ and the parameters \eqref{eq:precision}. Repeat from step 2.
\end{enumerate}
\end{algo}
\end{samepage}
Both signs of a move of size at most $t_*$ stay in the cube, and a short
move freezes a coordinate. The algorithm uses only computed approximants
and never evaluates an exact eigenvector or the exact potential.

\begin{lemma}\label{lem:budget}
All states obey $L<L_*$, and there are $O(nJ^2\kappa^2)$ moves.
\end{lemma}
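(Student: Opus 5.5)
We argue phase by phase, using $\Phi=(L-1)_+^3-\eta\|x\|_2^2$ as a potential that controls both statements simultaneously.

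\emph{Invariant $L<L_*$.} We prove $\Phi<(L_*-1)^3-\eta n$ at every state, by induction over moves. This suffices, because $\|x\|_2^2\le n$ gives $(L-1)_+^3\le\Phi+\eta n$, so $L<L_*$. At initialization Lemma~\ref{lem:cleanup} gives $L<1$, hence $\Phi\le0$, and the slack $(L_*-1)^3=2^{-24}$ dominates $\eta n\le2^{-40}$.

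Along a move, the one-step bound \eqref{eq:onestep} applies because its hypotheses are in place: the dimension count, the negative basis, the step schedule, and Lemma~\ref{lem:generalizedperron} all assume $L<L_*$ at the anchor. A full move decreases $\Phi$ by at least $\eta\tau^2/4096$. A short move increases it by at most $\eta\tau^2/2^{16}$, and cleanup afterwards does not increase $\Phi$ (Lemma~\ref{lem:cleanup}). Short moves freeze a coordinate, so each phase has at most $S/2$ of them, contributing an increase of at most $S\eta\tau^2/2^{17}$.

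Within a phase, $\eta$ and the schedule are fixed except that $K$, hence $\tau$, may vary. I would bound $\tau\le c_*$ and charge short-move increases against the full-move decreases plus the initial slack. The cleanest route is to note that $-\eta\|x\|^2$ only decreases, since moves are orthogonal to $x_V$, so $\|x\|^2$ grows by $t^2\|h\|^2$. The spectral part $(L-1)_+^3$ is then at most $\Phi+\eta n$. Each short increment is $\eta\tau^2/2^{16}\le\eta c_*^2$, and summing over at most $n$ short moves gives $n\eta c_*^2\ll2^{-24}$. Hence $(L-1)_+^3$ never reaches $2^{-24}$. Phase changes only shrink $\eta$ and thereby reset the potential favorably; I would check this carefully, since $-\eta\|x\|^2$ jumps up when $\eta$ decreases, by at most $\eta n\le2^{-40}$ per phase. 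With $O(\log n)$ phases this stays within the slack.

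\emph{Number of moves.} Short moves number at most $n$, since each freezes a coordinate. For full moves I would use $\|x\|^2$ as the progress measure. Each full move raises $\|x\|^2$ by $\tau^2\|h\|^2\ge\tau^2/1024$ (orthogonality to $x_V$), and $\|x\|^2\le n$ throughout. Within a phase with $S$ active coordinates, only the active part grows, and it is bounded by $S$. So the number of full moves in the phase is at most $1024S/\tau_{\min}^2$.

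This is too crude: with $\tau\ge c_*\eta/(2H)$ it would give a polynomial count. The intended count must therefore come from the potential instead. The spectral part is bounded in $[0,2^{-24}]$ and $-\eta\|x\|^2\ge-\eta S$. Each full move decreases $\Phi$ by $\eta\tau^2/4096$, so the number of full moves is at most $4096(\eta S+2^{-24}+\text{short increments})/(\eta\tau^2)$. Using $\eta S=2^{-40}/J$ and the choice of $\tau$ as the largest dyadic with $K^2\tau\le c_*\eta$, we get $1/\tau^2\lesssim K^4/\eta^2$. Combined with $K^2\le14400\kappa/s$, this yields $O(S\kappa^2J^2)$ per phase, with the $J^2$ coming from $H^2$ through the third schedule condition.

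Summing the geometric sequence of phases $S=n,n/2,\dots$ gives $O(nJ^2\kappa^2)$. \textbf{The main obstacle} is this bookkeeping of $\tau$ against $\eta$ and $K$. The per-phase bound must be proportional to $S$, not to $1/\eta$, and this requires exploiting the exact choice $\eta\propto1/(JS)$ in \eqref{eq:precision} together with $K^2\lesssim\kappa/s$.
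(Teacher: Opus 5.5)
Your proposal has genuine gaps in both halves. Both halves are repairable with ingredients you already mention, but as written neither the move count nor the invariant closes.

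\textbf{Move count.} You discarded the $\|x\|^2$ progress argument because you used only the generic bound $\tau\ge c_*\eta/(2H)$. The flatness bound \eqref{eq:flat} removes the dependence on $\eta$. With $s>S/2$ one has $K^2\le14400\kappa/s<28800\kappa/S$. Since $\eta=1/(2^{40}JS)$, this gives $\eta/K^2>1/D_*$ with $D_*=2^{41}\cdot14400\,J\kappa$. For $n\ge337$ one has $\kappa\ge J$ and $H\le6J$, so $D_*>H^2$. Hence all three conditions of \eqref{eq:stepschedule} admit $\tau\ge c_*/(2D_*)=\Omega((J\kappa)^{-1})$, uniformly over phases. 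Each full move adds $\tau^2\|h\|^2\ge\tau^2/1024$ to $\|x\|^2\le n$, which gives $O(nJ^2\kappa^2)$ full moves directly; this is exactly the paper's proof.

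The potential-based count you substitute does not give the stated bound. The budget $2^{-24}$ divided by $\eta\tau^2$ is $2^{16}JS/\tau^2=O(SJ^3\kappa^2)$ per phase, one factor of $J$ too many. Even the sharper budget $\phi<3\cdot2^{-40}$ still leaves $3JS/\tau^2$. Also, the $J^2$ comes from $D_*^2$, that is, from $\eta\propto1/(JS)$ against $K^2\lesssim\kappa/S$. It does not come from $H^2$: the check $D_*>H^2$ only shows that the third schedule condition costs nothing extra.

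\textbf{Invariant.} Your treatment of phase resets is wrong in two ways. First, since $\eta=1/(2^{40}JS)$ and $S$ at least halves, $\eta$ \emph{increases} at a reset. Second, the ridge $\eta\one\one^\tp$ is part of $M$, so a reset raises $L$ itself by up to $(\eta'-\eta)s\le\eta'S'=2^{-40}/J$. You never account for this jump in $L$; you only consider the $-\eta\|x\|^2$ term, and with the wrong sign.

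Separately, the step $(L-1)_+^3\le\Phi+\eta n$ with $\eta n\le2^{-40}$ holds only in the first phase. Later, $\eta n=n/(2^{40}JS)$ can exceed $2^{-24}$ by an arbitrary factor, so the global bookkeeping does not close. It could be rescued by noting that frozen coordinates contribute $1$ each to $\|x\|^2$, so $n-\|x\|^2\le s$, but you would still have to pay for every ridge jump.

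The paper instead restarts the accounting for $\phi=(L-1)_+^3=\Phi+\eta\|x\|^2$ in each phase:
\begin{itemize}
\item $\Phi$ rises only through short moves, at most $S$ of them, each by at most $\eta\tau^2/2^{16}\le\eta/2^{16}$.
\item $\|x\|^2$ grows by at most $S$ within a phase.
\item Hence $\phi$ rises by at most $(1+2^{-16})\eta S=(1+2^{-16})/(2^{40}J)$ per phase.
\item A reset adds at most $2^{-40}/J$, because $\phi'<1$ below $1+1/128$.
\end{itemize}
The factor $1/J$ built into $\eta$ is what makes the total over at most $J$ phases uniformly $<3\cdot2^{-40}<2^{-24}$. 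A loss of $2^{-40}$ per phase summed over $O(\log n)$ phases, as you propose, is not uniform in $n$.
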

\begin{proof}
Write $\phi=(L-1)_+^3$. Within a phase at most $S$ short moves occur, and
the increase in $\|x\|^2$ is at most $S$, so the one-step bound
\eqref{eq:onestep} bounds
the increase of $\phi$ over every prefix of the phase by
$(1+2^{-16})\eta S=(1+2^{-16})/(2^{40}J)$. A phase reset from $(\eta,S)$ to $(\eta',S')$
changes $L$ by at most $(\eta'-\eta)S'\le1/(2^{40}J)$; if the preceding state has
$L<L_*$, the tentative value after the reset is below $1+1/128$, where
$\phi'<1$, so a reset increases $\phi$ by at most $1/(2^{40}J)$. Since
there are at most $J$ phases and fewer than $J$ resets, every prefix has
\[
 \phi<3\cdot2^{-40}<2^{-24}=(L_*-1)^3.
\]
These bounds use the invariant only at the anchors of earlier moves and
at the states preceding earlier resets, and cleanup does not increase
$L$ (Lemma~\ref{lem:cleanup}); induction over the moves therefore proves
$L<L_*$ at every state.

For the count, note that $S/2<s\le S$ during a phase. By the flatness
bounds \eqref{eq:flat}, $\eta/K^2>1/D_*$, where $D_*=2^{41}\cdot14400J\kappa$,
and for $n\ge337$ we have $\kappa\ge J$, $H\le6J$, and $D_*>H^2$. The
largest dyadic step therefore satisfies $\tau\ge c_*/(2D_*)$, so every
full displacement has norm at least $\tau/32=\Omega((J\kappa)^{-1})$.
Because of the exact constraint $x_V^\tp h=0$, each move increases the
squared norm by exactly its squared length,
$\|x\pm th\|^2-\|x\|^2=t^2\|h\|^2$; since the total increase is at most
$n$, only $O(nJ^2\kappa^2)$ full moves occur, and there are at most $n$
short moves. The same argument gives $O(SJ^2\kappa^2)$ moves for a phase
that begins at size $S$.
\end{proof}

\subsection{Terminal rounding and the discrepancy bound}\label{sec:terminal}
Once $s\le336$, a quadratic objective controls both the row error
$A_V(\eps-x)$, which the large rows need, and the displacement $\eps-x$,
which the medium rows need. We give the displacement term weight $19/6$
and round by conditional expectation. Independent signs with means $x_j$ have
expected quadratic cost
\[
 \sum_{j\in V}(1-x_j^2)(\|A_{:j}\|_2^2+19/6)\le(25/6)s.
\]
Fixing one sign at a time, each time choosing the smaller conditional
quadratic expectation, therefore yields signs that satisfy
\[
 \|A_V(\varepsilon-x)\|_2^2
 +\frac{19}{6}\|\varepsilon-x\|_2^2
 \le\frac{25}{6}s\le1400.
\]
Maintaining the partial vector costs $O(m_0s+s)$ arithmetic work. Hence
the row error satisfies $\|A_V(\eps-x)\|_\infty\le\sqrt{1400}$ and the
displacement satisfies $\|\eps-x\|_2\le\sqrt{8400/19}$.

To turn these bounds into the constant, recall $A_\Delta=13/2$ from
Lemma~\ref{lem:tracking}, so that
\[
 2RA_\Delta=13R=\frac{92807}{5000}.
\]
The two required terminal inequalities are exact rational comparisons:
\[
 \left(H_0+13R\right)^2>1400,\qquad
 (b_0+13)^2>\frac{8400}{19}.
\]
Indeed $b_0+13=21.0379529411765\ldots$. The first comparison gives
$\sqrt{1400}<C_*$ for the large rows, and the second makes the coefficient
of $\sqrt{q_i}$ in the medium-row bound below negative:
$\sqrt{8400/19}-b_0-2A_\Delta<0$.

A large row has zero tracked sum $d_i=0$, so the row-error bound gives
$|(A\eps)_i|\le\sqrt{1400}<C_*$. For a medium row, the barriers and the
tracking lemma give
\begin{align*}
 |(A\varepsilon)_i|
 &\le H_0-b_0r_i+\sqrt{8400/19}\sqrt{q_i}
             +2A_\Delta(R-\sqrt{q_i})\\
 &\le C_*+
   \bigl(\sqrt{8400/19}-b_0-2A_\Delta\bigr)\sqrt{q_i}<C_*.
\end{align*}
Once $q_i=0$, the tracked sum $d_i$ no longer changes, so for a completed row the
last barrier gives $|d_i|\le H_0-b_0r_i<H_0$, and
Lemma~\ref{lem:tracking} with $q_i=0$ gives
$|(A\eps)_i|<H_0+2A_\Delta R=C_*$; a row that passes directly from large
to completed has $(A\eps)_i=0$. For $n\le336$ the same quadratic rounding works directly
at $x=0$. Consequently
\[
 \|A\varepsilon\|_\infty<C_*:=H_0+13R
 =\frac{1877}{50}=37.54.
\]
Rows discarded by the input scan satisfy $|(A\eps)_i|\le37<C_*$ for
every signing (Section~\ref{sec:scan}).

\section{Running time: screening and the arithmetic bound}\label{sec:screening}
The number of moves is now bounded, but their cost still depends on how
many rows are evaluated. Forming every row's Gram contribution at every
anchor would cost $O(m_0s^2)$ per move, and since the input scan can
leave $O(n^2)$ rows, this estimate does not give the quartic bound. We
therefore evaluate only the rows whose weights may matter and certify how
long the others can wait; a bound on the total squared tracked sum limits
the number of rows that require immediate work. The screening, the
clock, and the maintained statistics follow
Li~\citep[Sections~4.3, 5, and~9]{li2026fast}, including their elementary
identities; the tracked-sum constraints of Section~\ref{sec:rowenergy}
take the place of Li's quadratic potential charge.

\subsection{Certifying that a row can wait}\label{sec:warmcold}
We certify that both weights of a row are negligible from an upper
bound on its barrier. Let $\eta_0=1/(2^{40}Jn)$, $\delta_0=\eta_0^5/2^{100}$, and
set
\begin{equation}\label{eq:screeningprecision}
 \rho_c=\frac{\delta_0}{2^{22}(m_0+1)},\qquad
 2^{\ell_c}\ge\frac{A_0}{\rho_c},\qquad
 C_c=a_0+b_0+\ell_c/\beta,
\end{equation}
where $\ell_c$ is the least nonnegative integer satisfying the displayed
test. These are rational constructions, and by the row count
\eqref{eq:rowpreprocess},
$\ell_c,C_c=O(\log(2+n))$. Give each medium row the score
\[
 \chi_i=|d_i|+C_c r_i.
\]
If $\chi_i\le H_0$, then $u_{i\sigma}\le-\ell_c/\beta$ for both signs, so
the row's weights $a_{i\sigma}=A_0e^{\beta u_{i\sigma}}$ are at
most $\rho_c$ and it has no protected pair. A row is called cold if a
certificate proves $\chi_i\le H_0$ at the current anchor, and warm
otherwise.

Rows change class with hysteresis, whose slack
Section~\ref{sec:clock} uses to space out the inspections of cold rows.
All current warm rows are refreshed before a move, and a warm row is
demoted only when $\chi_i\le H_0-2$. When a cold row is checked, its
certificate is renewed if $\chi_i\le H_0-1$, and otherwise the row is
promoted; a newly medium row is checked once by the latter rule. Expired
certificates are processed before the direction is computed, and
Section~\ref{sec:clock} gives the exact certificate and its expiration
time.

When the matrices are formed numerically, cold weights are set to zero
and all warm weights are approximated with total absolute error at most
$\delta/2^{22}$. The omitted cold weights sum to less than
$2m_0\rho_c<\delta_0/2^{21}\le\delta/2^{21}$, so the total error stays
below the weight allowance $\delta/2^{20}$. The computed normalized
matrix $\bar K$ and the curvature proxy $\bar Q$ are therefore formed
from warm weights only, and the flat test list consists of the
coordinate vectors and the warm medium gradients. The large-row
constraints, the exact tracked-sum tangency, and the high row-Gram
equations defined next are all retained.

\subsection{Why few rows require evaluation}\label{sec:rowenergy}
A warm row has a large tracked sum $|d_i|$ or a large scale $r_i$. Since
the scales satisfy $\sum_ir_i^2\le\theta^2s$, it remains to bound the
tracked-sum energy $\sum_id_i^2$; one tangency
equation and fewer than $s/4094$ row-Gram equations do so, and these are
the dimensions reserved in \eqref{eq:finaldimension}.

Let $\mathsf A$ be the matrix of current retained rows $b_i^\tp$, and let
$\mathsf c$ be the vector of offsets $c_i$, including those of completed
rows. Define
\[
 \mathsf Q=\mathsf A^\tp\mathsf A,\qquad \mathsf u=\mathsf A^\tp\mathsf c,\qquad
 \mathcal E=\|\mathsf c+\mathsf A x_V\|^2=\sum_i d_i^2,
\]
and call $\mathcal E$ the \emph{tracked-sum energy}. Then
$\trace\mathsf Q\le s$ and $\|\mathsf Q\|\le s$. At each direction
computation we impose, with the other exact constraints,
\begin{equation}\label{eq:chargegradient}
 (\mathsf Qx_V+\mathsf u)^\tp h=0.
\end{equation}
This makes the first derivative of $\mathcal E$ zero.

The high row-Gram equations then control the second derivative of
$\mathcal E$. We diagonalize $\mathsf Q/(s+1)$ as in \eqref{eq:interface}, obtaining
computed columns $V_{\mathsf Q}$ and an exactly orthogonal witness, both
within $\xi_{\mathsf Q}/4$ of $\mathsf Q$ after rescaling, where
$\xi_{\mathsf Q}=1/(2^{20}(s+1))$. We then clip negative reported
eigenvalues to zero and impose $V_{\mathsf Q}^\tp h=0$ for the computed
columns whose reported eigenvalues are at least 4095. Because
$\mathsf Q$ is PSD, every negative reported eigenvalue has magnitude at
most $\xi_{\mathsf Q}/4$, so clipping leaves total matrix error below
$\xi_{\mathsf Q}$, the witness error is also below $\xi_{\mathsf Q}$, and
the clipped trace is at most $s+s\xi_{\mathsf Q}$. Hence fewer than
$s/4094$ new columns arise, and every $h$ with $V_{\mathsf Q}^\tp h=0$
for these columns satisfies
\begin{equation}\label{eq:chargequadratic}
 h^\tp\mathsf Qh\le4096\|h\|^2.
\end{equation}
Indeed, the component of such a vector along the corresponding
orthogonal witness columns is at most $\xi_{\mathsf Q}\|h\|$, and on the
complementary witness subspace the reported eigenvalues are below 4095,
so the true quadratic form is at most
$[4095+(s+\xi_{\mathsf Q})\xi_{\mathsf Q}^2+\xi_{\mathsf Q}]\|h\|^2<4096\|h\|^2$.

These constraints are part of $P$, so the curvature bound
\eqref{eq:potentialcurvature} still applies, and they give the exact increment identity
\begin{align}\label{eq:rowenergyincrement}
 \mathcal E(x+th)-\mathcal E(x)
 &=t^2h^\tp\mathsf Qh\notag\\
 &\le4096t^2\|h\|^2
 =4096\bigl(\|x+th\|^2-\|x\|^2\bigr).
\end{align}
The first equality uses the tangency \eqref{eq:chargegradient} and the last uses the
exact constraint $x_V^\tp h=0$; both identities hold for either sign of
the move. Initially every $d_i=0$ and $x=0$, cleanup preserves every $d_i$ and
the full vector $x$, and phase resets change neither, so throughout the
walk
\begin{equation}\label{eq:rowenergybound}
 \sum_i d_i^2=\mathcal E\le4096\|x\|^2\le4096n.
\end{equation}

Every refreshed warm row has $\chi_i>H_0-2$, so either
$|d_i|>(H_0-2)/2$ or $C_cr_i>(H_0-2)/2$. Combining
the tracked-sum bound \eqref{eq:rowenergybound} with $\sum r_i^2\le\theta^2s$ therefore
bounds the set $\mathcal W$ of refreshed warm rows by
\begin{equation}\label{eq:warmcount}
 |\mathcal W|\le
 \frac{16384n+4C_c^2\theta^2s}{(H_0-2)^2}
 =O(n+s\log^2(2+n)).
\end{equation}
The direction and progress arguments do not use this count; it enters
only the cost analysis.

Omitting the cold weights also removes their gradients from the flat
test list, so the finite-step argument of Section~\ref{sec:steps} must be
rechecked for the full potential. Appendix~\ref{sec:coldstep} compares
the true matrix curve with the curve in which the cold terms are frozen
at the anchor: the difference is within the precision allowance, so the
one-step bound \eqref{eq:onestep} holds for the full $\Phi$, every barrier stays
feasible, and Lemma~\ref{lem:budget} applies to the screened algorithm.

\subsection{When a cold row must be inspected}\label{sec:clock}
To schedule the inspection of cold rows, we use Li's motion
clock~\citep{li2026fast}: maintain a rational clock $\vartheta$, initially zero, that adds $|t|$
for every move $th$; since $\|h\|\le1/16$, the clock dominates path
length. If a row was certified at time $\vartheta_i$ with score $\chi_i$
and scale $r_i>0$, then while it remains cold,
\[
 \chi_i(\vartheta)\le\chi_i(\vartheta_i)
                        +r_i(\vartheta_i)(\vartheta-\vartheta_i).
\]
Moves change its tracked sum at speed at most $\|b_i\|\le r_i$, and
cleanup preserves that sum and only lowers its scale. We store the
rational expiration time
\[
 \vartheta_i^{\rm exp}
 =\vartheta_i+\frac{H_0-\chi_i(\vartheta_i)}{r_i(\vartheta_i)}
\]
in a priority queue, with ties broken by row index. An old key remains
valid through support deletion and scale decrease; completed rows leave
the queue, and a row newly demoted from warm receives a new key. The
hysteresis rules leave every cold renewal at least one unit of score
slack, so successive due inspections of a row with entry scale
$r_i^{\rm ent}$ are at least $1/r_i^{\rm ent}$ apart in clock time,
even when an intervening warm period occurs.

The finite-step bound also controls the total elapsed clock time. In a
phase of initial size $S$, the exact constraint $x_V^\tp h=0$ and
$\|h\|\ge1/32$ give $\sum t^2\le1024S$, and the phase has
$O(SJ^2\kappa^2)$ moves, so Cauchy--Schwarz, including the short moves, gives
\begin{equation}\label{eq:clockbudget}
 \sum_{\rm phase}|t|=O(SJ\kappa),\qquad
 \Lambda:=\int d\vartheta=O(nJ\kappa),\qquad
 \mathcal I:=\int s(\vartheta)\,d\vartheta=O(n^2J\kappa).
\end{equation}
Here a drop in active dimension is assigned to the end of its move, so
that $s$ is nonincreasing along the clock.

Each inspection costs $O(s+\log(2+m_0))$, and the entry scales obey
$r_i^{\rm ent}\le\theta\|a_i\|_2$, whence
$\sum_i r_i^{\rm ent}\le\theta\sqrt{m_0n}$. For two consecutive due
inspections of a row at clock times $\vartheta'<\vartheta''$,
\[
 s(\vartheta'')\le r_i^{\rm ent}\int_{\vartheta'}^{\vartheta''}s(\vartheta)\,d\vartheta.
\]
The intervals for a fixed row are disjoint, so charging first
inspections separately and using the clock budget \eqref{eq:clockbudget}, the total
row-evaluation cost of due inspections is
\begin{equation}\label{eq:inspectioncost}
 O(m_0n)+\widetilde O(\sqrt{m_0}\,n^{5/2}).
\end{equation}
The queue work is smaller, at most
$\widetilde O(m_0+\sqrt{m_0}\,n^{3/2})$ plus the queue updates caused by
support events. Finally $\sqrt{m_0}\,n^{5/2}\le(m_0n^2+n^3)/2$, so the
inspection cost \eqref{eq:inspectioncost} is absorbed into $\widetilde O(m_0n^2+n^3)$.

The tracked-sum energy enters the direction computation only through its
gradient $2(\mathsf Qx_V+\mathsf u)$ and the quadratic form $\mathsf Q$.
Appendix~\ref{app:statistics} maintains $\mathsf Q$ and $\mathsf u$ under
deletions and freezes by exact $O(s)$ updates whose identities follow
Li~\citep{li2026fast}, so both are available at every anchor in $O(s^2)$
operations without evaluating any cold row. Masses, scales, and
deletions change only at support events, where all retained rows are
rescanned; together with initialization and class changes this costs
$\widetilde O(m_0n^2)$.

\subsection{Total operation count}
We now combine the three costs: forming directions from warm rows,
maintaining the retained supports, and inspecting expiring certificates.
With $|\mathcal W|$ warm rows at an anchor, each anchor costs
$\widetilde O(|\mathcal W|s^2+s^3)$ ordinary operations, which covers
evaluating warm rounding energies and derivatives, forming $\bar K$ and
$\bar Q$,
constructing the exact projector and negative basis, and flattening. By the
warm-row count \eqref{eq:warmcount} this is $\widetilde O(ns^2+s^3)$, and since a phase
beginning at $S$ has $O(SJ^2\kappa^2)$ moves, summing over the
geometrically decreasing phases gives $\widetilde O(n^4)$.

Initialization and support maintenance cost $\widetilde O(m_0n^2)$ and
due inspections cost \eqref{eq:inspectioncost}; since $m_0<n^2/37^2$,
both are absorbed by $\widetilde O(n^4)$. Terminal rounding costs
$O(m_0n+n)$, and adding the one-time $O(mn)$ scan proves the arithmetic
bound of Theorem~\ref{thm:algo}. The dimension, finite-step, cleanup, and
terminal arguments establish its discrepancy and move-count
assertions, which completes the proof.

\section{Practical execution and open problems}\label{sec:practical}
This section describes the routine we use in practice and the check that
certifies its output, reports its outcome on matched inputs, and closes
with the limitations of our results and the problems they leave open.

\subsection{A checked proposal routine}\label{sec:routine}
The finite construction proves termination, but its prescribed step
sizes are far smaller than those used in successful numerical runs: at
$m=n=10^4$, in the first phase with $K=1$, the step schedule \eqref{eq:stepschedule} gives
$\tau=2^{-114}$ and a displacement of at most $2^{-118}$. A direct test of
the final signing lets inexpensive candidates be accepted without
following that schedule.

\begin{algo}[Checked proposal algorithm]\label{alg:practical}
For a finite binary64 matrix $A$, use the following routine.
\begin{enumerate}
\item Generate eight sign vectors whose entries are independent uniform
random signs from a seeded generator, and compute their row sums.
Select the candidate with the smallest observed maximum absolute row sum,
breaking ties by proposal order.
\item Enclose every row sum of that candidate as described below.
Return the signs only if the resulting bound is strictly below $1877/50$.
\item If the check fails, run the numerical endpoint walk described
below, and apply the same output check.
Return its signs if certified; otherwise report failure.
\end{enumerate}
\end{algo}
The entry point is the function \texttt{verified\_sign} in the
repository~\repourl.
The endpoint fallback is a floating-point implementation of the walk
with the profile of Section~\ref{sec:state}. It uses the normalized
proxy \eqref{eq:genproxy}, the tracked-sum constraints, and the weighted
terminal rule of Section~\ref{sec:terminal}, but not the proved step
schedule: it tries the longest move allowed by the cube and the
barriers, evaluates the barriers and the spectral potential numerically
at the endpoint of the move, accepts the move if they satisfy the
invariants, and otherwise halves it.
It does not implement lazy warm/cold screening, and
because it has no exact finite fallback, Algorithm~\ref{alg:practical}
is a verified partial solver: it can report failure on an admissible
input.

\subsection{What the output check proves}
For a selected signing, multiplication by a sign is exact, so only the
additions need care. Starting from $\underline s_i=\overline s_i=0$, add
each signed entry in round-to-nearest arithmetic and then move the result
one unit in the last place outward with \texttt{nextafter}, toward
$-\infty$ for the lower endpoint and toward $+\infty$ for the upper
endpoint. Under IEEE binary64 arithmetic with gradual underflow, each
update encloses the exact sum of its two represented operands, and
induction gives
\[
 \underline s_i\le\sum_j a_{ij}\varepsilon_j\le\overline s_i.
\]
Let $C_-$ be a binary64 number rounded downward from $1877/50$. The
check accepts only if
\[
 \max_i\max(|\underline s_i|,|\overline s_i|)<C_-.
\]
This proves the strict bound for the exact sums of the represented
input, and nonfinite inputs and overflow cannot pass. The check needs no
unit-column assumption, no model of BLAS summation error, and no
assumption about the candidate's trajectory. Conversion to binary64
defines the matrix being certified, so the check does not certify an
unrepresented real matrix.

With eight candidates, proposal generation, evaluation, and this check
cost $O(mn+n)$ on acceptance; we prove no uniform acceptance
probability. Replacing step 3 by Algorithm~\ref{alg:finite} would give a
complete algorithm with the same $37.54$ guarantee and
$O(mn)+\widetilde O(n^4)$ worst-case arithmetic cost. This is the
mathematical reason to allow a bounded practical attempt before the
finite fallback, even though the available code has not yet implemented
that exact fallback.

\subsection{Outcome on matched inputs}
On a suite of 48 matched cases, in which every method receives the same
matrices and seeds, every execution of Algorithm~\ref{alg:practical}
returned at the proposal stage, with median time $0.0077$ seconds. Its
median paired speedup over the baseline of
Appendix~\ref{app:experiments}, our earlier walk with a quadratic profile
and directions of Gram--Schmidt-walk type, was about $630$. That walk,
however, reached a smaller discrepancy in 39 of the 48 cases. Because
random signs met the target on every case, the fallback walk never ran:
the speed advantage concerns the time to a certified output below
$37.54$, not the walk itself or the minimum observed discrepancy.
Appendix~\ref{app:experiments} describes the suite, the verification of
the code, and a comparison of seven implementations.

\subsection{Discussion and open problems}\label{sec:discussion}
The existence bound of Theorem~\ref{thm:cosine} and the algorithmic
bound of Theorem~\ref{thm:algo} differ by a factor of more than five, and
the two arguments have little in common: the existence proofs propagate
convex sets and densities backward through the vectors, whereas the walk
moves a fractional signing forward under row barriers and a spectral
potential. The constant $37.54=H_0+13R$ is the sum of the barrier
height, which protects the retained row sums, and the tracking allowance
for deleted coefficients. We do not know whether either existence
argument can be made algorithmic, or how far the constant of a spectral
walk can be lowered.

The arithmetic bound is quartic because every anchor builds a dense
feasible negative basis and the coupled response, at cost
$\widetilde O(ns^2+s^3)$, and there are $\widetilde O(n)$ anchors. Two
partial routes to a faster bound were explored during this work and are
recorded, with their proofs and checks, in the repository. A randomized
variance clock removes the maintenance of the tracked-sum Gram matrix $\mathsf Q$ and
gives $O(mn)+\widetilde O(n^3)$ expected work for row processing alone,
but the direction construction remains quartic, so it isolates a possible
optimization rather than proving a cubic signing theorem. An earlier
version of the construction, with discrepancy below $67$, admits the
bound $O(mn)+\widetilde O(m_{66}n^2+n^{\omega+1})$ if fast spectral and
rank primitives and exact square roots are assumed as black boxes, where
$m_{66}$ counts the rows with $\ell_1$ norm above $66$ and $\omega$ is
the matrix multiplication exponent; these fast primitives are not known
to preserve the exactly orthogonal normalized reconstruction that the
dimension count of Section~\ref{sec:dimension} requires, so we do not
attach that exponent to Theorem~\ref{thm:algo}. Whether the $37.54$
construction runs in $\widetilde O(n^{\omega+1})$ or $\widetilde O(n^3)$
operations is open, as is a polynomial bound on the bit length of its
rational states, which Li~\citep{li2026fast} proves for that
algorithm.

On the practical side, the proved step schedule is far smaller than the
steps that succeed numerically, and the available implementation takes
large endpoint steps that it accepts by numerical tests; the exact finite
construction has not been implemented, so Algorithm~\ref{alg:practical}
is a verified partial solver. Installing the exact construction as its
fallback would make it a complete solver with the worst-case guarantee of
Theorem~\ref{thm:algo} and, whenever a proposal is accepted, linear cost.
Every matched trajectory in Appendix~\ref{app:experiments} stayed in the
low branch, so the high-branch response has been exercised only by a
unit test on a supplied state, and no uniform acceptance probability is
proved for the proposal stage. Finally, the terminal objective does not
minimize the maximum row sum, and Appendix~\ref{app:experiments} shows
how this inflates the observed discrepancy of the filtered walk; a
terminal rule with both a proof and a small observed discrepancy would
improve every configuration in Table~\ref{tab:history}.

\noindent\rule{\linewidth}{0.3pt}

\subsection*{Statement on AI usage}
We made extensive use of GPT-6 in developing and checking the arguments
and revising the exposition. Our prompts about positive definiteness and
autocorrelations of the cosine product (its integrals against
its translates) led to the discovery of the quadratic-energy viewpoint.

\subsection*{Acknowledgements}
SS thanks the Alexander von Humboldt (AvH) Foundation for their generous
support via a Humboldt Professorship in AI.

\appendix
\section{Details for the existence proofs}\label{app:existence}
This appendix collects the details deferred from
Section~\ref{sec:existence}: the regularity and limiting steps in the proof of
Lemma~\ref{lem:section}, the residue calculation behind
Lemma~\ref{lem:cosine-transform}, and the rational certificate for the
enclosure of $C_{\mathrm{cos}}$ in Theorem~\ref{thm:cosine}.

\subsection{Regularity and limits in the proof of Lemma~\ref{lem:section}}\label{app:technical}
The change of variables $x=Q^{1/2}z$ gives
$\lambda_Q(D)=\lambda_I(Q^{-1/2}D)$, so the cited eigenvalue
convexity applies to every $Q\succ0$. Approximation from inside by
smooth convex domains handles nonsmooth boundaries. If $(-b,b)$ is
the height range of $B$, then $1\le M<b$, so $D_1$ and $D_M$ are
nonempty. For almost every $s$ with $p(s)>0$, the zero extension of
$\Phi(\cdot,s)/\sqrt{p(s)}$ lies in $H_0^1(D_s)$.

The admissible matrices form a compact set, and $\lambda_Q$, an
infimum of continuous linear functions of $Q$, attains its maximum.
The normalized positive ground state is unique; comparing nearby
minimizers in each other's energies gives the derivative used in the
proof. The uniform energy bound yields a subsequence with weakly
convergent first derivatives and convergence in $L^2$ norm, preserving
normalization and the zero boundary values. Each limiting derivative
has $L^2$ norm at most the lower limit of the approximating norms,
giving the stated energy bound.

\subsection{Evaluating the scalar transform}\label{sec:cosine-evaluation}
We derive the formula~\eqref{eq:cosine-transform} of
Lemma~\ref{lem:cosine-transform}. For $0<\theta<\pi/2$ and $\omega>0$,
a residue calculation gives
\begin{equation}\label{eq:cosine-hyperbolic}
 I_\theta(\omega):=\int_{\reals}
       \frac{e^{i\omega u}}{\sinh^2u+\sin^2\theta}\,du
 =\frac{2\pi}{\sin(2\theta)}
       \frac{\sinh((\pi/2-\theta)\omega)}{\sinh(\pi\omega/2)}.
\end{equation}
Indeed, integrate over the rectangle with vertices
$-R,R,R+i\pi,-R+i\pi$ and let $R\to\infty$.
The vertical integrals tend to zero, and the top integral is
$-e^{-\pi\omega}$ times the bottom integral.
The residues at $i\theta$ and $i(\pi-\theta)$ sum to
\[
 \frac{e^{-\theta\omega}-e^{-(\pi-\theta)\omega}}
      {i\sin(2\theta)}.
\]
The residue theorem now yields~\eqref{eq:cosine-hyperbolic},
with $\omega=0$ obtained by continuity.

For $0<\delta<\pi$, put $\theta=\delta/2$ and change variables in
$A_\delta(\xi)$ by $\tan x=\tanh u/\tan\theta$.
The common support maps onto $\reals$, and
\[
 \frac{\cos(x+\theta)}{\cos(x-\theta)}=e^{-2u},\qquad
 \cos(x+\theta)\cos(x-\theta)\,dx
 =\frac{\sin^3\theta\cos^3\theta}
        {(\sinh^2u+\sin^2\theta)^2}\,du.
\]
It follows that
\[
 A_\delta(\xi)=\frac2\pi\sin^3\theta\cos^3\theta
 \int_{\reals}\frac{e^{-4i\xi u}}
        {(\sinh^2u+\sin^2\theta)^2}\,du.
\]
Differentiate~\eqref{eq:cosine-hyperbolic} with respect to $\theta$
and multiply by $-1/\sin(2\theta)$.
Taking $\omega=4|\xi|$ evaluates this last integral and gives
\eqref{eq:cosine-transform}.
Differentiation under the integral is justified by an integrable
bound on compact subintervals of $0<\theta<\pi/2$;
$\delta=0$ follows directly from the definition.

\subsection{Certifying the numerical bound}\label{sec:cosine-certificate}
The enclosure of $C_{\mathrm{cos}}$ is checked in rational arithmetic
alone. Put $z=9\pi^2/(2C^2)$; expanding the exponential
in~\eqref{eq:cosine-G} and integrating term by term gives
\[
 G(\pi/C)=\frac{18\sqrt{2\pi}}C S(z),\qquad
 S(z)=\sum_{j=0}^{\infty}
       \frac{(-z)^j}{j!(2j+1)(2j+2)(2j+3)}.
\]
The Taylor polynomials of $e^{-x}$ of degrees twenty and twenty-one
bound it above and below for $x\ge0$, so integration against the
nonnegative weight $(1-r/3)^2$ encloses $S$ between two finite sums;
Machin's identity $\pi=16\arctan(1/5)-4\arctan(1/239)$ encloses $\pi$
by rationals, and squaring removes the radical. The script
\texttt{verify\_cosine\_constant.py} in the repository~\repourl{}
performs these comparisons with exact fractions and certifies
\[
 G(\pi/6)>1,\qquad
 G(\pi/6.9012574)>1,\qquad
 G(\pi/6.9012575)<1,\qquad
 G(\pi/6.9013)^2<\tfrac{99999}{100000}.
\]
Since $G$ is strictly increasing, these inequalities prove the enclosure
in Theorem~\ref{thm:cosine} and the bound $C_{\mathrm{cos}}<6.9013$.

\section{Deferred proofs for the walk}\label{app:walk}
This appendix holds the verifications deferred from
Sections~\ref{sec:state}, \ref{sec:geometry}, and~\ref{sec:screening}:
the proof that cleanup preserves the invariants, the rational certificate
for the parameters, the comparison that lets cold rows be omitted from
the finite-step argument, and the maintenance of the row statistics.

\subsection{Cleanup preserves the invariants}\label{app:cleanupproof}
\begin{proof}[Proof of Lemma~\ref{lem:cleanup}]
We check that introducing a medium row costs no more than keeping it
large, and that the other cleanup operations only help. A newly medium
row has $d_i=0$. At its entry scale $r\le R$ we have
$u\le-H_0/r+a_0+b_0<-3$, and
\[
 2r^2w\le\frac{2A_0}{r^2}e^{\beta(-H_0/r+a_0+b_0)}<B_0.
\]
The middle expression is increasing in $r$ up to $R$, since
$\beta H_0/R>2$, so it suffices to check the inequality at $R$, where it
follows from
\begin{equation}\label{eq:initcertificate}
 \sum_{k=0}^{47}\frac{X^k}{k!}>\frac{2A_0}{B_0R^2},
 \qquad X=\beta(H_0/R-a_0-b_0)=2.107078495\ldots.
\end{equation}
All quantities in this finite check are rational. Because
$pp^\tp\preceq q\diag(p)$, the new medium row is then covered in PSD order
by its former large-row deposit.

At fixed scale, deletion decreases $E$, $p$, $u$, and $w$ while
preserving $d$. For decreasing scale at fixed row data,
\[
 r\partial_r u=b_0-u-E/r^2,\qquad
 r\partial_r\log w=\beta(b_0-u-E/r^2)-4.
\]
Both quantities are nonnegative as long as $u\le0$ and $r^2\ge q$,
since $\beta(b_0-a_0)=4$. These conditions persist during the decrease,
so the existing medium Gram terms decrease entrywise, and an entrywise
decrease cannot increase the top eigenvalue of a nonnegative symmetric
matrix. Freezing removes a principal submatrix, and the removed energy
terms vanish because $f(\pm1)=0$. Since cleanup leaves the vector $x$
itself unchanged, $\Phi$ cannot increase either. Finally, the column
norm condition and the deposit comparison give the initialization
bound.
\end{proof}

\subsection{The parameter certificate}\label{app:certificate}
The script \texttt{verify\_constant.py} in the repository~\repourl{}
checks the following inequalities with exact fractions; the decimals are
displayed for orientation only.

The exponent in \eqref{eq:initcertificate} is
\[
 X=\beta(H_0/R-a_0-b_0)=2.10707849502731\ldots.
\]
The exact positive Taylor certificate
\[
 \sum_{j=0}^{47}\frac{X^j}{j!}>\frac{2A_0}{B_0R^2}
\]
has relative margin $0.000992324074770956\ldots$, and moreover
$\beta H_0/R>2$ and $-H_0/R+a_0+b_0<-3$.

With the tangent at $1/4$ fixed in Section~\ref{sec:geometry}, the
coefficient of $\epsilon s/\lambda_*$ in Lemma~\ref{lem:newjoint} is
$2a_t=50/23$. The true high block and the
computed low block are separated by more than $0.029$, and
Appendix~\ref{app:proxy} includes the resulting overlap error
$2^{14}\delta$ in the finite accuracy allowance.

The deposit condition~\eqref{eq:depositcondition} and the final
fraction~\eqref{eq:finaldimension} are the two displayed comparisons of
Section~\ref{sec:dimension}. The same script checks the terminal
inequalities of Section~\ref{sec:terminal} and the profile
inequalities~\eqref{eq:profilecontracts}.

\subsection{Cold rows do not spoil the larger finite steps}\label{sec:coldstep}
The finite-step argument must survive the omission of the cold
gradients from the flat test list, because the unweighted directional
derivatives of cold rows need not satisfy the small $K$ bound. We use a
comparison that requires only their tiny total weight. At a fixed anchor, write the full true Gram contribution as
$W=W_{\rm warm}+W_{\rm cold}$, and for the analysis of this one move
define
\[
 M_{\rm fr}(t)=B+\eta\one\one^\tp+W_{\rm cold}(0)+W_{\rm warm}(t).
\]
This is an analysis curve, not a matrix that the algorithm computes.
Since $M_{\rm fr}(0)=M(0)$, its Perron eigenvector and gap at zero are
those of the full matrix. Its fixed cold contribution is PSD and
entrywise nonnegative, and every varying weight is warm and satisfies
\eqref{eq:logcurve}, so Lemma~\ref{lem:generalizedperron} applies to this
curve. The coupled-response argument applies as well, because its Gram
factor uses only the warm terms and is still dominated by the full
anchor matrix. The proxy's diagonal comparison, the spectral trace
count, and the numerical leakage bounds are unchanged. Relative to this
curve with frozen cold terms, omitting the cold derivatives introduces
no derivative error.

It remains to compare the two curves. For a cold pair on a move inside
the cube, the global gradient bound gives $|\beta g^\tp h|\le25/128$, and
since the coordinate flatness holds for all rows,
$|\ell''(t)|\le K^2/2\le1/2$. Consequently, for $|t|\le c_*=2^{-56}$,
$|\ell(t)|\le(25/128)|t|+t^2/4<|t|/4$ and $|e^{\ell(t)}-1|\le|t|$, and
summing the positive Gram factors gives
\begin{equation}\label{eq:coldmotion}
 \|M(t)-M_{\rm fr}(t)\|\le\frac{\delta_0}{2^{21}}|t|.
\end{equation}
On these steps both spectral arguments lie below two, where
$z\mapsto(z-1)_+^3$ is 3-Lipschitz, so the two spectral potentials
differ by at most $3\delta_0|t|/2^{21}$, and the quadratic term
$-\eta\|x+th\|^2$ is the same on both curves.

We now apply the frozen-cold version of \eqref{eq:potentialcurvature}
together with the exact tangency $x_V^\tp h=0$. The first-order response
error and the additional term from \eqref{eq:coldmotion} are together at
most $(2^{21}\delta/\eta)|t|$, and the precision and step schedules give
$2^{21}\delta/\eta\le\eta\tau/2^{16}$ (use $\tau\ge2^{-57}\eta/H$,
$\eta H<1$, and $\delta=\eta^5/2^{100}$). This proves \eqref{eq:onestep}
for the full $\Phi$. The barriers also stay feasible: protected and warm
unprotected barriers by the direction argument, and a cold barrier
because it starts below $-\ell_c/\beta$ and changes by at most
$\|g\|\|h\||t|<|t|$.

These estimates justify using the full potential in
Lemma~\ref{lem:budget}, whose phase and move-count proof therefore
covers the screened algorithm.

\subsection{Maintaining the tracked-sum statistics without scanning all rows}\label{app:statistics}
We maintain $\mathsf Q$ and $\mathsf u$ so that the gradient
$2(\mathsf Qx_V+\mathsf u)$ and the form $\mathsf Q$ are available at every anchor in $O(s^2)$ operations without
evaluating any cold row.

We initialize $\mathsf Q$ by classical multiplication in $O(m_0n^2)$
operations, with $\mathsf u=0$ initially. For a deliberate deletion of the entry
$a=b_{ij}$, let $b_i$ and $c_i$ denote the old row and offset; expanding
the new retained row $b_i-ae_j$ and new offset $c_i+ax_j$ gives the
exact updates
\begin{align}\label{eq:statupdates}
 \mathsf Q'&=\mathsf Q-a(e_jb_i^\tp+b_ie_j^\tp)+a^2e_je_j^\tp,\notag\\
 \mathsf u'&=\mathsf u+ax_jb_i-ac_ie_j-a^2x_je_j.
\end{align}
They cost $O(s)$. Freezing coordinate $j$, with $V'=V\setminus\{j\}$,
uses instead
\[
 \mathsf Q'=\mathsf Q_{V',V'},\qquad \mathsf u'=\mathsf u_{V'}+x_j\mathsf Q_{V',j},\qquad
 c_i'=c_i+b_{ij}x_j.
\]
This formula uses the old $\mathsf Q$, and \eqref{eq:statupdates} is not also
applied to the same freezing event.

Active coordinates are stored in a packed list with their original
labels. A frozen coordinate is swapped with the last active one and its
row and column are dropped, which updates the statistics in $O(s)$
without copying the full principal submatrix; the corresponding
row-record updates are included in the support scan. Tie breaking
continues to use the original column labels, and the large-row diagonal
$B$ is also maintained at support events.

There are at most $n$ freezing events and $m_0n$ deliberate deletions.
At a freezing event one may simply scan all retained rows and recompute
their masses and scales, at cost $O(m_0n)$ for that event. Each
deliberate deletion can be charged an $O(n)$ rescan and statistic
update, and scale searches and queue changes have logarithmic cost.
Initialization, support maintenance (including these rescans), and
class changes therefore cost $\widetilde O(m_0n^2)$ together. After a move
that freezes no coordinate, supports, masses, and scales are unchanged,
so no cleanup scan over all rows is needed.

\section{Finite accuracy for the normalized response}\label{app:proxy}
The main direction argument uses the computed form \eqref{eq:genproxy}
and reserves an error of $s/10000$ in its dimension count. This appendix
specifies the rational computation and proves that its errors fit those
allowances. Exact equations must be kept distinct from approximate
spectral data throughout, because every move must remain exactly
feasible.

\subsection{Computing the proxy}
As Section~\ref{sec:screening} specifies, approximate the nonnegative
warm weights with total absolute error at most $\delta/2^{22}$ and set
cold weights to zero. Rational Jacobi rotations return an exactly
orthogonal $V$ and a diagonal $D$ with
\begin{equation}\label{eq:interface}
 \|H-VDV^\tp\|\le\xi
\end{equation}
for any bounded-norm symmetric $H$ and prescribed rational $\xi>0$. We
apply this to the approximate $M$ with accuracy $\delta/2^{20}$,
clipping negative reported eigenvalues. If the largest reported
eigenvalue is at most $1-\delta$, then $L\le1$ and the low branch needs
no response; otherwise $L>1-2\delta>\ell_0$, and we use the following
high-branch construction.

In the high branch, first compute the approximate top pair
$(\lambda_*,\bar v)$ of $M$ by rational Jacobi rotations, with
$\|M-N_M\|<\delta$ and $\|\bar v-v\|\le3\delta/\eta$ after orientation. Next
approximate the positive diagonal $(\lambda_*I-B)^{-1/2}$ by a rational
diagonal matrix $\bar S$ with absolute error at most $\delta/2^{20}$. Square
roots are not operation primitives, so we use bisection: the entries lie
in a fixed bounded interval, and bisection of positive square roots to
this accuracy costs $O(s\log(1/\delta))$ field operations and
comparisons. Form
\[
 \bar K=\bar S(\bar W+\eta\one\one^\tp)\bar S
\]
apply rational Jacobi diagonalization at accuracy $\delta/4$ to obtain
an exactly orthogonal matrix $V_K$ and a diagonal $D_K$, and clip
negative reported diagonal entries to zero; the reconstructed matrix
$N_K$ is then within $\delta$ of $\bar K$. Finally form $k_*$, the high set $F$, and
$R_K$ as in Section~\ref{sec:geometry}; the low denominators
$k_*-(D_K)_{jj}$ are then bounded below by $(7/25)k_*$.

Compute, using only the warm pairs,
\[
 \bar U=\sum_r\bar a_r\beta^2(z_r^\tp\bar v)^2g_rg_r^\tp,\quad
 \bar D=\sum_r\bar a_r\beta(z_r^\tp\bar v)^2
                         \operatorname{diag}(z_r\circ(-f'')),
 \quad \bar B_1=\sum_r\bar a_r\beta(z_r^\tp\bar v)z_rg_r^\tp.
\]
Add the exact equations $(V_K)_F^\tp\bar S\bar B_1h=0$ and use the proxy
$\bar Q$ of~\eqref{eq:genproxy}.

The diagonal inequality $\bar U_{jj}\le\rho\bar D_{jj}$ is exact, and
for the analysis the normalized computed Gram factors satisfy
$C'C'^\tp=\bar K\preceq N_K+\delta I$. Lemma~\ref{lem:newjoint} therefore
applies directly to the computed proxy and computed equations, which
avoids any perturbation theorem for the discontinuous spectral
cutoff.

\subsection{Transferring computed curvature to the true matrix}
The derivative of the scalar inverse square root and $B_0\le7/8$ give
$\|\bar S-S_L\|\le16\delta$, and $\|\bar S\|,\|S_L\|<3$. Hence
$\|K_L-N_K\|\le256\delta$ and $|k_*-1|\le256\delta$, the computed top
column $w_K$ satisfies $\|w_K-\tilde v\|\le2048\delta/\eta$, the true normalized
gap is at least $\eta/2$, and the reconstructed gap is at least
$\eta/4$.

For exactly feasible $h$, the high normalized forcing obeys
\[
 \|P_{(3/4,1]}(K_L)S_LM'[h]v\|,
 \ |L'[h]|
 \le 2^{30}\delta\|h\|/\eta.
\]
Indeed, $\|M'[h]v-\bar B_1h\|\le16\delta\|h\|/\eta$, the true high and
computed low normalized spectral blocks are separated by more than
$1/50$, and their overlap is at most $2^{14}\delta$. The top normalized
component also controls $L'[h]$, since $\tilde v^\tp S_LM'[h]v=L'[h]/t_0$.

For the curvature comparison we use the globally invertible matrices
\[
 A_K=I-K_L+\tilde v\tilde v^\tp,\qquad
 \widehat A_K=k_*I-N_K+w_Kw_K^\tp.
\]
The norms of their inverses are at most $2/\eta$ and $4/\eta$,
respectively, and their difference is at most $2^{13}\delta/\eta$, so
the inverse identity bounds the difference of their reduced resolvents
by $2^{17}\delta/\eta^3$.

To evaluate the true response in \eqref{eq:genresponse}, subtract
$L'[h]v$ from $M'[h]v$; by the previous high-forcing estimate, this
changes the normalized forcing by at most $2^{34}\delta\|h\|/\eta$. The
computed forcing lies exactly in the computed low space, so its
reconstructed full reduced response is exactly $R_K$. Adding the direct
$\mathcal U-\mathcal D$ perturbation to the bounds on both forcings
gives the contract
\begin{equation}\label{eq:genproxyerror}
 |L''[h,h]-h^\tp\bar Qh|
 \le2^{40}\delta\|h\|^2/\eta^3.
\end{equation}
With the chosen $\delta=\eta^5/2^{100}$ this is at most
$\eta^2\|h\|^2/2^{60}$, within the curvature allowance in
Section~\ref{sec:directions}. The same precision also keeps the trace and
eigenvalue normalization errors in the joint count, including
$b_t\eta/(L-B_0)$, below $1/10000$ times $s$.

For the chosen parameters, $\beta\|g\|\le25/8$ and
$\beta\|f''\|_\infty d_0^2\le9/4$, the normalized Gram norm is less than
$1+256\delta$, and $\|R_K\|<4$; hence $\|\bar Q\|<128$. This norm bound
lets Section~\ref{sec:directions} obtain the exact feasible negative
basis from $P(\bar Q-\eta I)P+(I-P)$ with diagonalization accuracy
$\eta/2^{24}$.

\subsection{The allowance in the dimension count}
The preceding accuracy bounds give $\|N_K-K_L\|\le256\delta$,
$|k_*-1|\le256\delta$, and $C'C'^\tp\preceq N_K+\delta I$. Since
$\operatorname{tr}K_L\le s$,
\[
 \frac{\operatorname{tr}N_K}{k_*}
 \le\operatorname{tr}K_L+1024\delta s.
\]
For the relevant tangents $a_t,b_t\le2$, the resulting error in the
joint spectral count is at most $2^{12}\delta s$, including the
perturbation $2a_t\delta s/k_*$ of the reciprocal line. The positive
ridge remainder is at most $b_t\eta s/(\ell_0-B_0)\le18\eta s$ when
$B_0\le7/8$, so the fully explicit additive allowance is
\[
 (18\eta+2^{12}\delta)s<\frac{s}{10000},
 \qquad \eta\le2^{-40},\quad\delta=\eta^5/2^{100}.
\]
The remaining costs, the two exact tangencies and the high row-Gram
equations, are included in Section~\ref{sec:dimension}.

\subsection{Profile derivatives and the flat tests}
The flat tests use the vectors $e_j$ and $(1-c_0)g_{i\sigma}$, whose
Euclidean norms are at most one. For $h=h_0/16$, the dyadic test based
on $9\kappa/k$ gives $16\|h\|_\infty\le K$ and $|\beta g_{i\sigma}^{\tp}h|\le K$,
provided $\beta/(1-c_0)\le16$, and the stronger checked bound
$\beta/(1-c_0)\le25/8$ controls both the numerical forcing and the
estimates with frozen cold terms. In particular every cold row satisfies
$|\ell'(0)|\le25/128<1/4$, which, combined with the coordinate flatness
and the second-derivative bound, gives $|\ell(t)|<|t|/4$ on the
prescribed step interval. The cold threshold uses the integer $\ell_c$
from \eqref{eq:screeningprecision}.

The finite-step and proxy certificates use the rational inequalities
\begin{equation}\label{eq:profilecontracts}
 \frac{\beta c_0}{256d_0(1-c_0)^2}<\frac12,\qquad
 \frac{2\beta c_0^2}{4096d_0(1-c_0)^3}<\frac14,\qquad
 \frac{\beta c_0d_0}{(1-c_0)^2}\le\frac94,
 \qquad \frac\beta{1-c_0}\le\frac{25}{8}.
\end{equation}
All four are checked exactly at the rational parameters.

\section{The finite Perron estimate}\label{app:perron}
The step schedule needs a bound along a whole segment, not only a
curvature bound at its anchor. This appendix proves
Lemma~\ref{lem:generalizedperron} of Section~\ref{sec:steps}, which
supplies the required remainder for a positive Gram curve. The proof
first controls the reduced Perron response, then separates the second
eigenvalue, and finally estimates the top eigenvalue using a trial
vector and its Rayleigh residual.

\begin{proof}[Proof of Lemma~\ref{lem:generalizedperron}]
Write $E=M'(0)$ and $a=v^{\tp}Ev$. We first bound the reduced response of
the Perron vector. The positive Gram representation gives the entrywise
estimate $|E|\le K(M-B)$, which is stronger than a norm bound, and
therefore
\[
 |S_LEv|\le K D_L^{1/2}v=Kt_0\tilde v.
\]
Here $\sqrt{\ell_0-7/8}\le t_0\le\sqrt{9/8}<1.061$, $\|S_L\|<3$, and
$\min_i\tilde v_i\ge\nu/4$. Relative to $\tilde v_i$, the terms of the $K_L$
low-response series are bounded both by $(33/32)Kt_0$ and by
$Kt_0(3/4)^j/\min_i\tilde v_i$; summing the smaller of the two estimates bounds
this low response componentwise by $18Kt_0k_v\tilde v_i$, and its norm by
$4Kt_0$. The high response has norm at most $2e/\zeta$ and componentwise
relative size at most $8e/(\zeta\nu)$.

The exact reduced response $z=(LI-M)^\dagger Ev$ is obtained by applying
$S_L(I-K_L)^\dagger$ to $S_L(Ev-av)$, and then subtracting the multiple
of $v$ that makes $z\perp v$.
Since $|a|\le t_0 e$, its correction has norm at most
$6t_0e/\zeta$ before the final multiplication by $S_L$.
Their componentwise bounds after multiplication by $S_L$, relative to
$v_i$, total at most $64e/(\zeta\nu)\le K/1024<K/100$; their norm
bounds are no larger. The displayed smallness assumption therefore
controls both kinds of error. The transformation $S_L\tilde v=v/t_0$ preserves the
componentwise estimate relative to $v$, and the final gauge subtraction
adds at most the norm of the unprojected vector. It follows that
\[
 \|z\|\le16K,\qquad |z_i|\le64Kk_vv_i,
 \qquad |a|\le K/64.
\]

Next we separate the second eigenvalue, using the response estimate in a
direct expansion of the original matrix $M(t)$ with the normalization
held fixed at the anchor. Put $A=LI-M(0)$,
$Z=\operatorname{diag}(z_i/v_i)$, and $D=I+tZ$. Then
\[
 D((L+at)I-M(t))D=A+tG+R(t),\qquad
 G=ZA+AZ-E+aI,\qquad Gv=0.
\]
The positive Gram curve bounds give $\|Ev\|\le LK\le9K/8$,
$\|E\|\le2K$, $\|M''(t)\|\le6K^2$, and $\|M'''(t)\|\le14K^3$ almost
everywhere, and expansion yields $\|R(t)\|\le2^{14}K^2k_v^2t^2$: the
quadratic $ZAZ$ term is at most $8192K^2k_v^2$, the two linear cross
terms total less than $259K^2k_v^2$, and the Taylor and cubic terms
contribute less than $16K^2k_v^2$. Off the diagonal
$|G_{ij}|\le129Kk_vM_{ij}$, so the weighted Laplacian identity gives
$|y^{\tp}Gy|\le129Kk_vy^{\tp}Ay$. For $u_0\perp D^{-1}v$, set
$y=D^{-1}u_0\perp v$; since the step restriction implies
$\|D-I\|\le1/64$, the complementary quadratic form is bounded below by
\[
 \frac{1-129/4096-1/1024}{(1+1/64)^2}\zeta\|u_0\|^2
 >\tfrac12\zeta\|u_0\|^2.
\]
Thus $\lambda_2(M(t))\le L+at-\zeta/2$.

Finally we estimate the top eigenvalue with the positive trial vector
$\psi=v+tz$, which has norm at least one. Its residual centered at
$L+at$ is $t^2(E-aI)z+(M(t)-M(0)-tE)\psi$, with normalized norm at most
$128K^2t^2$, so its Rayleigh quotient obeys $|r-L-at|\le128K^2t^2$ and
$r-\lambda_2(M(t))>\zeta/3$, and the Rayleigh residual inequality gives
$0\le\lambda_1(M(t))-r\le2^{16}K^4t^4/\zeta$.

In the Taylor expansion of the Rayleigh quotient, $A_2=L''(0)/2$ has
magnitude at most $21K^2$, the numerator's cubic coefficient is at most
$608K^3$, and its quartic coefficient is at most $768K^4$. Division by
$1+t^2\|z\|^2$ changes these by at most $4K^3$ and $5376K^4$,
respectively. Using $K|t|\le1/4096$ and the $14K^3$ third-derivative
bound therefore gives
\[
 |r-L-at-\tfrac12L''(0)t^2|\le1024K^3|t|^3.
\]
Composition with $(u-1)_+^3$, whose second derivative is 6-Lipschitz,
costs less than $2^{16}K^3|t|^3$, and since all scalar arguments are
below two, replacing $r$ by the top eigenvalue costs at most
$3\cdot2^{16}K^4t^4/\zeta<2^{20}K^4t^4/\zeta$.
\end{proof}

\section{Finite arithmetic primitives}\label{app:arithmetic}\label{sec:arithmetic}

The construction calls for scalar approximations, spectral approximations,
and a direction with small inner products against a prescribed test list.
We give finite field-operation procedures for all three, so none of the
operations in the main algorithm requires an exact transcendental or
spectral oracle.

\subsection{Profile and scalar evaluations}
The row constraints use $f'$ and the proxy uses $f''$, both rational in
rational $x$, so only the weights and the decisions about protected
pairs require values of $f$. For $z\in[1,3]$, let
$t=(z-1)/(z+1)\in[0,1/2]$. Then
\[
 \log z=2\sum_{k=0}^{N-1}\frac{t^{2k+1}}{2k+1}+R_N,
 \qquad 0\le R_N\le\frac{2t^{2N+1}}{(2N+1)(1-t^2)}.
\]
Thus $O(\log(1/\xi))$ rational operations give any absolute accuracy
$\xi$. We compute the profile at each active coordinate and then form
the rounding energies $E_i$; since $\sum_jz_{ij}\le1$, profile error is not
magnified in $E_i/r_i^2$, and the other term $(\sigma d_i-H_0)/r_i$ is
computed exactly. Positive weight approximants are computed with the
total error required in Section~\ref{sec:directions}. Very negative
exponentials can be set to zero after certifying that their weight is
below the allocated error; for the rest, argument reduction and a
positive Taylor series suffice. The required precisions have logarithms
$O(\log(2+m_0+n))=O(\log(2+n))$, and no logarithm or exponential oracle
is assumed. Tiny scales cause no loss of absolute weight accuracy,
because the matrix is formed with $a_{i\sigma}z_iz_i^\tp$, not with
unnormalized $r_i^{-4}$ coefficients.

\subsection{An explicit field-operation diagonalization interface}
For a symmetric matrix of bounded norm, we maintain an exactly
orthogonal product of rational plane rotations. In a pivot block with
diagonal $a,c$ and off-diagonal $b$, use $\cos t=(1-z^2)/(1+z^2)$ and
$\sin t=2z/(1+z^2)$; the rotated off-diagonal numerator is then
$b(1-6z^2+z^4)+2z(1-z^2)(c-a)$, and it has a zero between $0$ and
$-\tfrac12\operatorname{sgn}(b(c-a))$ (if $a=c$, use the endpoint
$1/2$). Bisection reduces the pivot magnitude by at least a factor of
two, and choosing a largest pivot reduces the squared off-diagonal
Frobenius norm $E$ by at least $3E/(2s(s-1))$, so after
$O(s^2\log(s/\xi))$ rotations the off-diagonal norm is below $\xi$. A
tournament structure updates the largest pivot in $O(s\log s)$ work per
rotation, and while termination is not yet reached, the pivot is at
least an inverse-polynomial multiple of $\xi$, so each bisection takes
$O(\log(s/\xi))$ iterations. This proves \eqref{eq:interface} in
$\widetilde O(s^3)$ arithmetic operations; after scaling its bounded
norm, the same procedure also handles $Q_{\rm neg}$.

The exact projector $P$ of Section~\ref{sec:directions} is formed by
extracting independent constraint rows with exact elimination;
elimination and projection cost $O(s^3)$, and the model allows
arbitrarily small nonzero pivots and makes no conditioning assumption.

\subsection{Finite deterministic flattening}
We implement the flattening step by fixing signs successively in a
conditional-expectation bound. For unit-norm test vectors
$a_1,\ldots,a_N$, write $c_{ij}=a_i^\tp T e_j$; then
$\sum_jc_{ij}^2\le1$.
Let $\lambda$ be the least positive integer with $\lambda^2\ge\kappa$.
For uniform independent signs $\eps_j$,
\[
 \mathbb E\sum_i\cosh\!\left(\lambda\sum_jc_{ij}\eps_j\right)
 \le N e^{\lambda^2/2}.
\]

We fix the signs greedily, one at a time, using positive relative
approximations to the two conditional expectations. Scalar relative
errors of at most $1/(100k^2)$ make each conditional expectation
accurate within $1/(20k)$ and keep the accumulated multiplicative loss
below two, so the resulting signs have $|\sum_jc_{ij}\eps_j|\le3\sqrt\kappa$
for every $i$, since $\log(4N)+\lambda^2/2\le3\kappa$ and $\lambda\ge\sqrt\kappa$.
Choose by bisection a positive dyadic rational $r_0$ with
$3/4\le r_0^2k\le1$, and put $h_0=r_0T\eps$; the Gram bounds on $T$ then
give $3/4\le\|h_0\|\le1$ and the claimed test bounds.

Each conditional expectation is a positive sum of products of
$\cosh(\lambda c_{ij})$ and a current partial-sum factor, so suffix
products and partial sums reduce the work to $O(Nk)$ scalar
evaluations. Their arguments have size at most $2\sqrt{\kappa k}$, and
halving, Taylor approximation, and repeated squaring implement each
relative approximant in $O(\log(2Nk))$ operations.

\section{Experiments}\label{app:experiments}
This appendix records the measurements summarized in
Section~\ref{sec:practical}: the matched suite and the outcome of
Algorithm~\ref{alg:practical} on it, the verification of the code, and a
comparison with the earlier walks on the same suite.

\subsection{Measured execution}
The matched suite has 48 cases: six matrix families, two seeds, and
$(n,m)\in\{(640,80),(640,640),(640,2560),(1024,1024)\}$.
Every method receives the same matrices and seeds, and the execution
order rotates. Times include construction, terminal work, and output
verification, but not input generation, and each BLAS runtime uses one
thread.

All 48 executions of Algorithm~\ref{alg:practical} returned at the proposal
stage, with median time $0.0077$ seconds and discrepancy between $0.257$
and $11.006$. The median paired speedup over our earlier quadratic walk with
Gram--Schmidt-walk directions and its original finish, the first row of
Table~\ref{tab:history}, was about 630, but the discrepancy of our routine was
lower in only five cases, equal in four, and higher in 39. The speed
advantage therefore concerns the time to reach the specified $37.54$
target, not the minimum observed discrepancy; Table~\ref{tab:history}
records the complete comparison.

Eighteen further proposal runs at $n=2048$ and $4096$, on square and
wide matrices, all passed without fallback, taking $0.008$--$0.231$
seconds with discrepancy at most $15.431$. Because they have no matched
timing for the old walk, they are excluded from the speed ratio. Across
all seven variants and the larger checks, 354 executions passed the
strict interval certificate. These single-run timings prove neither an
asymptotic exponent nor an acceptance guarantee outside the tested
cases.

\subsection{Verification and reproducibility}
The repository~\repourl{} contains the code, the checksummed
proof-certificate package, and the raw records. Six certificate checkers
were rerun:
exact scalar and budget certificates, protected-charge and randomized
row-processing checks, and response audits covering 300 inertia examples,
480 normalized-response identities, and 464 finite curves.
The current implementation has 16 unit tests, covering terminal
decisions, intervals checked against exact dyadic sums, cleanup,
filtering, the normalized proxy, and explicit fallback and failure
paths. A test on a supplied state exercises the high branch, but every
matched trajectory stayed in the low branch, so the timings do not
measure the new high-branch response.

Raw records, source hashes, timing metadata, and the generated comparison
are in the repository. The largest recorded
tracking residual was $1.74\cdot10^{-14}$, and the largest large-row
residual was $9.75\cdot10^{-12}$. These trajectory residuals are only
numerical diagnostics; the output certificates come from the independent
final interval checks. The exact finite construction has not been
implemented as an exact solver, and its correctness and cost are the
mathematical claims proved above.

\subsection{Earlier walks on the same suite}
Numerical endpoint moves avoid the extremely small proved step schedule,
but their gains depend on successful endpoint tests. Stopping at 336
coordinates also saves time, but the quadratic terminal objective does
not minimize the maximum row sum: for an orthonormal remaining matrix at
$x=0$, every signing has the same terminal objective, so positive tie
breaking on a normalized Hadamard matrix can produce a large row maximum.

Table~\ref{tab:history} compares seven implementations on the matched
suite and charges all terminal work. CE336 denotes the $19/6$ weighted
terminal rule of Section~\ref{sec:terminal}. GSW denotes directions of
Gram--Schmidt-walk type~\citep{bdgl2019gsw} computed inside our
constrained walk, namely the minimum-norm move in the constraint kernel
with a small ridge term, with the step limits, cleanup, and certificates
of the walk unchanged; it is not the published Gram--Schmidt walk
itself. ``Quadratic'' and ``below 67'' denote two earlier
versions of the construction, with a quadratic coordinate profile and a
proved bound below $135$, and with a nonquadratic profile and a proved
bound below $67$; the original GSW configuration keeps its original
finish, whereas the final GSW variant uses CE336. The ratios are medians
of paired elapsed ratios, not ratios of median times.
\begin{table}[htbp]
\caption{Seven implementations on the matched suite of 48 cases. Every
output passed the strict $37.54$ interval check.}
\label{tab:history}
\centering\small
\begin{tabular}{lrrr}
\toprule
Configuration & Time/GSW & Discrepancy range & Walk moves\\
\midrule
Quadratic, original GSW finish & 1.0000 & $[0.000,3.438]$ & 512--925\\
Quadratic, CE336 & 0.4012 & $[0.415,4.384]$ & 304--688\\
Below 67, CE336 & 0.4709 & $[0.415,4.384]$ & 304--688\\
Final 37.54, no input filter & 0.6029 & $[0.316,4.250]$ & 304--690\\
Final 37.54, input filter & 0.0022 & $[0.593,32.000]$ & 0--688\\
Final 37.54 + GSW, CE336 & 0.8750 & $[0.059,10.375]$ & 289--688\\
Eight proposals + interval check & 0.0016 & $[0.257,11.006]$ & 0\\
\bottomrule
\end{tabular}
\end{table}

With CE336 matched, the unfiltered walk with the final profile had
median paired time ratios $1.52$ to the quadratic walk and $1.11$ to the
below-67 walk, so the stronger theorem did not yield the fastest
numerical walk. Its discrepancy was lower, equal, or higher in
$19/15/14$ and $20/14/14$ cases, respectively. Input filtering removed all walk
moves in 36 cases, but completion with all positive signs then reached
discrepancy 32. The quadratic walk with GSW directions and its original
finish never exceeded $3.438$ on the
matched suite, compared with $11.006$ for the selected proposal routine,
and its implementation and data remain available for applications that
value smaller discrepancy over execution time.

\bibliographystyle{alpha}
\bibliography{komlos,komlos-spectral,komlos-related}
\end{document}